\newcommand{\PSp}{\mathop{\mathrm{PSp}}}
\newcommand{\PG}{\mathop{\mathrm{PG}}}
\newcommand{\AGL}{\mathop{\mathrm{AGL}}}
\newcommand{\ASL}{\mathop{\mathrm{ASL}}}
\newcommand{\Sp}{\mathop{\mathrm{Sp}}}
\newcommand{\SO}{\mathop{\mathrm{SO}}}
\newcommand{\SL}{\mathop{\mathrm{SL}}}
\newcommand{\PGL}{\mathop{\mathrm{PGL}}}
\newcommand{\PSU}{\mathop{\mathrm{PSU}}}
\newcommand{\PGO}{\mathop{\mathrm{PGO}}}
\newcommand{\GO}{\mathop{\mathrm{GO}}}
\newcommand{\PSL}{\mathrm{\mathrm{PSL}}}
\newcommand{\GU}{\mathop{\mathrm{GU}}}
\newcommand{\GL}{\mathop{\mathrm{GL}}}
\newcommand{\GG}{\mathop{\mathrm{G}}}
\newcommand{\FF}{\mathop{\mathrm{F}}}
\newcommand{\EE}{\mathop{\mathrm{E}}}
\newcommand{\CC}{\mathop{\mathrm{C}}}
\renewcommand{\AA}{\mathop{\mathrm{A}}}
\newcommand{\BB}{\mathop{\mathrm{B}}}
\newcommand{\DD}{\mathop{\mathrm{D}}}
\newcommand{\CSp}{\mathop{\mathrm{CSp}}}
\newcommand{\PCSp}{\mathop{\mathrm{PCSp}}}
\newcommand{\POmega}{\mathop{\mathrm{P}\Omega}}
\newcommand{\PGU}{\mathop{\mathrm{PGU}}}
\newcommand{\Aut}{\mathop{\mathrm{Aut}}}
\newcommand{\Out}{\mathop{\mathrm{Out}}}
\newcommand{\lcm}{\mathop{\mathrm{lcm}}}
\newcommand{\Alt}{\mathop{\mathrm{Alt}}}
\newcommand{\soc}{\mathop{\mathrm{soc}}}
\newcommand{\Sym}{\mathop{\mathrm{Sym}}}
\newcommand{\meo}{\mathop{\mathrm{meo}}}
\newcommand{\smax}{s_{\mathrm{max}}}
\newcommand{\umax}{u_{\mathrm{max}}}
\renewcommand{\wr}{\mathop{\mathrm{wr}}}
\newtheorem{theorem}{Theorem}[section]
\def\cent#1#2{{\bf C}_{#1}(#2)}
\def\Z#1{{\bf Z}(#1)}
\def\norm#1#2{{\bf N}_{#1}(#2)}
\newtheorem{corollary}[theorem]{Corollary}
\newtheorem{proposition}[theorem]{Proposition}
\newtheorem{lemma}[theorem]{Lemma}
\newtheorem{remark}[theorem]{Remark}
\newtheorem{notation}[theorem]{Notation}
\begin{document}

\title[Maximal element order]{On the maximum orders of elements of finite almost simple groups and primitive permutation groups}  

\author[S.~Guest]{Simon Guest}
\address{Simon Guest, Centre for Mathematics of Symmetry and Computation,
School of Mathematics and Statistics,
The University of Western Australia,
 Crawley, WA 6009, Australia\newline
 Current address: Mathematics, University of Southampton, Highfield, SO17 1BJ, United Kingdom}\email{s.d.guest@soton.ac.uk}

\author[J.~Morris]{Joy Morris} 
\address{Joy Morris, Department of Mathematics and Computer Science,
University of Lethbridge,
Lethbridge, AB. T1K 3M4. Canada}
\email{joy@cs.uleth.ca}

\author[C.~E.~Praeger]{Cheryl E. Praeger}
\address{Cheryl E. Praeger, Centre for Mathematics of Symmetry and Computation,
School of Mathematics and Statistics,
The University of Western Australia,
 Crawley, WA 6009, Australia\newline Also affiliated with King Abdulazziz 
University, Jeddah, Saudi Arabia}\email{Cheryl.Praeger@uwa.edu.au}

\author[P.~Spiga]{Pablo Spiga}
\address{Pablo Spiga,
Dipartimento di Matematica e Applicazioni, University of Milano-Bicocca,\newline
Via Cozzi 53, 20125 Milano, Italy}\email{pablo.spiga@unimib.it}

\thanks{Address correspondence to P. Spiga,
E-mail: pablo.spiga@unimib.it\\ 
The second author is supported in part by the National Science
  and Engineering Research Council of Canada.
The research is supported in part by the Australian Research Council grants  
FF0776186, and DP130100106.}

\subjclass[2000]{20B15, 20H30}
\keywords{primitive permutation groups; conjugacy classes; cycle structure} 

\begin{abstract}
We determine upper bounds for the maximum order of an 
element of a finite almost simple group with socle $T$ in terms of the 
minimum index $m(T)$ of a maximal subgroup of $T$: 
for $T$ not an alternating group we prove that, with finitely many exceptions, 
the maximum element order is at most $m(T)$. Moreover, apart from an explicit list 
of groups,
the bound can be reduced to $m(T)/4$. These results are applied to determine all 
primitive permutation groups on a set of size $n$ that contain permutations of order greater than or equal to $n/4$. 
\end{abstract}
\maketitle

\section{Introduction}\label{introduction}

In 1903, Edmund Landau~\cite{La1,La2} proved that the maximum order of an element of the 
symmetric group $\Sym(n)$ or alternating group $\Alt(n)$ of degree $n$ is $e^{(1+o(1))(n\log n)^{1/2}}$, 
though it is now known from work of 
Erd\"os and Turan~\cite{ET1,ET2} that most elements have
far smaller orders, namely at most $n^{(1/2+o(1))\log n}$ (see also~\cite{BT, BLNPS}).  
Both of these bounds compare the element orders with the parameter $n$, which is the least degree of a 
faithful permutation representation of $\Sym(n)$ or $\Alt(n)$.
Here we investigate this problem for all finite almost simple groups: 
\begin{center}
\emph{Find upper bounds for the maximum
element order of an almost simple group with socle $T$ in terms of the \emph{minimum degree} $m(T)$
of a faithful permutation representation of $T$.    }
\end{center}
We discover that the alternating and symmetric groups are 
exceptional with regard to this element order comparison. 
We also study maximal element orders for many natural classes of subgroups of $\Sym(n)$, in particular for many 
families of primitive subgroups. 
Our most general result for almost simple groups is Theorem~\ref{playultimate}. 
For a group $G$ we denote by $\meo(G)$ the maximum order of an element of $G$. 
We note that the value of $\meo(T)$ for 
$T$ a simple classical group of odd characteristic was determined in~\cite{KS} and its relation to $m(T)$ can be deduced.
If $G$ is almost simple, say $T\leq G\leq {\rm Aut}(T)$ with its socle $T$ a non-abelian simple group, then
naturally $\meo(G)\leq\meo({\rm Aut}(T))$.

\begin{theorem}\label{playultimate}
Let $G$ be a finite almost simple group with socle $T$, such that $T\ne \Alt(m)$ for any
$m\geq5$. Then with finitely many exceptions, $\meo(G)\leq m(T)$; and indeed either $T=\PSL_d(q)$ for some $d,q$, or
$\meo(G)\leq m(T)^{3/4}$. Moreover, given 
positive $\epsilon,A >0$, there exists $Q=Q(\epsilon, A)$ such that, if $T=\PSU_4(q)$ with $q>Q$, 
then $\meo(G)> A\,m(T)^{3/4-\epsilon}$.
\end{theorem}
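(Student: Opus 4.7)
The plan is to handle the theorem by case analysis on the isomorphism type of $T$. Since $T=\Alt(m)$ is excluded and the sporadic simple groups form a finite list checkable from the ATLAS, the substantive content concerns $T$ of Lie type. For the exceptional Lie types I would argue uniformly: $m(T)$ grows polynomially in $q$ with high exponent (at least the dimension of the smallest flag variety), while $\meo(T)$ is bounded by a small multiple of $q+1$ times the Coxeter number. This already yields $\meo(G)\le m(T)^{3/4}$ with huge room to spare once $q$ is large, leaving only finitely many small $q$ to verify by direct computation. The real work lies with the classical families $T\in\{\PSL_d(q),\PSU_d(q),\PSp_{2d}(q),\POmega_n^\pm(q)\}$.

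For classical $T$, I would start from the known formulas for $m(T)$, given apart from small exceptions by the index of a parabolic subgroup in the natural action: a closed polynomial in $q$, e.g.\ $(q^{2d}-1)/(q-1)$ for $\PSp_{2d}(q)$ and $(q+1)(q^3+1)$ for $\PSU_4(q)$. For $\meo(T)$ I would use the Jordan decomposition $g=g_sg_u$: the semisimple part has order dividing the exponent of some maximal torus (a product of cyclotomic factors $q^i\pm1$), and the unipotent part has $p$-power order bounded by the rank. In odd characteristic the exact values of $\meo(T)$ are already tabulated in \cite{KS}; the characteristic $2$ case is handled by analogous torus considerations. Comparing these polynomial expressions in $q$ shows that for $T\ne\PSL_d(q)$ the largest element order is polynomially smaller than $m(T)^{3/4}$ once $q$ is large, with a finite list of small $(T,q)$ to verify explicitly. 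The case $\PSL_d(q)$ is genuinely exceptional: a Singer cycle provides an element of order $(q^d-1)/((q-1)\gcd(q-1,d))$, comparable to $m(T)=(q^d-1)/(q-1)$ itself, so the $3/4$ exponent cannot be attained. The outer-automorphism overhead $\meo(G)/\meo(T)\le|\Out(T)|$ is polylogarithmic in $q$ and is absorbed into the finite exception list.

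For the $\PSU_4(q)$ lower bound, note that $m(T)=(q+1)(q^3+1)\sim q^4$, so $m(T)^{3/4-\epsilon}\sim q^{3-4\epsilon}$, while the Coxeter torus of $\PSU_4(q)$ contains cyclic subgroups of order $(q^3+1)/\gcd(q+1,4)\sim q^3$. Hence $\meo(T)/m(T)^{3/4-\epsilon}\gg q^{4\epsilon}$, which exceeds any fixed $A$ once $q>Q(\epsilon,A)$; since $T\le G$, the same bound holds for $\meo(G)$. The main obstacle throughout is the classical bookkeeping: tracking unipotent orders and outer-automorphism contributions across every classical family, and pinning down the finite exceptional list precisely in low characteristic and low rank, where exceptional isomorphisms and unusually large torus orders can push $\meo(T)$ above the generic bound. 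By contrast, the $\PSU_4(q)$ sharpness claim is a short explicit calculation once the minimum degree and the Coxeter torus order are in hand.
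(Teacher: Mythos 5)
Your overall strategy for the second assertion and for the $\PSU_4(q)$ sharpness claim coincides with the paper's: compare the polynomial formulas for $m(T)$ with upper bounds for $\meo(\Aut(T))$ obtained from the Jordan decomposition and the maximal-torus structure (the paper assembles exactly these bounds in Table~\ref{newtable} and compares with Table~\ref{TableMDR}), and exhibit the order-$q^3$ torus element of $\PSU_4(q)$ against $m(T)\sim q^4$. Your version of the lower bound, using an element of $T$ itself rather than of $\Aut(T)$, is if anything slightly cleaner. One quantitative slip: for exceptional types $\meo(T)$ is \emph{not} bounded by a small multiple of $(q+1)$ times the Coxeter number --- already $\GG_2(q)$ has elements of order $q^2+q+1$, and $\EE_8(q)$ has semisimple elements of order about $q^8$. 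The correct statement is that $\meo(T)$ is $O(q^{\ell+1})$ with $\ell$ the rank, which still leaves an enormous margin against $m(T)^{3/4}$, so the conclusion survives; but the bound as you state it is false.

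The genuine gap is in the first assertion for $T=\PSL_d(q)$. You propose to absorb the outer-automorphism overhead via $\meo(G)\le|\Out(T)|\,\meo(T)$ into a finite exception list, but for $\PSL_d(q)$ with $d\ge 3$ the image of a Singer cycle already gives $\meo(T)\ge (q^d-1)/((q-1)\gcd(d,q-1))$ while $|\Out(T)|=2f\gcd(d,q-1)$, so $|\Out(T)|\,\meo(T)\ge 2f\,m(T)>m(T)$ for \emph{every} $(d,q)$; the crude product bound therefore fails on an infinite family and cannot establish $\meo(G)\le m(T)$ there. What is actually needed --- and what occupies Theorem~\ref{cor:PGammaL} of the paper --- is a genuine analysis of the cosets of outer automorphisms: for an element $g=x\psi^{-1}$ with $\psi$ a field automorphism of order $e$, Lang's theorem shows $g^e$ is conjugate into the subfield group $\PGL_d(q^{1/e})$, giving $|g|\le e\,(q^{d/e}-1)/(q^{1/e}-1)\ll (q^d-1)/(q-1)$; and for the graph (inverse-transpose) coset one needs the Fulman--Guralnick description of the elements $xx^{-tr}$ to bound $|g|$ via symplectic groups. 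Without some such descent argument, the claim $\meo(G)\le m(T)$ for almost simple groups with socle $\PSL_d(q)$ not contained in $\PGL_d(q)$ is unproved. (A milder form of the same issue arises for the other classical families in cosets of field automorphisms, but there the polynomial gap between $\meo(T)$ and $m(T)^{3/4}$ does let you absorb the logarithmic factor $|\Out(T)|$ into a finite exception list, as you suggest.)
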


We note again that this result gives upper bounds for $\meo({\rm Aut}(T))$ in terms of $m(T)$,
and for $\meo(G)$ in terms of $m(G)$ (since $m(T)\leq m(G)$).   
Moreover 
equality in the upper bound $\meo({\rm Aut}(T))\leq m(T)$ holds when $T=\PSL_d(q)$ for all but two pairs $(d, q)$,
see Table~\ref{newtable} and Theorem~\ref{cor:PGammaL}. (Theorem~\ref{cor:PGammaL} and 
Table~\ref{newtable} provide good estimates for $\meo({\rm Aut}(T))$ for all finite classical simple groups $T$ in terms
of the field size and dimension.)
We are particularly interested in linear upper bounds for $\meo({\rm Aut}(T))$ of the form $c\, m(T)$  
with a constant $c<1$. It turns out that, after excluding the groups $\Alt(m)$ and $\PSL_d(q)$, such an upper 
bound holds with the constant $c=1/4$ for all but $12$ simple groups $T$.

\begin{theorem}\label{main2}
For a finite non-abelian simple group $T$, 
either $\meo(\Aut(T))< m(T)/4$, or $T$ is listed in Table~$\ref{exceptionsmain2}$. 
\end{theorem}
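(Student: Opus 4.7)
The plan is to leverage Theorem~\ref{playultimate} to reduce the question to a finite, explicitly bounded collection of simple groups, and then to verify the inequality $\meo(\Aut(T))<m(T)/4$ case by case, with separate treatment for the two families (alternating groups and $\PSL_d(q)$) deliberately excluded from the strong bound in Theorem~\ref{playultimate}.

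The main reduction is as follows. Suppose $T$ is neither an alternating group nor a projective special linear group. Theorem~\ref{playultimate} then yields $\meo(\Aut(T))\leq m(T)^{3/4}$ with at most finitely many exceptions. Since $m(T)^{3/4}<m(T)/4$ is equivalent to $m(T)>4^{4}=256$, the conclusion of Theorem~\ref{main2} is automatic for every such $T$ with $m(T)>256$. This reduces the problem to a finite list consisting of the exceptional cases appearing in Theorem~\ref{playultimate} together with every non-alternating, non-$\PSL_d(q)$ simple group of minimum degree at most $256$: the sporadic groups, the exceptional groups of Lie type of small rank and small field, and the classical groups $\PSU_d(q)$, $\PSp_{2n}(q)$, $\POmega^{\epsilon}_n(q)$ of small dimension or small $q$. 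For each such $T$, the values $\meo(\Aut(T))$ and $m(T)$ can be read off from the ATLAS, from Kleidman--Liebeck, and from~\cite{KS}, and the inequality is either verified or its failure recorded in Table~\ref{exceptionsmain2}.

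For $T=\Alt(m)$ with $m\geq 5$ one has $m(T)=m$ (with the familiar exception at $m=6$) and $\meo(\Aut(T))$ essentially equals Landau's function $g(m)$. The classical bound $g(m)=e^{(1+o(1))\sqrt{m\log m}}$ gives $g(m)<m/4$ for all $m$ beyond a modest explicit threshold, so only finitely many alternating groups need be inspected. For $T=\PSL_d(q)$ we appeal to Theorem~\ref{cor:PGammaL} and Table~\ref{newtable}, which provide explicit formulas (or sharp estimates) for $\meo(\Aut(T))$ and for $m(T)=(q^d-1)/(q-1)$ outside documented small exceptions; a direct numerical comparison then pins down the pairs $(d,q)$ for which $\meo(\Aut(T))\geq m(T)/4$.

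The principal obstacle is the $\PSL_d(q)$ family itself: Theorem~\ref{playultimate} excludes it precisely because the ratio $\meo(\Aut(T))/m(T)$ can be close to $1$ (indeed equal to $1$ for most $(d,q)$, by Theorem~\ref{cor:PGammaL}), so the bound $m(T)/4$ typically fails and the exceptional set must be determined by hand. Isolating those pairs for which the bound does hold requires careful comparison of $\meo(\Aut(T))$ — which depends on primitive prime divisors of $q^i-1$ for $i\leq d$, on field automorphism contributions, and on graph automorphisms when $d\geq 3$ — with $(q^d-1)/(4(q-1))$. A secondary, purely finite source of effort is the case analysis for the small classical, exceptional and sporadic groups with $m(T)\leq 256$, which is routine but laborious.
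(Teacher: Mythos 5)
Your overall strategy---compare $\meo(\Aut(T))$ with $m(T)$ using explicit bounds, treat $\Alt(m)$ and $\PSL_d(q)$ separately, and finish with a finite case check---is in the right spirit, but the specific reduction you build it on does not work. First, invoking Theorem~\ref{playultimate} to prove Theorem~\ref{main2} is circular in this paper: the proof of Theorem~\ref{playultimate} in Section~\ref{sec:playultimate} is obtained ``in the same way as in the proof of Theorem~\ref{main2}'', i.e.\ from the very comparison of Table~\ref{newtable} with Table~\ref{TableMDR} that you are trying to avoid. Even granting Theorem~\ref{playultimate} as a black box, its ``finitely many exceptions'' are not enumerated in its statement, so your promised case-by-case verification over ``the exceptional cases appearing in Theorem~\ref{playultimate}'' cannot actually be carried out. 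A related practical gap: for the exceptional groups of Lie type in even characteristic there is no formula for $\meo(T)$ to ``read off'' from~\cite{KS} (which covers odd characteristic only); the paper has to bound $|g|=|s||u|\leq|\smax||\umax|$ using the maximal-torus descriptions and Lawther's table of unipotent element orders, plus Suzuki's result for ${}^2\BB_2(q)$.

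Second, your treatment of the alternating groups contains an outright error. Landau's function satisfies $g(m)\geq m$ for $m\geq 3$ (an $m$-cycle), and indeed $g(m)=e^{(1+o(1))\sqrt{m\log m}}$ grows faster than any polynomial in $m$, so the inequality $g(m)<m/4$ \emph{never} holds; your claim that it holds ``for all $m$ beyond a modest explicit threshold'' has the asymptotics backwards. The correct (and much shorter) observation is that $\meo(\Aut(\Alt(m)))\geq m=m(T)>m(T)/4$ for every $m$, which is precisely why $\Alt(m)$ appears in Table~\ref{exceptionsmain2} as an entire family rather than as finitely many sporadic cases. The non-circular route, which is what the paper does, is: dispose of $\Alt(m)$ and the sporadic groups directly; for classical $T$ compare the explicit upper bounds of Table~\ref{newtable} (established in Section~\ref{prel} via Lemmas~\ref{cor:PSp},~\ref{cor:PGU}, Corollaries~\ref{cor:PGL},~\ref{cor:PO} and Theorem~\ref{cor:PGammaL}) against Table~\ref{TableMDR}; and for exceptional $T$ use~\cite{KS} in odd characteristic and the ad hoc semisimple--unipotent bounds in even characteristic.
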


\begin{table}[!h]
\begin{tabular}{|ll|l|l|l|l|}\hline
$M_{11}$&$M_{23}$&$\Alt(m)$&$\PSL_d(q)$&$\PSU_3(3)$&$\PSp_6(2)$\\
         $M_{12}$&$M_{24}$&&           &$\PSU_3(5)$&$\PSp_8(2)$\\
         $M_{22}$&$HS$&&           &$\PSU_4(3)$&$\PSp_4(3)$\\\hline
\end{tabular}
\caption{Exceptions in Theorem~\ref{main2}}\label{exceptionsmain2}
\end{table}

Clearly, Theorems~\ref{playultimate} and~\ref{main2} do not provide the last word on this type of result. One might wonder, if minded so, ``What is the slowest growing function of $m(T)$ with the property that Theorem~\ref{main2} is still valid?'' (possibly allowing a \emph{finite} extension of the list in Table~\ref{exceptionsmain2}). We do not investigate this here. Instead we turn our attention to $\meo(G)$ for a wider family of primitive permutation groups $G$ than the almost simple primitive groups. For such groups of degree $n$, it also turns out that $\meo(G)<n/4$, apart from a number of explicitly determined families and individual primitive groups. We refer to~\cite{GMPSaffine} for the affine case in which $G$ has an abelian socle, since the proof in that case is very delicate and quite different from the arguments in this paper, which are based on properties of finite simple groups.

\begin{theorem}\label{main}
Let $G$ be a finite primitive permutation group of degree $n$ such that $\meo(G)$ is at least $n/4$. 
Then the socle $N\cong T^\ell$ of $G$ is isomorphic to  one of the following (where $k, \ell\geq1$):
\begin{enumerate}
\item\label{1}$\Alt(m)^\ell$ in its natural action on $\ell$-tuples of $k$-subsets from
$\{1,\ldots,m\}$; 
\item\label{2} $\PSL_d(q)^\ell$ in either of its natural actions on $\ell$-tuples of points, or $\ell$-tuples of hyperplanes,
of the projective space $\PG_{d-1}(q)$; 
\item\label{3} an elementary abelian group  $C_p^\ell$ and $G$ is described in~\cite{GMPSaffine}; or to
\item\label{4} one of the groups in Table~$\ref{exceptions}$.
\end{enumerate}
Moreover,  there exists a positive integer $\ell_T$, depending only on $T$, such that  $\ell\leq \ell_T$. 
\end{theorem}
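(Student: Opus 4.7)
The plan is to apply the O'Nan--Scott theorem to split the analysis into the standard types of finite primitive permutation groups: affine (HA), almost simple (AS), product action (PA), simple and compound diagonal (SD, CD), twisted wreath (TW), and holomorph of compound (HC). The affine case is covered entirely by the companion paper \cite{GMPSaffine}, which yields item~(3). For each remaining O'Nan--Scott class, the aim is to show that either the hypothesis $\meo(G) \geq n/4$ forces $G$ into one of the configurations listed in items~(1), (2), or (4), or that no such $G$ exists.

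First I would dispose of the almost simple case. Here $n \geq m(T)$ because the socle $T$ is transitive in any primitive almost simple action, so $\meo(G) \geq n/4 \geq m(T)/4$. Theorem~\ref{main2} immediately restricts $T$ to $\Alt(m)$, $\PSL_d(q)$, or one of the twelve exceptions in Table~\ref{exceptionsmain2}. For $T = \Alt(m)$ I would run through the maximal subgroups of $\Sym(m)$ (intransitive, transitive imprimitive, primitive) and show that only the intransitive stabilisers of $k$-subsets can give $\meo(G) \geq n/4$, yielding the $\ell=1$ instance of item~(1). A similar but more delicate Aschbacher-class analysis for the maximal subgroups of $\PSL_d(q)$, using Table~\ref{newtable}, isolates the natural actions on points and hyperplanes of item~(2); the twelve small exceptional socles, together with any further particular actions of $\Alt(m)$ or $\PSL_d(q)$ that happen to satisfy the bound, are recorded in Table~\ref{exceptions} as item~(4).

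For the product action type, $G \leq H \wr \Sym(\ell)$ acts on $\Delta^\ell$ with $|\Delta| = m_0$, socle $T^\ell$, and $n = m_0^\ell$, where $H$ is almost simple primitive on $\Delta$ with socle $T$. A standard wreath-product computation shows that every element of $H \wr \Sym(\ell)$ has order at most $\meo(H) \cdot g(\ell)$, where $g(\ell) = \meo(\Sym(\ell)) = e^{(1+o(1))\sqrt{\ell \log \ell}}$ is Landau's function. If $\meo(G) \geq m_0^\ell/4$ and $\meo(H) \leq m_0$ (which by Theorem~\ref{playultimate} holds with finitely many exceptions), then
\[
m_0^{\ell-1} \;\leq\; 4\, g(\ell),
\]
which forces $\ell$ to be bounded by an explicit function of $m_0$, and hence by a constant $\ell_T$ depending only on $T$. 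Combining with the almost simple conclusion, $H$ must belong to the families identified in items~(1) and~(2), producing precisely the product-action subcases of (1)--(2) with $\ell \geq 2$; PA groups built on the finitely many small exceptions are collected into Table~\ref{exceptions}.

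The remaining O'Nan--Scott types SD, CD, TW, and HC are eliminated by direct estimates. In simple diagonal action the degree is $|T|^{\ell-1}$, while every element has order at most roughly $\meo(\Aut(T)) \cdot \ell$, which is vastly smaller than $|T|^{\ell-1}/4$ for $\ell \geq 2$ using any crude bound $\meo(\Aut(T)) \ll |T|^{1/2}$; this rules out SD except possibly for finitely many configurations absorbed into Table~\ref{exceptions}. The CD, TW, and HC types reduce to SD and PA via their standard wreath embeddings, and are then closed out by the same Landau-function argument. I expect the main technical obstacle to be the almost simple $\PSL_d(q)$ analysis: here $\meo(\Aut(T))$ is only marginally smaller than $m(T)$ (often equal to it, by Theorem~\ref{cor:PGammaL}), so ruling out every non-point and non-hyperplane action demands a delicate case-by-case treatment of the Aschbacher classes of maximal subgroups combined with sharp element-order comparisons from Theorem~\ref{cor:PGammaL} and Table~\ref{newtable}. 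Once these actions are eliminated, the classification and the uniform bound $\ell \leq \ell_T$ follow by amalgamating the almost simple and product-action analyses.
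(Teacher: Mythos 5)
Your overall architecture -- O'Nan--Scott reduction, Theorem~\ref{main2} for the AS case, wreath embeddings for SD/CD/TW/HC, and a Landau-function estimate to bound $\ell$ -- is the same as the paper's. However, there is one genuine error at the heart of your product-action analysis: the claim that every element of $H\wr\Sym(\ell)$ has order at most $\meo(H)\cdot\meo(\Sym(\ell))$ is false. An element $(h_1,\dots,h_\ell)\sigma$ has order $\lcm_i\bigl(c_i\,|g_i|\bigr)$ where the $g_i$ are the cycle products over the cycles of $\sigma$ of lengths $c_i$, and the $\lcm$ of several element orders of $H$ can far exceed $\meo(H)$. Concretely, for $H=\Sym(5)$ and $\ell=2$ the element $\bigl((a,b);1\bigr)$ with $|a|=5$, $|b|=6$ has order $30$, whereas your bound gives $6\cdot 2=12$. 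The correct general bound is $\meo(G)\leq \meo(H^\ell)\cdot\meo(\Sym(\ell))$, where $\meo(H^\ell)$ is an $\lcm$ of up to $\ell$ element orders, bounded by $\exp(H)$ but not by $\meo(H)$.

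This is not a harmless slip: your derived inequality $m_0^{\ell-1}\leq 4\,g(\ell)$ is violated by genuine examples in the theorem's conclusion. For $T=\Alt(7)$ with $m_0=7$ and $\ell=4$, the group $\Sym(7)\wr\Sym(4)$ in product action on $7^4=2401$ points contains an element of order $\lcm(2\cdot 12,\,7,\,10)=840>2401/4$, yet $7^3=343\not\leq 4\,g(4)=16$; so your necessary condition would wrongly exclude this case. The paper avoids the problem by splitting differently: either $\meo(H)\leq|\Delta|/4$, in which case $\meo(G)\leq\meo(H)^\ell\meo(\Sym(\ell))\leq(|\Delta|/4)^\ell\meo(\Sym(\ell))\leq|\Omega|/4$ because the factor $4^{-\ell}$ dominates $\meo(\Sym(\ell))$ (this is where the product bound $\meo(H^\ell)\leq\meo(H)^\ell$ is used, and it disposes of CD as well since SD-type $H$ always falls here by Theorem~\ref{thm:mainSD}); or $\meo(H)>|\Delta|/4$, in which case Theorem~\ref{thm:mainAS} applies to $H$ to identify $T$ and its action. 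The uniform bound $\ell\leq\ell_T$ is then obtained from $\meo(G)\leq\meo(\Aut(T)^\ell)\meo(\Sym(\ell))\leq|\Aut(T)|\,e^{2\sqrt{\ell\log\ell}}$, using the $\ell$-independent bound $\meo(\Aut(T)^\ell)\leq|\Aut(T)|$ rather than $\meo(\Aut(T))$. Your SD sketch has the same flaw in its element-order estimate but is repairable the same way; the AS portion of your proposal matches the paper's Theorem~\ref{thm:mainAS} and Proposition~\ref{PSL} and is sound.
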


\begin{remark} 
 \emph{ The possibilities for the degree $n$ of $G$ in Theorem~\ref{main}(4) are, in fact, quite restricted. In column 2 of Table \ref{anothertable}, we list the possibilities for the degree $m$ of the permutation representation of the socle factor $T$ of a primitive group $G$ of PA type of degree $n=m^\ell$. The integer $\ell$ can be as small as $1$, in which case $G$ is of AS type, and has maximum value $\ell_T$, which is also listed in column 2. If $G$ is of HS or SD type (with socle $\mathrm{Alt}(5)^2$) then we simply have $n=60$. } 
\end{remark}

Our choice 
of $n/4$ in  Theorems~\ref{main2} and~\ref{main}  is 
in some sense arbitrary. However it yields a list of exceptions that is not too cumbersome to obtain and 
to use, and yet is  sufficient to provide useful information on the normal covering number of $\Sym(m)$, an application 
described in~\cite{GMPScycles}. (The normal covering number of a non-cyclic group $G$ is the smallest number of 
conjugacy classes of proper subgroups of $G$ such that the union of the subgroups in all of these conjugacy classes 
is equal to $G$, that is to say the classes `cover' $G$.) In~\cite{GMPScycles} we use Theorem~\ref{main} to study primitive permutation groups
containing elements with at most four cycles, and our results about such groups yield critical information on normal covers
of $\Sym(n)$, and a consequent number theoretic application.

\subsection{Comments on the proof of Theorem~\ref{main}}\label{sub1}
Our proof of Theorem~\ref{main} uses the bounds of Theorem~\ref{main2}, and proceeds
according to the structure of $G$ and its socle as specified by the ``O'Nan--Scott type'' of $G$.  
This is one of the most effective modern methods for analysing 
finite  primitive permutation groups. The \emph{socle} $N$ of
$G$ is the subgroup generated by the minimal normal subgroups
of $G$. For an arbitrary finite group the socle is isomorphic to a direct product 
of simple groups, and, for finite primitive
groups these simple groups are pairwise isomorphic. The O'Nan--Scott
theorem describes in detail the embedding of $N$ in $G$ and provides
some useful information on the action of $N$, identifying a small number of 
pairwise disjoint possibilities. The subdivision we use in our proofs is described in~\cite{Pr} where eight types
of primitive groups are defined (depending on the structure and on the
action of the socle), namely HA (\textit{Holomorphic Abelian}), AS
(\textit{Almost Simple}), SD
(\textit{Simple Diagonal}), CD
(\textit{Compound Diagonal}), HS
(\textit{Holomorphic Simple}), HC
(\textit{Holomorphic Compound}), TW 
(\textit{Twisted wreath}), PA
(\textit{Product Action}), and it follows from the O'Nan--Scott Theorem (see~\cite{LPS1} or~\cite[Chapter 4]{DM}) 
that every primitive group is of exactly one of these types. 

In the light of this subdivision, Theorem~\ref{main} asserts that a finite primitive group containing elements of  
large order relative to the degree is either of AS or PA type  (with a well-understood socle), or of HA type, 
or it has bounded order. The proof of Theorem~\ref{main} for primitive groups of HA type is in our companion 
paper~\cite{GMPSaffine}, where we obtain an explicit description of the permutations $g\in G$ with order $|g|\geq n/4$ 
together with detailed information on the structure of $G$. We refer the interested reader to~\cite{GMPSaffine} 
for more information on this case.

\begin{table}
\begin{tabular}{|l|l|ll|l|l|l|l|}\hline
\multicolumn{6}{|c|}{AS type}&HS or SD &PA type\\
\multicolumn{6}{|c|}{}&type&\\
\hline
$\Alt(5)$&$M_{11}$&$\PSL_2(7)$ &$\PSL_2(49)$&$\PSU_3(3)$&$\PSp_6(2)$&$\Alt(5)^2$&$T^\ell$ where\\
$\Alt(6)$&$M_{12}$&$\PSL_2(8)$ &$\PSL_3(3)$ &$\PSU_3(5)$&$\PSp_8(2)$&&$T$ is one of\\
$\Alt(7)$&$M_{22}$&$\PSL_2(11)$ &$\PSL_3(4)$&$\PSU_4(3)$&$\PSp_4(3)$&&the groups\\
$\Alt(8)$&$M_{23}$&$\PSL_2(16)$ &$\PSL_4(3)$ &&&&in the AS type\\
$\Alt(9)$&$M_{24}$&$\PSL_2(19)$&&&&&part of\\
         &$HS$   &$\PSL_2(25)$&  &&&&this table\\\hline
\end{tabular}
\caption{The socles for the exceptions $G$ in Theorem~\ref{main}~\eqref{4}}
\label{exceptions}
\end{table}

\subsection{Structure of the paper}\label{sub2}
In Section~\ref{prel} we determine tight upper bounds on the maximum element orders for the almost simple groups and we 
give in Table~\ref{newtable} some valuable information on the maximum element order of $\Aut(T)$ when $T$ is a simple 
group of Lie type. In Section~\ref{md}, we collect some well-established results on the minimal degree of a permutation 
representation for the non-abelian simple groups. (These include corrections noticed by Mazurov and Vasil$'$ev~\cite{Vav4} 
to~\cite[Table 5.2.A]{KL}.) We then prove Theorem~\ref{main2} in Section~\ref{meo}. The proof of 
Theorem~\ref{main}, which relies on Theorem~\ref{main2}, is given in Section~\ref{sec:them2}. We provide some information 
on the positive integers $\ell_T$ (defined in Theorem~\ref{main2})  in Remark~\ref{rm-1} and in Table~\ref{anothertable}. 
Finally, Section~\ref{sec:playultimate} contains the proof of Theorem~\ref{playultimate}.

\section{Maximum element orders for simple groups}\label{prel}

For a finite group $G$, we write $\exp (G)$ for the \emph{exponent} of $G$; that is, 
the minimum positive integer $k$ for which $g^k=1$ for all $g\in G$. We denote the 
\emph{order} of the element $g\in G$ by $|g|$ and we write $\meo(G)$ for the 
\emph{maximum element order} of $G$; that is, $\meo(G)=\max\{|g|\mid g\in G\}$. 
Clearly, $\meo(G)$ divides $\exp(G)$. 

In this section we study $\meo(G)$ where $G$ is  an almost simple group. We start by considering the symmetric groups. 
It is well-known that 
\[ \meo(\Sym(m)) = \max \{\lcm(n_1, \ldots ,n_N) \mid m=n_1+\cdots+ n_N\}. \] 
The expression $\meo(\Sym(m))$ is often referred to as  \emph{Landau's function} (and is usually denoted by $g(m)$),
in honour of Landau's theorem in~\cite{La1}.  We record  the main results from~\cite{La1} and~\cite{Mass} on $\meo(\Sym(m))$, to which we will refer  in the sequel. As usual $\log (m)$ denotes the logarithm of $m$ to the base $e$.

\begin{theorem}[{\cite{La1} and~\cite[Theorem~$2$]{Mass}}]\label{Landau}
For all $m\geq 3$, we have \[ \sqrt{m\log (m)/4}\leq \log (\meo(\Sym(m)))\leq \sqrt{m\log m}\left(1+\frac{\log(\log (m))-a}{2\log (m)}\right) \]
with $a=0.975$.
\end{theorem}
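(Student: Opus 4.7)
The plan is to treat the two bounds separately, exploiting in both the classical reduction that an optimal partition $m=n_1+\cdots+n_N$ realising $\meo(\Sym(m))$ can be taken to consist of prime powers with pairwise distinct underlying primes. Indeed, if some $n_i$ has two or more distinct prime factors, replacing it by its prime-power factors preserves the lcm but strictly decreases the sum, so further parts can be inserted; and if two parts are powers of the same prime, the smaller is redundant and may be replaced by $1$. After this reduction, $\log\meo(\Sym(m))=\sum_i\log n_i$ where the $n_i$ are prime powers supported on distinct primes with $\sum_i n_i\le m$.

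For the lower bound, I would exhibit an explicit partition. Let $p_1<p_2<\cdots$ enumerate the primes and let $k$ be maximal with $S_k:=p_1+\cdots+p_k\le m$; the partition $(p_1,\ldots,p_k,1,\ldots,1)$ of $m$ has lcm $\prod_i p_i$, so $\log\meo(\Sym(m))\ge\theta(p_k)$, where $\theta$ is the first Chebyshev function. Standard Chebyshev estimates give $S_k\sim p_k^2/(2\log p_k)$ and $\theta(p_k)\sim p_k$; inverting the first relation yields $p_k\sim\sqrt{m\log m}$, and hence $\log\meo(\Sym(m))\ge (1+o(1))\sqrt{m\log m}$. The stated bound $\sqrt{m\log(m)/4}=\tfrac12\sqrt{m\log m}$ is strictly weaker, so it holds for all sufficiently large $m$; the residual small cases $m\ge3$ are handled by direct computation, tabulating $g(m)$ explicitly.

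For the upper bound, after the reduction the problem becomes an optimisation: maximise $\sum_i\log n_i$ over prime powers $n_i$ with distinct primes subject to $\sum_i n_i\le m$. The key analytic input is an effective Chebyshev estimate of the form $\theta(x)= x+\epsilon(x)$ with an explicit error term (Rosser--Schoenfeld). A Lagrangian/convex exchange argument shows that the optimum, up to lower-order corrections, is realised by taking the primes (and possibly a small number of higher prime powers) up to some threshold $N$; imposing $\sum_{p\le N}p\approx m$ then gives $\sum_i\log n_i\approx\theta(N)$ with $N$ and $m$ linked asymptotically by $\theta(N)\sim\sqrt{m\log m}$. Propagating the Rosser--Schoenfeld error through this inversion yields the refined correction factor $1+(\log\log m-a)/(2\log m)$ with the explicit constant $a=0.975$.

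The main obstacle is undoubtedly obtaining the sharp constant $a=0.975$ in the upper bound. This requires (i) a careful exchange argument quantifying how little one can gain by substituting higher prime powers for primes near the threshold $N$, (ii) sharp effective (non-asymptotic) forms of the prime number theorem valid for all $x$ above a modest bound, and (iii) a finite verification for the initial range of $m$ where the asymptotic estimates are too weak to deliver the claimed inequality on their own.
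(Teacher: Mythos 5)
First, note that the paper itself does not prove this theorem: its ``proof'' consists of the single sentence that the lower bound is from Landau~\cite{La1} and the upper bound is from Massias--Nicolas--Robin~\cite{Mass}. So the relevant comparison is between your sketch and those cited arguments, and your outline does correctly reproduce their classical skeleton: the reduction of $\meo(\Sym(m))$ to partitions into prime powers supported on pairwise distinct primes is valid (for coprime $a,b\geq 2$ one has $a+b<ab$, and a smaller power of a repeated prime contributes nothing to the lcm), the lower bound via the partition into the first $k$ primes with $p_1+\cdots+p_k\leq m$ and the inversion $p_k\sim\sqrt{m\log m}$, $\theta(p_k)\sim p_k$ is exactly Landau's route, and the upper bound via an exchange/optimisation argument controlled by effective Chebyshev estimates is the route of~\cite{Mass}.

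The genuine gap is that the theorem as stated is an \emph{effective} inequality valid for all $m\geq 3$ with explicit constants, and your argument establishes neither constant. For the lower bound, ``holds for all sufficiently large $m$, and the residual small cases are handled by direct computation'' is not yet a proof: an asymptotic $\theta(p_k)\sim p_k$ gives no value of $m_0$ beyond which $\log\meo(\Sym(m))\geq\tfrac12\sqrt{m\log m}$, so you must first quote explicit Chebyshev-type bounds (e.g.\ $\theta(x)\geq cx$ for $x\geq x_0$ with named $c,x_0$, and an explicit upper bound for $\sum_{p\leq x}p$) to pin down $m_0$, and only then can a finite check cover $3\leq m\leq m_0$. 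For the upper bound the situation is more serious: the constant $a=0.975$ does not ``propagate'' out of Rosser--Schoenfeld by a routine inversion; extracting it is the entire content of~\cite[Theorem~2]{Mass} and requires their detailed quantification of the loss in the exchange argument together with a substantial finite verification. Your proposal explicitly defers this as ``the main obstacle,'' which is an honest admission that the quantitative heart of the statement remains unproved. As a roadmap your write-up is sound and matches the literature; as a proof it stops exactly where the theorem begins, and the correct resolution (as the paper itself does) is to cite~\cite{La1} and~\cite{Mass} rather than to rederive them.
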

\begin{proof}The lower bound is proved in~\cite{La1} and the upper bound is proved in~\cite{Mass}.
\end{proof}

Since $\Aut(\Alt(m))\cong\Sym(m)$ unless $m \in \{2,6\}$, Theorem~\ref{Landau} 
gives good estimates of the maximum element 
order of $\Aut(\Alt(m))$. And since the minimal degree of a permutation representation of $\Alt(m)$ is $m$, for $m\ne6$, 
we find that $\Alt(m)$ is one of the exceptional groups in Theorem~\ref{main2} listed in Table~\ref{exceptionsmain2}.

For the groups of Lie type, the following three lemmas will be used frequently in the proof of Theorem~\ref{main2}. 
Here $\log_p(x)$ denotes the logarithm  of $x$ to the base $p$ and $\lceil x\rceil$ denotes the least integer $k$ satisfying 
$x\leq k$. We denote by $J_d$ the {\em cyclic unipotent element} of $\GL_d(q)$ that sends the canonical basis element $e_i$ to $e_i+e_{i+1}$ for $i< d$ and fixes $e_d$; that is, $J_d$ is a $d\times d$ unipotent Jordan block. Also, we denote the identity matrix in $\GL_d(q)$ by $I_d$.

\begin{lemma}\label{uni}
Let $u$ be a unipotent element of $\GL_d(p^{f})$ where $p$ is
prime. Then $|u|\leq p^{\lceil \log_p(d)\rceil}$ and equality holds if and only if the Jordan decomposition of $u$  has a block of size $b$ such that $\lceil\log_p(d)\rceil=\lceil\log_p(b)\rceil$.  
\end{lemma}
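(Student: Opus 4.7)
The plan is to reduce to the case of a single Jordan block by using the Jordan decomposition of a unipotent matrix, and then compute the order of a Jordan block directly using the fact that we are in characteristic~$p$.

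First I would put $u$ in Jordan normal form over the algebraic closure of $\mathbb{F}_{p^f}$. Since $u$ is unipotent, all of its eigenvalues equal $1$, so up to conjugacy $u = J_{b_1}\oplus\cdots\oplus J_{b_s}$ where $b_1+\cdots+b_s = d$ and $J_{b_i}$ is a unipotent Jordan block of size $b_i$. The order of a direct sum is the least common multiple of the orders of the summands; but since the order of each $J_{b_i}$ is a power of $p$ (as I will verify in the next step), this least common multiple equals the maximum, so
\[ |u| \;=\; \max_{1\leq i\leq s} |J_{b_i}|. \]

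Next I would compute $|J_b|$ for a single unipotent Jordan block of size $b$. Writing $J_b = I_b + N$ with $N$ the nilpotent matrix satisfying $N^{b-1}\neq 0$ and $N^b = 0$, the Freshman's Dream in characteristic~$p$ gives $(I_b + N)^{p^k} = I_b + N^{p^k}$ for every $k\geq 0$, because the binomial coefficients $\binom{p^k}{i}$ vanish modulo~$p$ for $0<i<p^k$. Hence $J_b^{p^k}=I_b$ if and only if $N^{p^k}=0$, i.e.\ if and only if $p^k\geq b$, i.e.\ if and only if $k\geq \lceil\log_p(b)\rceil$. Since moreover $|J_b|$ must be a power of $p$ (as $J_b$ lies in the unipotent radical of a Borel subgroup, or just because $J_b^{p^{\lceil\log_p(b)\rceil}}=I_b$ forces the order to divide $p^{\lceil\log_p(b)\rceil}$), we conclude $|J_b| = p^{\lceil\log_p(b)\rceil}$.

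Combining the two steps, $|u| = p^{\max_i\lceil\log_p(b_i)\rceil}$. Because each $b_i\leq d$, the function $\lceil\log_p(\cdot)\rceil$ being non-decreasing yields $\lceil\log_p(b_i)\rceil\leq\lceil\log_p(d)\rceil$, proving the inequality $|u|\leq p^{\lceil\log_p(d)\rceil}$. Equality holds precisely when $\max_i\lceil\log_p(b_i)\rceil = \lceil\log_p(d)\rceil$, that is, when some Jordan block of $u$ has size $b$ with $\lceil\log_p(b)\rceil = \lceil\log_p(d)\rceil$, which is exactly the stated characterisation.

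There is no real obstacle here: the only point requiring a moment of care is the characteristic~$p$ identity $(I+N)^{p^k}=I+N^{p^k}$, which underlies the whole computation and must be invoked explicitly, together with the observation that the order of a direct sum of elements of $p$-power order is the maximum (not just the lcm) of those orders.
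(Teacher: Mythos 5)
Your proof is correct and follows essentially the same route as the paper's: reduce to the largest Jordan block, and compute the order of a single block via the identity $(I_b+N)^{p^k}=I_b+N^{p^k}$ in characteristic $p$. The only cosmetic difference is that you make explicit the step ``lcm of $p$-power orders equals the maximum,'' which the paper leaves implicit by working directly with the largest block.
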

\begin{proof}
Let $b$ be the dimension of the largest Jordan block of $u$. Let $B=J_b-I_b$, 
a $b\times b$ matrix over $\mathbb{F}_{p^f}$. Then since $J_b$ is unipotent, 
it follows that $B$ is nilpotent and $B^b=0$. Now fix a positive integer 
$k$. Using the binomial theorem, we have 
\[
J_b^{p^{k}}=(I_b+B)^{p^k}=\sum_{i=0}^{p^k}\binom{p^k}{i}B^i.
\]
Since $\binom{p^k}{i}$ is divisible by $p$ for every $i\in
\{1,\ldots,p^k-1\}$, we have $J_b^{p^k}=I_b+B^{p^k}$. In particular, $J_b^{p^k} =I_b$ if and only if $B^{p^k}=0$. 
Since $J_b$ is a cyclic unipotent element, $b$ is the least positive integer such that $B^b=0$; therefore $r=\lceil 
\log_p(b)\rceil$ is the least nonnegative integer such that $B^{p^r}=0$. Thus $|J_b|=p^{\lceil \log_p(b)\rceil}$.

Suppose that the maximum size of a Jordan block of $u$ is $b$. Then by the previous paragraph, 
$|u|=|J_b|= p^{\lceil \log_p(b)\rceil}$.  Since $b\leq d$, this implies that $|u|\leq p^{\lceil \log_p(d)\rceil}$ and that equality holds if and only if $\lceil\log_p(d)\rceil=\lceil\log_p(b)\rceil$.  
\end{proof}

The following elementary lemma, on the direct product of cyclic groups, will be applied to the maximal tori of groups of Lie type.

\begin{lemma}\label{silly1}
Let $k$ be a positive integer, and for each $i\in \{1,\ldots,t\}$, let $k_i$ be a multiple of $k$ and let 
$C_{i}=\langle x_i\rangle$ be a cyclic group of order $k_i$. Let $C$ be the subgroup of $G:= C_1\times\cdots\times C_t$
of order $k$ generated by $x_1^{k_1/k}\cdots x_t^{k_t/k}$. Then the exponent of the quotient group $G/C$ is $k_1/k$ if 
$t=1$ and $\lcm\{k_1,\ldots,k_t\}$ if $t \ge 2$.
\end{lemma}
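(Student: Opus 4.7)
The plan is to work additively throughout, identifying $G$ with $\mathbb{Z}/k_1\mathbb{Z}\oplus\cdots\oplus\mathbb{Z}/k_t\mathbb{Z}$ and $C$ with the cyclic subgroup generated by the vector $v=(k_1/k,\ldots,k_t/k)$, whose $k$ distinct multiples exhaust $C$. Since the exponent of a finite abelian group is the maximum of the orders of its elements, I need to bound $\exp(G/C)$ from above and also exhibit an element of matching order.

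The case $t=1$ is essentially a triviality: $G/C$ is a cyclic group of order $k_1/k$, so its exponent is $k_1/k$, as claimed. For the case $t\geq 2$, the upper bound $\exp(G/C)\mid M$, where $M:=\lcm\{k_1,\ldots,k_t\}$, is immediate because $M$ already annihilates $G$ itself.

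For the matching lower bound, I would compute the order in $G/C$ of each standard generator $x_i=(0,\ldots,0,1,0,\ldots,0)$. If $nx_i\in C$, then $nx_i=jv$ for some $j\in\{0,\ldots,k-1\}$. Comparing the $\ell$-th coordinate for any $\ell\neq i$ (which exists precisely because $t\geq 2$) yields $jk_\ell/k\equiv 0\pmod{k_\ell}$, forcing $k\mid j$ and hence $j=0$; comparing the $i$-th coordinate then gives $k_i\mid n$. Thus the order of $x_i$ in $G/C$ is exactly $k_i$, so $k_i\mid \exp(G/C)$; taking the least common multiple over $i$ yields $M\mid \exp(G/C)$, and combined with the upper bound this gives equality.

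The one conceptually important point, rather than a genuine obstacle, is that the coordinate argument really does require $t\geq 2$: when there are no ``other'' coordinates to force $j=0$, the order of $x_1$ in the quotient collapses to $k_1/k$, which is precisely what the $t=1$ clause of the statement predicts. Apart from this dichotomy, the proof is a short direct calculation in finite abelian groups with no subtleties.
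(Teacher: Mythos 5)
Your proof is correct and follows essentially the same route as the paper: the upper bound comes from $\exp(G)=\lcm\{k_1,\ldots,k_t\}$, and the lower bound from showing each generator $x_i$ retains its full order $k_i$ modulo $C$, your coordinate comparison being exactly the paper's observation that $C_i\cap C=1$ when $t\geq 2$. No issues.
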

\begin{proof}
If $t=1$, then the exponent of $\langle x_1\rangle/\langle x_1^{k_1/k}\rangle$ is clearly $k_1/k$. 
So suppose that $t \ge 2$. Set $r=\lcm\{k_1,\ldots,k_t\}$ and $r'=\exp(G/C)$. The group $G$ has exponent $r$ and so 
$r'=\exp(G/C)\leq r$. Conversely, for each $i\in \{1,\ldots,t\}$, we have $x_i^{r'}\in C$. Since $t \ge 2$, we have $C_{i}\cap C=1$
because the non-trivial elements of $C$ all have the form $x_1^{jk_1/k}\cdots x_t^{jk_t/k}$ with $1\leq j<k$, and so do not lie in $C_i$. 
Thus $x_i^{r'}=1$. This shows that, for each $i\in \{1,\ldots,t\}$, the integer $k_i$ divides $r'$. Therefore $r\leq r'$, and so $r'=r$.
\end{proof}

The following technical lemma  will be applied repeatedly to estimate the maximum element order of a group of Lie type.

\begin{lemma}\label{silly2} Suppose that $m,k,f,p$ are positive integers where $p$ is prime and $q=p^f$. Then
\begin{enumerate}
\item[(i)]$q^k-1$ divides $q^{km}-1$ and $(q^{km}-1)/(q^k-1)\geq p^{\lceil\log_p(m)\rceil}$;

\item[(ii)]if $m$ is odd, then $q^{k}+1$ divides $q^{km}+1$; furthermore, if  $(p,k,m,f)\neq (2,1,3,1)$, then $(q^{km}+1)/(q^k+1)\geq p^{\lceil \log_p(m)\rceil}$;

\item[(iii)]if $m$ is even, then $q^k+1$ divides $q^{km}-1$; furthermore, if  $(k,m,f)\neq (1,2,1)$, then $(q^{km}-1)/(q^k+1)\geq  p^{\lceil\log_p(m)\rceil}$. 
\end{enumerate}
\end{lemma}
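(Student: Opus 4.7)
The plan is to establish divisibility via elementary factorizations and to derive the inequalities by bounding each quotient from below by its dominant term (or by the first pair after grouping an alternating sum), then splitting into cases according to the size of $fk$. For (i), the factorization $q^{km}-1 = (q^k-1)\sum_{i=0}^{m-1} q^{ki}$ gives both divisibility and the lower bound $(q^{km}-1)/(q^k-1) \geq q^{k(m-1)} \geq p^{m-1}$; the elementary inequality $p^{m-1} \geq m$ (for $p\geq 2$, $m\geq 1$, by induction) then yields $p^{m-1} \geq p^{\lceil\log_p m\rceil}$. For (ii) with $m$ odd, divisibility follows from $x^m+1 = (x+1)(x^{m-1}-x^{m-2}+\cdots+1)$ with $x=q^k$; for (iii) with $m$ even, $q^k\equiv -1 \pmod{q^k+1}$ gives $q^{km} \equiv 1 \pmod{q^k+1}$.

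For the inequalities in (ii) and (iii) I plan a uniform approach. The trivial case $m=1$ of (ii) yields quotient $1 = p^0$, and the base case $m=2$ of (iii) is handled by direct computation. For $m\geq 3$ odd, grouping the alternating sum $(q^{km}+1)/(q^k+1)$ into consecutive pairs gives
\[
q^{k(m-2)}(q^k-1) + q^{k(m-4)}(q^k-1) + \cdots + q^k(q^k-1) + 1,
\]
so the quotient is at least $q^{k(m-2)}(q^k-1)+1$. For $m\geq 4$ even, the analogous grouping writes $(q^{km}-1)/(q^k+1)$ as $(q^k-1)\sum_{j=0}^{m/2-1} q^{2kj}$, which is bounded below by $q^{k(m-2)}(q^k-1)$.

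The main case split is on $fk$. If $fk\geq 2$, then $q^k-1 \geq p^{fk-1}$ (since $(p-1)p^{fk-1}\geq 1$), so the quotient is at least $p^{fk(m-1)-1}\geq p^{m-1}\geq p^{\lceil\log_p m\rceil}$, and no exceptions arise. If $fk=1$ (so $q=p$), the quotient is at least $p^{m-2}(p-1)$; for $m\geq 4$ this already gives $p^{m-2}(p-1)\geq p^{m-2}\geq p^{\lceil\log_p m\rceil}$, using $p^{m-2}\geq m$. The remaining small cases are handled by direct computation: in (iii) with $m=2$ and $k=f=1$ the quotient is $p-1 < p=p^{\lceil\log_p 2\rceil}$, producing the sole exception $(k,m,f)=(1,2,1)$; in (ii) with $m=3$ and $k=f=1$ the quotient is $p^2-p+1$, which falls short of $p^{\lceil\log_p 3\rceil}$ precisely when $p=2$ (giving $3<4$), producing the exception $(p,k,m,f)=(2,1,3,1)$. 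The main obstacle is the careful small-case analysis needed to isolate exactly these two exceptions from the generic picture.
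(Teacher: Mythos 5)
Your proposal is correct and follows essentially the same route as the paper: divisibility from the standard factorizations, a lower bound on each quotient by its leading term(s) of the (alternating) geometric sum, and reduction to an inequality of the form $p^{e}\geq m$ to conclude $e\geq\lceil\log_p m\rceil$; your pair-grouping and the split on $fk$ are only a slightly sharper and more explicit organization of the same small-case analysis, and you isolate exactly the two exceptions the lemma records.
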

\begin{proof}
The divisibility assertions in~(i),~(ii) and~(iii) are obvious. For Part~(i), note that $(q^{km}-1)/(q^k-1)=q^{k(m-1)}
+q^{k(m-2)}+\cdots +q^k+1\geq q^{k(m-1)}$. Furthermore, $q^{k(m-1)}\geq q^{m-1}\geq p^{m-1}\geq m$ and so $m-1 \ge 
\log_p(m)$. However $m-1$ is an integer, so $m-1 \ge \lceil \log_p(m) \rceil$ and  $(q^{km}-1)/(q^k-1)\geq p^{m-1} 
\geq p^{\lceil\log_p(m)\rceil}$. 

Assume that $m$ is odd. The assertions hold if $m=1$, so assume that $m\geq3$. Then $(q^{km}+1)/(q^k+1)\geq q^{k(m-2)}= 
p^{fk(m-2)}\geq m$ (where the last inequality holds for $m\ge 3$ provided $(p,k,m,f)\neq (2,1,3,1)$). So, arguing as in 
the previous paragraph, we have $(q^{km}+1)/(q^k+1)\geq  p^{\lceil\log_p(m)\rceil}$ for $(p,k,m,f)\neq (2,1,3,1)$, which 
gives Part~(ii).

Next, suppose that $m$ is even. The assertions all hold for $m=2$ unless $(k,m,f)=(1,2,1)$. 
So assume that $m\geq 4$. Then $(q^{km}-1)/(q^k+1)\geq q^{k(m-2)}= p^{fk(m-2)}\geq m$. Now arguing as in the first paragraph 
we have $(q^{km}-1)/(q^k+1)\geq  p^{\lceil\log_p(m)\rceil}$, which proves Part~(iii).
\end{proof}

Before proceeding and obtaining some tight bounds on the maximum element order for the groups of Lie type, 
we need to prove some results on centralizers of semisimple elements in $\PGL_d(q)$ and related classical groups. 
In order to do so, we introduce some notation.

\begin{notation}\label{not1}{\rm 
Let $\delta=1$ unless we deal with a unitary group in which case let $\delta=2$.
Let $s$ be a semisimple element of $\PGL_d(q^\delta)$ and let $\overline{s}$ be a semisimple 
element of $\GL_d(q^\delta)$ projecting to $s$ in $\PGL_d(q^\delta)$. The action of the matrix 
$\overline{s}$ on the $d$-dimensional vector space $V=\mathbb{F}_{q^\delta}^d$ naturally 
defines the structure of an $\mathbb{F}_{q^\delta}\langle \overline{s}\rangle$-module on $V$.  
Since $\overline{s}$ is semisimple, $V$ decomposes, by Maschke's theorem, as a direct 
sum of irreducible $\mathbb{F}_{q^\delta}\langle \overline{s}\rangle$-modules, that is, 
$V=V_1\oplus\cdots\oplus V_{l}$, with $V_i$ an irreducible $\mathbb{F}_{q^\delta}\langle 
\overline{s}\rangle$-module. Relabelling the index set $\{1,\ldots,l\}$ if necessary, 
we may assume that the first $t$ submodules $V_1,\ldots,V_t$ are pairwise non-isomorphic 
(for some $t\in\{1,\ldots,l\}$) and that for $j\in\{t+1,\ldots,l\}$,  $V_j$ is isomorphic 
to some $V_i$ with $i\in\{1,\ldots,t\}$. Now, for $i\in \{1,\ldots,t\}$, let $\mathcal{W}_i
=\{W\leq V\mid W\cong V_i\}$, the set of $\mathbb{F}_{q^\delta}\langle \overline{s}\rangle$-submodules 
of $V$ isomorphic to $V_i$ and write $W_i=\sum_{W\in \mathcal{W}_i}W$. The module $W_i$ is 
usually referred to as the \emph{homogeneous} component of $V$ corresponding to the simple 
submodule $V_i$. We have $V=W_1\oplus \cdots \oplus W_t$. Set $a_i=\dim_{\mathbb{F}_{q^\delta}}(W_i)$. 
Since $V$ is completely reducible, we have  $W_i=V_{i,1}\oplus\cdots\oplus V_{i,m_i}$ for some 
$m_i\geq 1$, where $V_{i,j}\cong V_i$, for each $j\in \{1,\ldots,m_i\}$. Thus we have $a_i=
d_im_i$, where $d_i=\dim_{\mathbb{F}_{q^\delta}}V_i$, and $
\sum_{i=1}^{t}d_im_i=d$. For 
$i\in \{1,\ldots,t\}$, we let $x_i$ (respectively $y_{i,j}$) denote the element in $\GL(W_i)$ 
(respectively $\GL(V_{i,j})$) induced by the action of $\overline{s}$ on $W_i$ (respectively 
$V_{i,j}$). In particular, $x_i=y_{i,1}\cdots y_{i,m_i}$ and $\overline{s}=x_1\cdots x_t$.
We note further that 
\[ 
p(s)=(\underbrace{d_1,\ldots,d_1}_{m_1\textrm{ times}},\underbrace{d_2,\ldots,d_2}_{m_2\textrm{ times}},
\ldots,\underbrace{d_t,\ldots,d_t}_{m_t\textrm{ times}}) 
\]
is a partition of $n$. 

Now let $c \in \cent {\GL_d(q^\delta)}{\overline{s}}$. Given $i\in \{1,\ldots,t\}$ and 
$W\in \mathcal{W}_i$, we see that $W^c$ is an $\mathbb{F}_{q^\delta}\langle \overline{s}\rangle$-submodule 
of $V$ isomorphic to $W$ (because $c$ commutes with $\overline{s}$). Thus $W^c\in\mathcal{W}_i$. This shows 
that $W_i$ is $\cent{\GL_d(q^\delta)}{\overline{s}}$-invariant. It follows that 
\[ 
\cent{\GL_d(q^\delta)}{\overline{s}}=\cent{\GL(W_1)}{x_1}\times\cdots\times \cent{\GL(W_t)}{x_t} 
\] 
and every unipotent element of $\cent {\GL_d(q^\delta)}{\overline{s}}$ is of the 
form $u=u_1\cdots u_t$ with $u_i\in \cent{\GL(W_i)}{x_i}$ unipotent in $\GL(W_i)$, for each $i$. 

Since $\overline{s}$ is semisimple and $V_{i,j}$ is irreducible, Schur's lemma 
implies that $V_{i,j}\cong \mathbb{F}_{q^{\delta d_i}}$ and that the action of $y_{i,j}$ 
on $V_{i,j}$  is equivalent to the scalar multiplication action  on $\mathbb{F}_{q^{d_i}}$ by  
a field generator $\lambda_{i,j}$ of $\mathbb{F}_{q^{\delta d_i}}$. As $V_{i,j_1}\cong V_{i,j_2}$, we 
have $\lambda_{i,j_1}=\lambda_{i,j_2}$,  for $j_1,j_2\in\{1,\ldots,,m_i\}$ and  we write 
$\lambda_i=\lambda_{i,1}$. Under this identification, replacing 
$x_i$ by a suitable conjugate in $\GL_{a_i}(q^\delta)$ if necessary, we have $x_i=\lambda_i 
I_{m_i}\in\GL_{m_i}(q^{\delta d_i}) < \GL_{a_i}(q^\delta)$. 
Now a direct computation shows that $\cent{\GL(W_i)}{x_i}\cong \GL_{m_i}(q^{\delta d_i})$.
}
\end{notation}
  
\begin{proposition}\label{more refined}
Let $s$ be as in Notation~$\ref{not1}$. A unipotent element $u$ of $\PGL_d(q)$ centralizing $s$ has order at most $\max\{p^{\lceil \log_p(m_1)\rceil},\ldots,p^{\lceil \log_p(m_t)\rceil}\}$. 
\end{proposition}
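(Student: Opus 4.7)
The plan is to lift $u$ to a unipotent element $\hat u$ of $\GL_d(q^\delta)$, promote the commutation of $u$ with $s$ in $\PGL$ to a genuine commutation of $\hat u$ with $\bar s$ in $\GL_d(q^\delta)$, and then read off the bound from the block decomposition
\[
\cent{\GL_d(q^\delta)}{\bar s}=\cent{\GL(W_1)}{x_1}\times\cdots\times\cent{\GL(W_t)}{x_t}
\]
provided in Notation~\ref{not1}, applying Lemma~\ref{uni} inside each factor $\cent{\GL(W_i)}{x_i}\cong\GL_{m_i}(q^{\delta d_i})$.

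To construct $\hat u$, I take any preimage $\tilde u\in\GL_d(q)$ of $u$ and consider its Jordan decomposition $\tilde u=\tilde u_s\tilde u_u$ in $\GL_d(q)$. Because $u$ has $p$-power order in $\PGL_d(q)$, the commuting factors $\pi(\tilde u_s)$ (of order coprime to $p$) and $\pi(\tilde u_u)$ (of $p$-power order) have product $u$, which forces $\pi(\tilde u_s)=1$. Hence $\tilde u_s$ is scalar and $\hat u:=\tilde u_u\in \GL_d(q)\leq\GL_d(q^\delta)$ is a unipotent lift of $u$. The commutation of $u$ and $s$ in $\PGL_d(q^\delta)$ then yields $[\hat u,\bar s]=c$ for some scalar $c$; since $c^{|\hat u|}=c^{|\bar s|}=1$ with $|\hat u|$ a $p$-power and $|\bar s|$ coprime to $p$, we obtain $c=1$, so $\hat u$ truly lies in $\cent{\GL_d(q^\delta)}{\bar s}$.

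By Notation~\ref{not1}, this lets me write $\hat u=\hat u_1\cdots\hat u_t$ with $\hat u_i\in\cent{\GL(W_i)}{x_i}\cong\GL_{m_i}(q^{\delta d_i})$, and each $\hat u_i$ is unipotent: the restriction of the unipotent element $\hat u$ to the invariant subspace $W_i$ is unipotent, and unipotence is equivalent to having $p$-power order, a property preserved by the isomorphism. Lemma~\ref{uni} now gives $|\hat u_i|\leq p^{\lceil\log_p(m_i)\rceil}$ for each $i$, and since all these orders are powers of $p$,
\[
|u|\textrm{ divides }|\hat u|=\lcm(|\hat u_1|,\ldots,|\hat u_t|)=\max_{1\leq i\leq t}|\hat u_i|\leq\max\{p^{\lceil\log_p(m_1)\rceil},\ldots,p^{\lceil\log_p(m_t)\rceil}\},
\]
which is the required bound. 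The only mildly delicate step is the lift-and-commutation argument in the middle paragraph, which hinges on the coprimality of the orders of unipotent and semisimple elements; once this is in place, the result is a routine assembly of Notation~\ref{not1} and Lemma~\ref{uni}.
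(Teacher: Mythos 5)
Your proof is correct and follows essentially the same route as the paper: both arguments promote the commutation of $u$ with $s$ in $\PGL$ to genuine commutation of a unipotent lift with $\overline{s}$ in $\GL$ by killing the scalar commutator via a coprimality argument (the paper uses $c^{q-1}=1$ against the $p$-power order of $\overline{u}$, you use the coprime orders of $\hat u$ and $\overline{s}$), and then apply Lemma~\ref{uni} inside each factor $\cent{\GL(W_i)}{x_i}\cong\GL_{m_i}(q^{\delta d_i})$. Your explicit Jordan-decomposition construction of the unipotent lift fills in a detail the paper only asserts (uniqueness/existence of the unipotent preimage), but this is a cosmetic difference, not a different method.
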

\begin{proof}We use the notation established in Notation~\ref{not1}. Let $u$ be a unipotent element of $\PGL_d(q)$ and let 
$\overline{u}$ be the unique unipotent element of $\GL_d(q)$ projecting to $u$. 
Since $u$ centralizes $s$, $\overline{u}$ commutes with $\overline{s}$ modulo $\Z{\GL_d(q)}$. Thus $\overline{u}\,
\overline{s}=(\overline{s}\,\overline{u})c$, for some scalar matrix $c$ of $\GL_d(q)$. Arguing by induction, we see 
that, for each $k\geq 1$, we have $\overline{u}^k\overline{s}=\overline{s}\,\overline{u}^kc^k$. In particular, for $k=q-1$, 
since $c^{q-1}=1$, it follows that $\overline{u}^{q-1}$ centralizes $\overline{s}$. Since the order of $\overline{u}$ is a 
$p$-power, we find that $\overline{u}$ centralizes $\overline{s}$. Thus $|u|$ is bounded above by the maximum order a unipotent 
element in $\cent{\GL_d(q)}{\overline{s}}\cong \GL_{m_1}(q^{d_1})\times \cdots \times \GL_{m_t}(q^{d_t})$. The result now
follows from Lemma~\ref{uni}.
\end{proof}

The following corollary is well-known and somehow not surprising.
\begin{corollary}\label{cor:PGL}$\meo(\PGL_d(q))=(q^d-1)/(q-1)$.
\end{corollary}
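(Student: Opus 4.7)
The plan is to establish the two inequalities separately.

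For the lower bound, I would exhibit a Singer cycle. Viewing $\mathbb{F}_{q^d}$ as a $d$-dimensional $\mathbb{F}_q$-vector space, the multiplication action of $\mathbb{F}_{q^d}^*$ on itself embeds $\mathbb{F}_{q^d}^*$ as a cyclic subgroup of $\GL_d(q)$ of order $q^d-1$ containing the scalars $\mathbb{F}_q^*$. Projecting into $\PGL_d(q)$ yields a cyclic subgroup of order exactly $(q^d-1)/(q-1)$, and hence an element of that order.

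For the upper bound, I would use a short algebraic argument. Let $g\in\PGL_d(q)$ and fix a preimage $\bar g\in\GL_d(q)$. Form the commutative $\mathbb{F}_q$-subalgebra $A:=\mathbb{F}_q[\bar g]$ of $M_d(\mathbb{F}_q)$. By Cayley--Hamilton the minimal polynomial of $\bar g$ has degree at most $d$, so $\dim_{\mathbb{F}_q}A\le d$, whence $|A^{*}|\le |A|-1\le q^d-1$. The subgroup $H=\langle\bar g,\mathbb{F}_q^{*}\rangle$ lies in $A^{*}$, and the natural surjection $H\twoheadrightarrow\langle g\rangle$ has kernel exactly $\mathbb{F}_q^{*}$, so
\[
(q-1)\,|g|=|H|\le |A^{*}|\le q^d-1,
\]
and dividing by $q-1$ finishes the proof.

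There is essentially no obstacle; the only nontrivial input is the dimension bound on $\mathbb{F}_q[\bar g]$. Alternatively, to stay closer to the machinery already developed, one can argue via Notation~\ref{not1}: writing the Jordan decomposition $\bar g=\bar s\bar u$ and using that $\bar g$ preserves each homogeneous component $W_i$ on which it acts as $\lambda_iu_i$ with $\lambda_i\in\mathbb{F}_{q^{d_i}}^{*}$ and $u_i$ unipotent in $\GL_{m_i}(q^{d_i})$, one bounds $|\lambda_iu_i|\le (q^{d_i}-1)p^{\lceil\log_p m_i\rceil}\le q^{a_i}-1$ using Lemma~\ref{uni} and Lemma~\ref{silly2}(i); then $|\bar g|\le\prod_i(q^{a_i}-1)\le q^d-1$ since $\sum_ia_i=d$. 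However this only bounds $|\bar g|$, and one still has to extract the factor $q-1$ coming from the intersection of $\langle\bar g\rangle$ with the scalars; the cleanest way to do that is the subalgebra argument above, which is why I would prefer to present the proof in that form.
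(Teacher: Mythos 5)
Your proof is correct, and for the upper bound it takes a genuinely different route from the paper's. The paper writes $g=su=us$ with $s$ semisimple and $u$ unipotent, decomposes the natural module into homogeneous components as in Notation~\ref{not1}, bounds the unipotent part via Proposition~\ref{more refined} and Lemma~\ref{uni}, and then extracts the factor $q-1$ by combining Lemma~\ref{silly1} (for $t=1$) with the observation that the $\lcm$ of $t$ integers each divisible by $q-1$ is at most their product over $(q-1)^{t-1}$ (for $t\geq 2$), finishing with Lemma~\ref{silly2}(i). Your commutative-subalgebra argument --- $A=\mathbb{F}_q[\overline{g}]$ has $\dim_{\mathbb{F}_q}A\leq d$ by Cayley--Hamilton, so $(q-1)|g|=|H|\leq|A^{*}|\leq q^d-1$ --- is shorter and self-contained, and it handles the passage to $\PGL_d(q)$ in one stroke via the kernel computation, which, as you correctly identify, is the fiddly point in the torus-plus-unipotent approach (your sketched alternative via Notation~\ref{not1} only bounds $|\overline{g}|$ and would still need the $(q-1)^{t-1}$ bookkeeping). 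What the paper's longer route buys is reusability: the homogeneous-component decomposition and Lemmas~\ref{silly1},~\ref{silly2} and Proposition~\ref{more refined} are the workhorses for the symplectic, orthogonal and unitary bounds later in the section (e.g.\ Lemma~\ref{cor:PSp}), where the targets such as $q^{m+1}/(q-1)$ for $\PCSp_{2m}(q)$ lie far below $(q^{2m}-1)/(q-1)$ and cannot be reached by a crude count of units in $\mathbb{F}_q[\overline{g}]$; it also identifies which elements attain the maximum, which the paper exploits afterwards. As a freestanding proof of this corollary, though, your version is perfectly rigorous and arguably cleaner.
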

\begin{proof}A Singer cycle of $\PGL_d(q)$ has order $(q^d-1)/(q-1)$ and so $\meo(\PGL_d(q))\geq (q^d-1)/(q-1)$.
Let $g\in\PGL_d(q)$. Then $g$ has a unique expression as $g=su=us$ with $s$ semisimple and $u$ unipotent. 
We use Notation~\ref{not1} for the element $s$. By Lemma~\ref{silly1} and the proof of Proposition~\ref{more refined}, 
we see that if $t=1$, so that $d=m_1d_1$, then
\[ 
|g|\leq \frac{q^{d_1}-1}{q-1}p^{\lceil\log_p(m_1)\rceil}\leq \frac{q^{d}-1}{q-1} 
\] 
(using Lemma~\ref{silly2}(i)). If $t\geq2$, then 
\[
|g|\leq  \lcm\{(q^{d_i}-1)p^{\lceil\log_p(m_i)\rceil }\mid i=1,\ldots,t \}
\le  \frac{1}{(q-1)^{t-1}}\prod_{i=1}^{t}(q^{d_i}-1)p^{\lceil\log_p(m_i)\rceil },
\]
which by Lemma~\ref{silly2}~(i) is at most 
\[  \frac{1}{(q-1)^{t-1}}\prod_{i=1}^{t}(q^{d_im_i}-1) \le \frac{q^{d}-1}{(q-1)^{t-1}}\leq \frac{q^d-1}{q-1}.
\]
\end{proof}

\begin{remark}{\rm 
As one might expect, sometimes we have $\meo(\Aut(\PSL_d(q)))>(q^d-1)/(q-1)$. For example, $\PGL_2(4)=\PSL_2(4)\cong \Alt(5)$ 
and $\meo(\PSL_2(4))=5$, but  $\Aut(\Alt(5))=\Sym(5)$ and $\meo(\Sym(5))=6$. Later, in Theorem~\ref{cor:PGammaL} 
(using an application of Lang's theorem) we will prove that, in fact, $\meo(\Aut(\PSL_d(q)))=(q^d-1)/(q-1)$ in all other cases.} 
\end{remark}

Before studying other classical groups we need the following number-theoretic lemma which will be crucial in 
studying  the asymptotic value of $\meo(\PSp_{2m}(q))$ as $m$ tends to infinity  
(see Corollary~\ref{cor:PSp} and Remark~\ref{rem:2}). In the proof of Lemma~\ref{tori}, we denote by $(a)_2$ the 
largest power of $2$ dividing the positive integer $a$.

\begin{lemma}\label{tori}Let $(a_1,\ldots,a_t)$ be a partition of
  $d$, let $q$ be a prime power and, for each $i\in \{1,\ldots,t\}$, let $\varepsilon_i\in\{-1,1\}$. Then
  $\lcm_{i=1}^t\{q^{a_i}-\varepsilon_i\}\leq q^{d+1}/(q-1)$ if $q$ is even or $t=1$, and 
$\lcm_{i=1}^t\{q^{a_i}-\varepsilon_i\}\leq q^{d+1}/2(q-1)$ if $q$ is odd and $t\geq2$.
\end{lemma}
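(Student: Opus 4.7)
The plan is to proceed by induction on $t$. The base case $t=1$ reduces to $q^{a_1}-\varepsilon_1 \le q^d+1$, and the inequality $(q^d+1)(q-1) \le q^{d+1}$ is equivalent to $q-1 \le q^d$, which is immediate.

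For $t\ge 2$, I would split the indices by the sign of $\varepsilon_i$: set $I^\pm = \{i:\varepsilon_i=\pm 1\}$ and $d^\pm=\sum_{i\in I^\pm}a_i$, and write $L=\lcm(L^+,L^-)$ with
\[
L^+ = \lcm_{i\in I^+}(q^{a_i}-1), \qquad L^- = \lcm_{i\in I^-}(q^{a_i}+1).
\]
For $L^+$, every factor is divisible by $q-1$, and $\gcd(q^a-1,q^b-1)=q^{\gcd(a,b)}-1\ge q-1$; iterating $\lcm(x,y)=xy/\gcd(x,y)$ then yields
\[
L^+ \;\le\; \frac{\prod_{i\in I^+}(q^{a_i}-1)}{(q-1)^{|I^+|-1}} \;\le\; \frac{q^{d^+}}{(q-1)^{|I^+|-1}}.
\]

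For $L^-$ I aim for the parallel bound $L^-\le q^{d^-+1}/(q-1)$ when $q$ is even or $|I^-|=1$, with an extra factor of $2$ in the denominator when $q$ is odd and $|I^-|\ge 2$. The key ingredients will be: (i) the divisibility $q+1\mid q^a+1$ whenever $a$ is odd; (ii) for $q$ odd, that every $q^{a_i}+1$ is even; and (iii) a subdivision of $I^-$ according to the $2$-adic valuation of the $a_i$, exposing common cyclotomic factors within each class. Iterated gcd-peeling within each class, together with a combination across classes, will yield the required estimate on $L^-$.

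Once $L^\pm$ are bounded, combination is routine. When both $I^+$ and $I^-$ are nonempty,
\[
L \;\le\; L^+ L^- \;\le\; \frac{q^{d^+}}{(q-1)^{|I^+|-1}}\cdot\frac{q^{d^-+1}}{q-1} \;=\; \frac{q^{d+1}}{(q-1)^{|I^+|}} \;\le\; \frac{q^{d+1}}{q-1},
\]
and the degenerate cases with $I^+$ or $I^-$ empty follow directly from the corresponding one-sided estimate. For $q$ odd with $t\ge 2$, the extra factor of $2$ comes either from the sharpened $L^-$ bound (when $|I^-|\ge 2$) or from the common factor of $2$ shared by $L^+$ and $L^-$ when both are nonempty (both being even for $q$ odd). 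The main obstacle will be the $L^-$ estimate: unlike the universal divisor $q-1$ available for $L^+$, the factors $q^{a_i}+1$ admit no universal divisor, and the common divisors depend sensitively on the parities of the $a_i$, making the parity-based subdivision of $I^-$ the technical crux of the proof.
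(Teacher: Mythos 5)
Your overall architecture --- splitting the index set by the sign of $\varepsilon_i$, bounding $L^+$ by repeatedly peeling off the universal common divisor $q-1$, and recombining at the end --- is sound, and the $L^+$ estimate and the final combination are fine. But the proposal defers exactly the step that carries the whole lemma, namely $L^-=\lcm_{i\in I^-}\{q^{a_i}+1\}\leq q^{d^-+1}/(q-1)$, and the sketch you give for it (``iterated gcd-peeling within each class, together with a combination across classes'') does not contain the idea that makes it work. For $q$ even, $q^{a}+1$ and $q^{b}+1$ have a nontrivial common divisor precisely when $a$ and $b$ have the same $2$-adic valuation; so in the extremal configuration the valuations of the $a_i$ are pairwise distinct, the factors are pairwise \emph{coprime}, and there is nothing to peel. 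One is then forced to bound the full product, i.e.\ to show $\prod_{i\in I^-}(1+q^{-a_i})\leq q/(q-1)$, and the only reason this holds is that the $a_i$ then have distinct $2$-adic valuations, whence
\[
\prod_{i\in I^-}\left(1+\frac{1}{q^{a_i}}\right)\leq\prod_{k\geq0}\left(1+\frac{1}{q^{2^k}}\right)=\sum_{r\geq0}\frac{1}{q^r}=\frac{q}{q-1}
\]
by uniqueness of binary expansions. This identity (or, when some pairwise gcd $r\geq2$ is available, the cruder estimate $\prod_i(1+q^{-a_i})\leq\exp(1/(q-1))$, which is how the paper handles the non-coprime case) is the missing ingredient. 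Note that there is no slack to absorb a sloppier argument: for $q=2$ one has $\lcm(3,5,17,257)=2^{16}-1$ against the bound $2^{16}$, so the $L^-$ estimate is essentially attained.

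A second, smaller omission: you never treat repeated parts $a_i=a_j$ inside $I^-$. The paper removes these first (by induction on $d$, deleting a duplicated part); without that reduction the product $\prod_{i\in I^-}(1+q^{-a_i})$ can exceed $q/(q-1)$, and your class-by-class accounting would have to recover the loss from the repeated common factor, which you would need to verify. For comparison, the paper's proof avoids the $I^+/I^-$ decomposition altogether: after discarding duplicates it extracts a single pairwise gcd $r$ together with the factor $s=\gcd(q-1,2)^{t-1}$, bounds $L\leq (q^d/rs)\prod_{k\geq1}(1+q^{-k})\leq(q^d/rs)e^{1/(q-1)}$ when $r\geq2$, and reserves the binary-expansion identity for the genuinely pairwise-coprime (hence $q$ even) case. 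Your plan can be completed, but only by importing that identity; as written it stops short of the crux.
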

\begin{proof}
Set $L:=\lcm_{i=1}^t\{q^{a_i}-\varepsilon_i\}$.
If $t=1$, then $L=q^{d}-\varepsilon_1\leq q^{d}+1=q^{d}(1+1/q^d)\leq q^{d+1}/(q-1)$ and the lemma is proved. 
Thus we may assume that $t>1$. We argue by induction on $d$. Write
$I=\{i\in\{1,\ldots,t\}\mid \varepsilon_i=-1\}$.  If 
$a_i=a_j$ for distinct elements $i, j\in I$ then, replacing $d$ by $d-a_j$ and replacing the partition
$(a_1,\ldots,a_t)$ by the same partition with the part $a_j$
removed, it follows by induction that $L\leq q^{d-a_j+1}/(q-1)\leq q^{d+1}/2(q-1)$. 
Therefore, we may assume further that the set $\{a_i\}_{i\in I}$
consists of pairwise distinct elements. Let $\alpha$ and $\beta$ be distinct elements of 
$\{1,\ldots,t\}$ and write $r=\gcd(q^{a_\alpha}-\varepsilon_\alpha,q^{a_\beta}-\varepsilon_\beta)$
and $s=(\gcd(q-1,2))^{t-1}$. Now

\begin{eqnarray}\label{eq1}\nonumber
L=\lcm_{i=1}^t\{q^{a_i}-\varepsilon_i\}&\leq&\frac{1}{rs}\prod_{i\in
  I}(q^{a_i}+1)\prod_{i\notin I}(q^{a_i}-1)\leq\frac{1}{rs} 
\prod_{i\in I}q^{a_i}\prod_{i\in  I}\left(1+\frac{1}{q^{a_i}}\right)\prod_{i\notin
  I}q^{a_i}\\
&=&\frac{q^d}{rs}\prod_{i\in  I}\left(1+\frac{1}{q^{a_i}}\right)\leq
\frac{q^d}{rs}\prod_{k\in \mathbb{N}}\left(1+\frac{1}{q^k}\right). 
\end{eqnarray}
Since $\log(1+x)\leq x$ for $x\geq 0$, we have
\begin{eqnarray*}
\log\left(\prod_{k\in\mathbb{N}}\left(1+\frac{1}{q^k}\right)\right)
&=&\sum_{k\in\mathbb{N}}\log\left(1+\frac{1}{q^k}\right)\leq\sum_{k\in\mathbb{N}}\frac{1}{q^k}=\frac{1}{q-1}. 
\end{eqnarray*}
Thus $L\leq (q^d/rs)\exp(1/(q-1))$. If $r\geq 2$, then 
\[ 
\frac{\exp(1/(q-1))}{r}\leq \frac{\exp(1/(q-1))}{2}\leq \frac{1}{2}+\frac{1}{q-1}<1+\frac{1}{q-1}=\frac{q}{q-1} 
\] 
(the second inequality follows from the inequality $\exp(y)\leq 1+2y$, which is valid for $0\leq y\leq 1$), and 
hence $L\leq q^{d+1}/s(q-1)$ and the result follows.  

Thus we may assume that $q^{a_\alpha}-\varepsilon_\alpha$ and $q^{a_\beta}-\varepsilon_\beta$ are coprime, for 
distinct $\alpha, \beta\in \{1,\ldots,t\}$. In  particular, $q$ is even and so $s=1$. 
Consider distinct $\alpha, \beta\in I$. A direct computation shows that  $q^{a_\alpha}+1$ and $q^{a_\beta}+1$ 
have a non-trivial common factor if and only if $(a_\alpha)_2=(a_\beta)_2$. Thus in particular, for each $k\geq 0$, 
there is at most one $i\in I$ with $(a_i)_2=2^k$. From~\eqref{eq1}, we have
\begin{eqnarray}\label{eq2}
L&\leq&q^d\prod_{i\in  I}\left(1+\frac{1}{q^{a_i}}\right)\leq
q^d\prod_{k\geq 0}\left(1+\frac{1}{q^{2^k}}\right)
\end{eqnarray}
(where in the last inequality we use the fact that if $2^k = (a_i)_2$, then $1+1/q^{a_i}\leq 1+1/q^{2^k}$). By expanding the infinite product on the right hand side of~\eqref{eq2}, we see that
\[ \prod_{k\geq 0}\left(1+\frac{1}{q^{2^k}}\right)=\sum_{r\geq 0}\frac{1}{q^r}=\frac{q}{q-1} \]
and the lemma is proved.
\end{proof}


In the remainder of this section the vector space $V$ admits a non-degenerate form or quadratic 
form of classical type which is preserved up to a scalar multiple by the preimage in $\GL_d(q^\delta)$ 
of the group $G$.  We frequently make use of a theorem of 
B. Huppert~\cite[Satz 2]{H}, which we apply to semisimple elements $\overline{s}\in G$ that preserve the form. 
Such elements generate a subgroup acting completely reducibly on $V$, and by Huppert's Theorem, $V$ 
admits an orthogonal decomposition of the following form which gives finer information than we 
had in Notation~\ref{not1}:
\begin{eqnarray}\label{eq:decomp}
V&=&V_+\perp V_-\perp ((V_{1,1}\oplus V_{1,1}')\perp \cdots \perp (V_{1,m_1}\oplus V_{1,m_1}'))\perp \cdots \\
&&\perp ((V_{r,1}\oplus V_{r,1}')\perp \cdots \perp (V_{r,m_r}\oplus V_{r,m_r}'))\nonumber\\
&&\perp (V_{r+1,1}\perp \cdots \perp V_{r+1,m_{r+1}})
\perp\cdots \perp (V_{t',1}\perp \cdots \perp V_{t',m_{t'}}) \nonumber
\end{eqnarray}
where $V_{+}$ and $V_{-}$ are the  eigenspaces of $\overline{s}$ for the eigenvalues 
$1$ and $-1$, of dimensions $d_+$ and $d_-$, respectively (note $V_\pm$ is non-degenerate if 
$d_\pm>0$ and we set $d_-=0$ if $q$ is even),
and each  $V_{i,j}$ is an irreducible $\mathbb{F}_{q^\delta}\langle \overline{s}\rangle$-submodule.
Moreover for $i=r+1,\ldots,t'$, $V_{i,j}$ is non-degenerate of dimension $2d_i/\delta$  and $\overline{s}$ 
induces an element $y_{i,j}$ of order dividing $q^{d_i}+1$ on $V_{i,j}$ (in the unitary case $\delta=2$ and 
the dimension $d_i$  is odd). 
For $i=1,\ldots,r$, $V_{i,j}$ and $V_{i,j}'$ are totally isotropic  of dimension $d_i/\delta$ (here $d_i$ is even if $\delta=2$), 
$V_{i,j}\oplus V_{i,j}'$ is non-degenerate, and $\overline{s}$ 
induces an element $y_{i,j}$ of order dividing $q^{d_i}-1$ on $V_{i,j}$ while
inducing the adjoint representation $(y_{i,j}^{-1})^{tr}$ on  $V_{i,j}'$ (where 
$x^{tr}$ denotes the transpose of the matrix $x$). For our claims about the 
orders of the $y_{ij}$, we also refer to~\cite{BC,KS} 
for some standard facts on the structure of the maximal tori of the fnite classical groups.  

We denote by $\CSp_{2m}(q)$ the conformal symplectic group, that is, the elements of $\GL_{2m}(q)$ preserving 
a given symplectic form up to a scalar multiple. Also $\PCSp_{2m}(q)$ denotes the projection of $\CSp_{2m}(q)$ 
in $\PGL_{2m}(q)$. From~\cite[Table~$5$, page~xvi]{ATLAS}, we have $|\PCSp_{2m}(q):\PSp_{2m}(q)|=\gcd(2,q-1)$. 
In the rest of this section, by abuse of notation, we write $p^{\lceil\log_p(0)\rceil}=1$.

\begin{lemma}\label{cor:PSp}
$\meo(\PCSp_{2m}(q))\leq q^{m+1}/(q-1)$.
\end{lemma}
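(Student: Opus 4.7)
My plan is to take $g\in\PCSp_{2m}(q)$ and lift it to $\bar g\in\CSp_{2m}(q)$. Writing the Jordan decomposition $\bar g=\bar s\bar u = \bar u\bar s$ with $\bar s$ semisimple and $\bar u$ unipotent, uniqueness of the Jordan decomposition together with the fact that $\Z{\CSp_{2m}(q)}$ consists of scalars (of order prime to $p$) shows that $\bar g^k$ is a scalar exactly when $\bar u^k=I$ and $\bar s^k$ is a scalar. Combined with $\gcd(|\bar s|,|\bar u|)=1$ this yields the factorisation $|g|=|\bar u|\cdot|\bar s|_{\PCSp}$, where $|\bar s|_{\PCSp}$ denotes the smallest $k\geq 1$ for which $\bar s^k$ is a scalar.

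After rescaling $\bar g$ by an element of $\mathbb{F}_q^\times$ (which leaves $g$ unchanged) I would assume that $\bar s$ preserves the symplectic form, hence $\bar s\in\Sp_{2m}(q)$, and apply Huppert's theorem in the form~\eqref{eq:decomp}. This decomposes $V$ as the orthogonal sum of the $\pm 1$-eigenspaces $V_\pm$ of even dimensions $d_\pm$ and, for each of $t'$ distinct non-trivial Huppert types, a homogeneous component described by multiplicity $m_i$, base dimension $d_i$, and sign $\epsilon_i\in\{\pm 1\}$. Writing $n=\sum_{i=1}^{t'}d_i$, the dimension equation $d_++d_-+2\sum_{i=1}^{t'}d_im_i=2m$ gives $m-n=d_+/2+d_-/2+\sum_{i=1}^{t'}(m_i-1)d_i\geq 0$.

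For the semisimple part, since $\bar s$ acts by Galois-conjugate scalars on the copies within each Huppert component, $|\bar s|$ divides $\lcm\{q^{d_i}-\epsilon_i : 1\leq i\leq t'\}$ (the factor $2$ from $V_-$ being absorbed when $q$ is odd). By Lemma~\ref{tori} applied to the partition $(d_1,\ldots,d_{t'})$ of $n$, this lcm is at most $q^{n+1}/(q-1)$, and hence $|\bar s|_{\PCSp}\leq q^{n+1}/(q-1)$. For the unipotent part, the centraliser of $\bar s$ in $\Sp_{2m}(q)$ decomposes as $\Sp(V_+)\times\Sp(V_-)\times \prod_i\GL_{m_i}(q^{d_i})\times\prod_j\U_{m_j}(q^{d_j})$, and $\bar u$ splits into commuting unipotent factors. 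By Lemma~\ref{uni}, each factor has order at most $p^{\lceil\log_p(d_\pm)\rceil}$ or $p^{\lceil\log_p(m_i)\rceil}$. Using that $d_\pm$ are even (so $q^{d_\pm/2}$ is a power of $p$ that is at least $d_\pm$, giving $p^{\lceil\log_p(d_\pm)\rceil}\leq q^{d_\pm/2}$) and that $2^{m_i-1}\geq m_i$ (so $p^{\lceil\log_p(m_i)\rceil}\leq q^{(m_i-1)d_i}$), one deduces $|\bar u|\leq q^{m-n}$. Multiplying the two estimates yields $|g|\leq q^{n+1}/(q-1)\cdot q^{m-n}=q^{m+1}/(q-1)$.

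The delicate point will be justifying the reduction to $\bar s\in\Sp_{2m}(q)$ in the second paragraph when the similitude character of $\bar g$ is a non-square in $\mathbb{F}_q^\times$, a situation that can only arise for $q$ odd. In that case no rescaling by $\mathbb{F}_q^\times$ places $\bar s$ in the symplectic group, and one must either pass to the quadratic extension $\mathbb{F}_{q^2}$ (where every element of $\mathbb{F}_q^\times$ is a square) and check that the torus estimate supplied by Lemma~\ref{tori} survives the extension, or argue directly using that $\bar g^2\in\mathbb{F}_q^\times\cdot\Sp_{2m}(q)$, applying Lemma~\ref{silly1} to the relevant torus to supply the extra factor of $2$ that compensates for having squared.
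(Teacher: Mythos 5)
Your outline for elements of $\PSp_{2m}(q)$ is essentially the paper's argument, packaged more cleanly: Jordan decomposition, Huppert's orthogonal decomposition, Lemma~\ref{tori} for the semisimple part and Lemma~\ref{uni} for the unipotent part, with the product $\frac{q^{n+1}}{q-1}\cdot q^{m-n}$ replacing the paper's step-by-step ``replace $g$ by $g'$'' reductions. Two caveats there. First, your intermediate claim $|\bar s|_{\PCSp}\leq q^{n+1}/(q-1)$ fails when $t'=0$, $q$ is odd and both $V_+$ and $V_-$ are nonzero: then $n=0$, the lcm is over the empty set, there is nothing for the order-$2$ action on $V_-$ to be ``absorbed'' into, and $|\bar s|_{\PCSp}=2>q/(q-1)$. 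The final bound still holds in that configuration because $\max(d_+,d_-)\leq 2m-2$ forces $|\bar u|$ well below $q^m$, but this needs to be said (the paper's version of this step even has an exception at $(m,q)=(2,3)$ requiring a direct check of $\PCSp_4(3)$). Second, and this is the genuine gap: the case $g\in\PCSp_{2m}(q)\setminus\PSp_{2m}(q)$ with the similitude character a nonsquare (so $q$ odd) is not a ``delicate point to be justified later''--it is roughly half of the paper's proof, and neither of your two sketched fixes goes through as stated.

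Concretely: passing to $\mathbb{F}_{q^2}$ replaces the torus bound $q^{n+1}/(q-1)$ by $q^{2(n+1)}/(q^2-1)$, which is far too large, and the eigenvalues of the rescaled semisimple part no longer obviously have orders dividing $q^{d_i}\pm1$. The alternative of working with $g^2\in\PSp_{2m}(q)$ and recovering a factor of $2$ from Lemma~\ref{tori} (which does give $q^{m+1}/(2(q-1))$ when $q$ is odd and $t'\geq2$) breaks down precisely when the decomposition of $\bar g^2$ has $t'\leq 1$: for instance if $\bar g^2$ is regular semisimple of order $q^m+1$, then $2|g^2|=2(q^m+1)>q^{m+1}/(q-1)$ for every odd $q$. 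The paper closes these cases not by improving the numerical bound but by an eigenvalue-multiplicity argument: using $\bar g^{T}J\bar g=\mu J$ it shows that in each problematic configuration the multiplicity pattern of the eigenvalues of $\bar g$ forces $\mu=1$, i.e.\ $g\in\PSp_{2m}(q)$ after all, a contradiction (together with the direct observation $\bar g^{\,q^m\pm1}=\pm I_{2m}$ in the regular semisimple case). Some argument of this kind is unavoidable, and your proposal does not contain it.
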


\begin{proof}
Using \Cref{cor:PGL} and the fact that $\PCSp_2(q) \cong \PGL_2(q)$, we may assume that $m \ge 2$.
  Let $g$ be an element of $\PCSp_{2m}(q)$ and write $g=su=us$ with $s$ semisimple and $u$ unipotent. We use 
Notation~\ref{not1} for the element $s$. First suppose that $g \in \PSp_{2m}(q)$, and let 
$\overline{g}, \overline{s}, \overline{u}\in\Sp_{2m}(q)$ correspond to $g, s, u$, respectively. Consider the orthogonal $\overline{s}$-invariant 
decomposition of $V$ given by \eqref{eq:decomp} (and note that in this case $\delta=1$).  
Here $V_+$ and $V_{-}$ have even dimension, and we write $2m_+ :=\dim V_+$, $2m_- :=\dim V_-$.
Note that, for $1\leq i\leq r$, $V_{i,j}$ and $V_{i,j}'$ are isomorphic $\mathbb{F}_q\langle 
\overline{s}\rangle$-modules if and only if $y_{i,j}$ acts as the multiplication by $1$ or $-1$ 
on $V_{i,j}$, and by definition of $V_\pm$ this is not the case; thus $V_{i,j}$ and $V_{i,j}'$ 
are non-isomorphic.

Now $m=m_+ + m_- + m_1 d_1 +\dots + m_{t'}d_{t'}$, and by the information from \eqref{eq:decomp}
on the orders of the $y_{i,j}$, and the result in Proposition~\ref{more refined} (using
the notation from Notation~\ref{not1}) about the order of $\overline{u}$,   
we see that the order of $g$ is at most
%
\begin{equation} \label{e:PCSporder}
\begin{split}
\lcm_{i=1}^r\{q^{d_i}-1\}&\cdot\lcm_{i=r+1}^{t'}\{q^{d_i}+1\}\cdot \max\{p^{\lceil \log_p(2m_{\pm})\rceil},p^{\lceil \log_p(m_i)\rceil}\mid i=1,\ldots,t'\}.
\end{split}
\end{equation}
%
Using Lemma~\ref{silly2}, for $i=1,\ldots,r$, we see that by replacing the action of 
$g$ on $(V_{i,1}\oplus V_{i,1}')\oplus \cdots\oplus (V_{i,m_i}\oplus V_{i,m_i}')$ with 
the action given by a semisimple element of order $q^{d_im_i}-1$ (and so having only 
two totally isotropic irreducible $\mathbb{F}_q\langle\overline{s}\rangle$-submodules), 
we obtain an element  $g'$ such that $|g|$ divides $|g'|$ and  $m_i=1$. In particular, 
replacing $g$ by $g'$ if necessary, we may assume that $g=g'$. 
With a similar argument, for those $i\in \{r+1,\ldots,t'\}$ with $m_i$ odd and 
$(p,d_i,m_i,f)\neq (2,1,3,1)$, we may assume that $m_i=1$. 
Also, applying again Lemma~\ref{silly2}, for $i\in \{r+1,\ldots,t'\}$, we may 
assume that if $m_i$ is even, then $(d_i,m_i,f)=(1,2,1)$. 

Suppose that, for some $i_0\in \{r+1,\ldots,t'\}$, we have $(p,d_{i_0},m_{i_0},f)=(2,1,3,1)$. 
The element $g$ induces on $W:=V_{i_0,1}\perp V_{i_0,2}\perp V_{i_0,3}$ an element of order 
dividing $(q+1)p^{\lceil \log_p(3)\rceil}=2^2\cdot 3$. 
Let $g'$ be the element acting as $g$ on $W^\perp$, inducing an element of order $q+1$ 
on $V_{i_0,1}$ and inducing a regular unipotent element on $V_{i_0,2}\perp V_{i_0,3}$. 
Now, $g'$ induces on $W$ an element of order $(q+1)p^{\lceil\log_p(4)\rceil}=2^2\cdot 3$. 
Therefore $|g|=|g'|$ and so, we may replace $g$ by $g'$ (note that in doing so the dimension 
of $V_+$ increases by $2$ and $m_{i_0}$ decreases from 3 to 1). 
In particular, we may assume that $m_i=1$ for each $i\in \{r+1,\ldots,t'\}$ with $m_i$ odd.

Suppose that, for some $i_0\in \{r+1,\ldots,t'\}$, we have $(d_{i_0},m_{i_0},f)=(1,2,1)$. 
The element $g$ induces on $W=V_{i_0,1}\perp V_{i_0,2}$ an element of order dividing 
$(p+1)p^{\lceil \log_p(2)\rceil}=(p+1)p$. Let $g'$ be the element acting as $g$ on 
$W^\perp$, inducing an element of order $p+1$ on $V_{i_0,1}$ and inducing an element 
of order $p$ on $V_{i_0,2}$. Now, $g'$ induces on $W$ an element of order $(p+1)p$. 
Therefore $|g|=|g'|$ and so, replacing $g$ by $g'$ if necessary, we may assume that 
$m_i=1$, for each $i\in \{r+1,\ldots,t'\}$. Thus $m=m_+ + m_- + d_1 +\dots + d_{t'}$.

Now, using Lemma~\ref{tori}, we see that the element $g$  has order at most 
\begin{eqnarray}  \label{e:b}
&&
\lcm_{i=1}^r\{q^{d_i}-1\}\cdot\lcm_{i=r+1}^{t'}\{q^{d_i}+1\}\cdot\max\{p^{\lceil \log_p(2m_+)\rceil},p^{\lceil \log_p(2m_-)\rceil}\}\\
&\leq& 
\frac{q^{m+1-m_+-m_-}}{q-1}\max\{p^{\lceil \log_p(2m_+)\rceil},p^{\lceil \log_p(2m_-)\rceil}\}\leq \frac{q^{m+1}}{q-1} \notag
\end{eqnarray}
(where the last inequality follows from an easy computation). This proves the result for elements 
$g\in \PSp_{2m}(q)$.  If $q$ is even then $\PCSp_{2m}(q) = \PSp_{2m}(q)$, and the proof is complete.
Thus we may assume that $q$ is odd, and in this case, by Lemma~\ref{tori}, 
the upper bound is reduced to $q^{m+1}/(2(q-1))$ if $t'\geq2$.

We must consider elements $g\in \PCSp_{2m}(q)\setminus\PSp_{2m}(q)$.
Now $g^{2} \in \PSp_{2m}(q)$ and we have just shown that $|g^2| \le q^{m+1}/(2(q-1))$ if the
parameter $t'$ for $g^2$ is at least $2$, and hence in this case $|g|\leq q^{m+1}/(q-1)$.
Thus we may assume that $t'\in\{0,1\}$.
If $t'=0$ then 
\[|g^2| \le \max\{p^{\lceil \log_p(2m_+)\rceil},p^{\lceil \log_p(2m_-)\rceil}\}\leq p^{\lceil \log_p(2m)\rceil} \le q^{m+1}/2(q-1),\]
 where the last inequality holds unless $(m,q)=(2,3)$ (this follows from a direct computation). We verify directly the claim of the lemma for $\PCSp_4(3)$. Therefore we may assume 
that the parameter $t'=1$ for $g^2$.

In this case the parameters for $g^2$ satisfy $m=m_+ + m_- + d_1$. If $m_+=m_-=0$ then 
$\overline{g}^2$ is semisimple with eigenvalues $\lambda, \lambda^{-1}, \lambda^{q}, \lambda^{-q}, 
\ldots ,\lambda^{q^{m-1}}, \lambda^{-q^{m-1}}$, where $\lambda^{q^{m}\pm 1}=1$. In particular, 
$\overline{g}^{q^{m}\pm 1} = \pm I_{2m}$ and so $g$ has order at most $q^{m}+1$, which is less 
than $q^{m+1}/(q-1)$. Thus we may assume that $m_++m_->0$. Now \eqref{e:b} gives 
$|g^2|\leq (q^{d_1}+1) \max\{p^{\lceil \log_p(2m_+)\rceil},p^{\lceil \log_p(2m_-)\rceil}\}$. 
To bound the right hand side, we may assume that $m_-=0$ and $m=d_1+m_+$.  A direct computation 
shows that, since $q$ is odd, this bound is less than $q^{m+1}/2(q-1)$ (and hence $|g|\leq 
q^{m+1}/(q-1)$) when $m_+ \ge 2$ unless 
$(q,m_+)=(3,2)$ and $g^2$ has order $9(3^{m-2}+1)$.
 If $m_+=1$ then either $\overline{g}^2$ is semisimple and has order at most $q^{m-1}+1$, 
which is less than $q^{m+1}/2(q-1)$, or $\overline{g}^{2}=J_2+h$ where $h$ has order dividing 
$q^{m-1}\pm 1$. The eigenvalues of $\overline{g}^2$ are therefore 
 $\lambda_1, \ldots ,\lambda_{2m-2}$, with each $\lambda_i \ne \pm 1$ and all distinct,
  and $1$ with algebraic multiplicity $2$. The eigenvalues of $\overline{g}$ 
are therefore $a$, $a$, $\nu_1$, 
$\ldots$, $\nu_{2m-2}$ where $a= \pm 1$ and each $\nu_i^2= \lambda_i$; and since 
$\overline{g}$ is not semisimple, the eigenvalue $a$ must have algebraic multiplicity $2$. 
However $\overline{g}$ is a 
similarity  with respect to the skew-symmetric form $J$; that is $\overline{g}^{T}J\overline{g} = 
\mu J$ for some $\mu \in \mathbb{F}_{q}$ and therefore $J^{-1}\overline{g}^{T} J = \mu \overline{g}^{-1}$. 
In particular, $\overline{g}$ and $\mu \overline{g}^{-1}$ 
are $\GL_n(q)$-conjugate and have the same eigenvalues with the same algebraic multiplicities. So since 
$a$ is an eigenvalue of $\overline{g}$ with algebraic multiplicity $2$,  so is $a \mu$ and we must have 
$\mu=1$. But then $g \in \PSp_{2m}(q)$, contradicting our assumption.
 Finally suppose that $(q,m_+)=(3,2)$ and $g^2$ has order $9(3^{m-2}+1)$. 
Then the eigenvalues of $\overline{g}^{2}$ are $1,\lambda_1,\ldots, \lambda_{2m-4}$, 
where $1$ has algebraic multiplicity $4$, the $\lambda_i$ are distinct and $\lambda_i 
\ne \pm 1$. It follows that the eigenvalues of $\overline{g}$  are $a, \nu_1,\ldots, \nu_{2m-4}$,  
where $a =\pm 1$ has algebraic multiplicity $4$, and each $\nu_i^{2} = \lambda_i$ (since $9$ divides $|g|$). 
Again, since  $\overline{g}^{T}J\overline{g} = \mu J$,  it follows that $a \mu$ is also an eigenvalue 
of $\overline{g}$ with algebraic multiplicity $4$, and therefore $\mu =1$ and $g \in \PSp_{2m}(q)$, 
which is a contradiction.
\end{proof}

\begin{remark}\label{rem:2}{\rm
We note that Corollary~\ref{cor:PSp} is, for $q$ even, asymptotically the best possible. 
Indeed, let  $q$ be a $2$-power, let $k$ be a positive integer and let $s$ be a semisimple 
element of  $\PSp_{2^{k+1}-2}(q)\cong\Sp_{2^{k+1}-2}(q)$. 
Suppose that the natural $\mathbb{F}_q\langle\overline{s}
\rangle$-module $V$ decomposes as $V_1\perp \cdots \perp V_k$ with $\dim_{\mathbb{F}_q} V_i=2^i$ 
and with $\overline{s}$ inducing on $V_i$ an element of order $q^{2^{i-1}}+1$. 
(This is the decomposition of \eqref{eq:decomp} for $\overline s$ where we have 
$V_\pm=0, r=0, t'=k$ and for each $i$, $m_i=1, d_i=i$.) 
Now, we have  
\begin{eqnarray*}
|s|&=&\lcm\{q+1,q^2+1,q^{2^2}+1,\ldots,q^{2^{k-1}}+1\}=(q+1)(q^2+1)\cdots (q^{2^{k-1}}+1)\\
&=&q^{2^k-1}\prod_{i=0}^{k-1}\left( 1+\frac{1}{q^{2^i}}\right),
\end{eqnarray*} 
which approaches $q^{2^{k}}/(q-1)$ as $k$ tends to infinity.

Moreover, the extra care that we used in handling the subspaces $V_+$ and $V_-$ 
in the proof of Corollary~\ref{cor:PSp} may seem ostensibly artificial and unnecessary. 
However we remark  that the maximum order of an element $g$ of $\PSp_{36}(2)$ is 
$2^3\cdot(2+1)\cdot(2^2+1)\cdot (2^4+1)\cdot (2^8+1)$ (see~\cite[p.~$808$]{KS}). 
Such an element $g$ can be chosen to be of the form $su=us$ (with $u$ unipotent and 
$s$ semisimple), where the element $\overline{u}$ fixes a $30$-dimensional subspace pointwise
and acts as a regular unipotent element on a $6$-dimensional subspace $W$, and where 
the element $\overline{s}$ acts trivially on $W$. In particular, this shows that the 
contribution of $V_{+}$ and $V_-$ are sometimes essential in achieving the maximum 
element order of $\PSp_{2m}(q)$.}
\end{remark}

The following result is a consequence of Lemma~\ref{cor:PSp} and results in \cite{KS}.

\begin{corollary}\label{cor:PO}Let $q=p^f$ with $p$ a prime. For $m\geq 3$, we have 
$\meo(\PGO_{2m+1}(q))\leq q^{m+1}/(q-1)$ (with $q$ odd), and for $m\geq 4$ and $\varepsilon\in \{+,-\}$, 
we have $\meo(\PGO_{2m}^\varepsilon(q))\leq q^{m+1}/(q-1)$.
\end{corollary}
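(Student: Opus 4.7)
The plan is to reduce each case to the symplectic bound of Lemma~\ref{cor:PSp}, either through a classical isomorphism or through an orthogonal-decomposition argument that mirrors the proof of that lemma. The structural input from \cite{KS} will consist of the torus structure of the classical groups in characteristic $p$ and explicit orders of maximal tori.

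\medskip
For the odd-dimensional case (so $q$ odd), I would invoke the well-known exceptional isomorphism $\Omega_{2m+1}(q)\cong \PSp_{2m}(q)$, which extends to an isomorphism $\PGO_{2m+1}(q)\cong\PCSp_{2m}(q)$ identifying the full similarity/conformal groups (this is standard, cf.~\cite{KL}). Under this identification, the bound $\meo(\PCSp_{2m}(q))\leq q^{m+1}/(q-1)$ of Lemma~\ref{cor:PSp} transfers immediately to $\PGO_{2m+1}(q)$. This disposes of the first assertion without any further work.

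\medskip
For the even-dimensional case in even characteristic, I would use the containment $\GO^{\varepsilon}_{2m}(q)\leq \Sp_{2m}(q)$ that holds because a quadratic form in characteristic two polarises to a symplectic form. Passing to the projective groups (where $\PCSp_{2m}(q)=\PSp_{2m}(q)$ since there are no non-trivial scalars), we obtain $\PGO^{\varepsilon}_{2m}(q)\leq \PCSp_{2m}(q)$, and Lemma~\ref{cor:PSp} again yields the bound directly.

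\medskip
The main work, and the step I expect to be the principal obstacle, is the even-dimensional case with $q$ odd, where no such embedding into a symplectic group is available. Here I would run an argument parallel to the proof of Lemma~\ref{cor:PSp}. Write $g=su=us\in\PGO^{\varepsilon}_{2m}(q)$ with $s$ semisimple and $u$ unipotent, lift to $\overline{s},\overline{u}\in \GO^{\varepsilon}_{2m}(q)$ (times scalars), and apply Huppert's orthogonal decomposition \eqref{eq:decomp} to $\overline{s}$. The summands $V_+,V_-$ are now non-degenerate orthogonal spaces of dimensions $d_\pm$ (not necessarily even, but their sum has prescribed parity according to $\varepsilon$), while the remaining summands contribute cyclic torus factors $q^{d_i}\mp 1$ with $\sum d_i + (d_++d_-)/2 \le m$; these torus orders are exactly those listed in \cite{KS}. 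The unipotent contribution is bounded by Proposition~\ref{more refined}, and Lemma~\ref{tori} controls the lcm of the torus orders. After the same normalisation manoeuvres used in the proof of Lemma~\ref{cor:PSp} (collapsing isomorphic homogeneous components and trading small-dimensional unipotent blocks against $V_\pm$), the overall order is bounded by an expression of the shape $\frac{q^{m+1-(d_++d_-)/2}}{q-1}\cdot p^{\lceil\log_p(\max(d_+,d_-))\rceil}$, which is at most $q^{m+1}/(q-1)$ by the same elementary computation used in \eqref{e:b}. The delicate point, as in the symplectic case, is handling elements of $\PGO^{\varepsilon}_{2m}(q)\setminus \POmega^{\varepsilon}_{2m}(q)$: one squares into the index-bounded normal subgroup, notes that the saving factor of two in Lemma~\ref{tori} absorbs this doubling whenever the parameter $t'\geq 2$, and treats the small residual cases ($t'\leq 1$) by direct eigenvalue analysis using that a similarity $\overline{g}^{\mathrm{T}}Q\overline{g}=\mu Q$ forces $\overline{g}$ and $\mu\overline{g}^{-1}$ to be $\GL$-conjugate, which pins down the scalar $\mu$ on multiplicity grounds.
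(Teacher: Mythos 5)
Your reduction of the odd-dimensional case rests on a false isomorphism, and this is a genuine gap. For $q$ odd and $m\geq 3$ the groups $\Omega_{2m+1}(q)$ and $\PSp_{2m}(q)$ are \emph{not} isomorphic: they are the classical family of pairs of non-isomorphic simple groups of equal order. The exceptional isomorphisms $\Omega_3(q)\cong\PSL_2(q)$ and $\Omega_5(q)\cong\PSp_4(q)$ stop at $m=2$, and the identification of types $\BB_m$ and $\CC_m$ as abstract groups holds only in characteristic~$2$. Since the corollary asserts the odd-dimensional bound precisely for $m\geq 3$ and $q$ odd, the first step of your argument proves nothing in the range required. (Your even-characteristic step is fine and is exactly what the paper does: for $q=2^f$ one has $\PGO^{\varepsilon}_{2m}(q)\leq\PCSp_{2m}(q)$ and Lemma~\ref{cor:PSp} applies.)

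For all the odd-$q$ cases --- both $\PGO_{2m+1}(q)$ and $\PGO^{\varepsilon}_{2m}(q)$ --- the paper does not run any new decomposition argument: it simply compares $q^{m+1}/(q-1)$ with the explicit maximum element orders of the orthogonal groups determined by Kantor and Seress in~\cite{KS}. Your proposed Huppert-decomposition argument for $\PGO^{\varepsilon}_{2m}(q)$ with $q$ odd is plausible in outline but has its own unresolved difficulty at the coset analysis: unlike $\PCSp_{2m}(q)/\PSp_{2m}(q)$, which has order $2$, the quotient $\PGO^{\varepsilon}_{2m}(q)/\POmega^{\varepsilon}_{2m}(q)$ can have order $4$ for odd $q$ (determinant and spinor norm), so ``squaring into the index-bounded normal subgroup'' does not place you in the group where the torus estimates were established, and the reflection cosets would need separate treatment. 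Both the odd-dimensional and the even-dimensional odd-$q$ cases are settled at once by the citation of~\cite{KS}, which is the route the paper takes.
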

\begin{proof}
If $q$ is odd, then the result follows by comparing $q^{m+1}/(q-1)$ with the maximum 
element order of the orthogonal groups obtained in~\cite{KS}. Now, assume that $q$ is 
even. It is well-known that orthogonal groups of characteristic $2$ are subgroups of 
the symplectic groups, that is, $\PGO_{2m}^\varepsilon(q)\leq \PCSp_{2m}(q)$, for 
$\varepsilon\in \{+,-\}$ (see~\cite[Section~$5$]{BC} or~\cite[Table~$3.5.C$]{KL}). It 
follows from Lemma~\ref{cor:PSp} that $\meo(\PGO_{2m}^\varepsilon(q))\leq q^{m+1}/(q-1)$, for $\varepsilon\in \{+,-\}$.
\end{proof}

The next two lemmas will be used for computing the maximum element order 
for unitary groups.

\begin{lemma}\label{silly3}
Let $(b_1,\ldots,b_t)$ be a partition of $d$ and let $q$ be a prime power. If $t \ge 2$, then 
$\lcm_{i=1}^t\{q^{b_i}-(-1)^{b_i}\}\leq q^{d-1}-(-1)^{d-1}$. Moreover 
$(q^{d}-(-1)^d)/(q+1)\leq q^{d-1}-(-1)^{d-1}$.
\end{lemma}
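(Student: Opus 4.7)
The plan is to extract and exploit the fact that $q+1$ is a common divisor of all the terms $q^{a}-(-1)^{a}$ for $a\ge 1$: indeed, if $a$ is odd this is immediate, and if $a$ is even then $q^{2}-1$ divides $q^{a}-1$. Since any two of the numbers $q^{b_{i}}-(-1)^{b_{i}}$ therefore have gcd at least $q+1$, iterating the identity $\lcm(A,B)=AB/\gcd(A,B)$ and keeping track of the fact that $q+1$ still divides the running $\lcm$ gives
\[
\lcm_{i=1}^{t}\bigl\{q^{b_{i}}-(-1)^{b_{i}}\bigr\}\le \frac{1}{(q+1)^{t-1}}\prod_{i=1}^{t}\bigl(q^{b_{i}}-(-1)^{b_{i}}\bigr).
\]

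The heart of the argument would be the following two-term \emph{merging inequality}: for all integers $a,b\ge 1$,
\[
\bigl(q^{a}-(-1)^{a}\bigr)\bigl(q^{b}-(-1)^{b}\bigr)\le (q+1)\bigl(q^{a+b-1}-(-1)^{a+b-1}\bigr).
\]
I would handle this by case analysis on the parities of $a$ and $b$; in each of the four cases, after multiplying both sides out, one obtains an inequality of the shape $q^{x}+q^{y}\le q^{z}+q^{w}$ with $x+y=z+w$ and $\max(z,w)\ge\max(x,y)$, which is elementary (WLOG $a\le b$, and compare powers of $q$). I expect this case analysis to be the main obstacle, because the signs $(-1)^{a},(-1)^{b}$ interact delicately and because the boundary cases $a=1$ or $b=1$ produce equality and must be treated with care.

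Granting the merging inequality, I would apply it $t-1$ times to telescope the product: the $(q+1)$'s cancel pairwise with the $(q+1)^{t-1}$ denominator, and the exponent of $q$ shrinks from $d$ to $d-(t-1)$ after $t-1$ merges. This gives
\[
\lcm_{i=1}^{t}\bigl\{q^{b_{i}}-(-1)^{b_{i}}\bigr\}\le q^{d-t+1}-(-1)^{d-t+1}.
\]
Since $a\mapsto q^{a}-(-1)^{a}$ is strictly increasing on $a\ge 1$ (an immediate check using $q\ge 2$) and $t\ge 2$, the right-hand side is at most $q^{d-1}-(-1)^{d-1}$, finishing the first claim.

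For the ``moreover'' statement (which I read as implicitly requiring $d\ge 2$, since for $d=1$ the inequality reads $1\le 0$), I would compute the difference
\[
(q+1)\bigl(q^{d-1}-(-1)^{d-1}\bigr)-\bigl(q^{d}-(-1)^{d}\bigr)
\]
directly, obtaining $q^{d-1}+q+2$ when $d$ is even and $q^{d-1}-q-2$ when $d$ is odd; the first is obviously positive, and the second is non-negative for $d\ge 3$ and $q\ge 2$ (with equality when $(d,q)=(3,2)$), while $d=2$ gives exact equality. This establishes the ``moreover'' assertion.
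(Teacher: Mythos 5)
Your proof is correct and follows essentially the same route as the paper's: both extract the common factor $q+1$ from the $\lcm$ and reduce everything to the two-term merging inequality $(q^{a}-(-1)^{a})(q^{b}-(-1)^{b})\le(q+1)(q^{a+b-1}-(-1)^{a+b-1})$, which the paper dispatches as ``a direct computation'' inside an induction on $t$ and you make explicit via the parity case analysis. (One trivial slip: for $d=2$ the ``moreover'' difference is $2q+2$, not $0$, which only strengthens your conclusion.)
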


\begin{proof}
For the first part of the lemma, we argue by induction on $t$. Note that $q+1$ divides 
$q^{b_i}-(-1)^{b_i}$ for each $i\in \{1,\ldots,t\}$. If $t=2$, then 
\[ 
\lcm\{q^{b_1}-(-1)^{b_1},q^{b_2}-(-1)^{b_2}\}\leq 
\frac{(q^{b_1}-(-1)^{b_1})(q^{b_2}-(-1)^{b_2})}{q+1}\leq q^{d-1}-(-1)^{d-1} 
\]
(where the last inequality follows from a direct computation). Assume that $t\ge 3$. 
Now, by induction, $\lcm_{i=1}^{t-1}\{q^{b_i}-(-1)^{b_i}\}\leq q^{d-b_t-1}-(-1)^{d-b_t-1}$.  Therefore
\begin{eqnarray*}
\lcm_{i=1}^t\{q^{b_i}-(-1)^{b_i}\}&\leq& \frac{1}{q+1}\left(\lcm_{i=1}^{t-1}\{q^{b_i}-(-1)^{b_i} \}\right)(q^{b_t}-(-1)^{b_t})\\
&\leq& \frac{(q^{d-b_t-1}-(-1)^{d-b_t-1})(q^{b_t}-(-1)^{b_t})}{q+1}\leq q^{d-1}-(-1)^{d-1}
\end{eqnarray*}
(where the last inequality, as before, follows by a direct computation).
The last part of the lemma is immediate.
\end{proof}

\begin{lemma}\label{silly4}Let $d=d_++d_-+e$ with $d_+,d_-,e\geq 0$ and $d\geq 3$, and let $q=p^f$ with $p$ a prime number and $f\geq 1$. Then 
\[(q^{e-1}-(-1)^{e-1})\max\{p^{\lceil\log_p(d_+)\rceil},p^{\lceil\log_p(d_-)\rceil}\}\leq
\left\{
\begin{array}{ll}
q^{d-1}-1&\textrm{if }d \textrm{ is odd and }q>p,\\
(p^{d-2}+1)p&\textrm{if }d \textrm{ is odd and }q=p,\\
q^{d-1}+1&\textrm{if }d \textrm{ is even and }q>2,\\
2^2(2^{d-3}+1)&\textrm{if }d \textrm{ is even and }q=2.\\
\end{array}
\right.
\]
\end{lemma}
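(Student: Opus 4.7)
The plan is to reduce the claimed inequality to an elementary estimate and then verify it by case analysis on the parity of $d$ and the size of $q$. The first step is to use the constraint $d_+ + d_- = d - e$ together with $\max(d_+, d_-) \leq d-e$ to obtain
\[
\max\{p^{\lceil\log_p(d_+)\rceil},\, p^{\lceil\log_p(d_-)\rceil}\} \;\leq\; p^{\lceil\log_p(d-e)\rceil}.
\]
Combined with $q^{e-1}-(-1)^{e-1}\leq q^{e-1}+1$ and the convention $p^{\lceil\log_p(0)\rceil}=1$ from the setup, this reduces the claim to showing
\[
g(e)\;:=\;(q^{e-1}+1)\cdot p^{\lceil\log_p(d-e)\rceil}\;\leq\; R(d,q)
\]
for each admissible $e$, where $R(d,q)$ denotes the right-hand side of the relevant case.

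I would then handle the boundary value $e=d$ directly. Here $d_+=d_-=0$, so the max term is $1$ and the true left-hand side is $q^{d-1}-(-1)^{d-1}$. For $d$ odd this equals $q^{d-1}-1$, matching $R$ exactly when $q>p$ and comfortably below $p^{d-1}+p$ when $q=p$; for $d$ even it equals $q^{d-1}+1$, matching $R$ when $q>2$ and below $2^{d-1}+4$ when $q=2$. So the inequality is tight precisely at this endpoint and fine with room to spare in the two ``small-$q$'' cases.

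For $1\leq e\leq d-1$ I would apply the estimate $p^{\lceil\log_p(d-e)\rceil}\leq p^{d-e}$, valid because $\lceil\log_p n\rceil\leq n$ for $n\geq 1$, to obtain $g(e)\leq (q^{e-1}+1)\,p^{d-e}$. When $q>p$ this expression is dominated by $q^{e-1}\cdot q^{d-e-1}\cdot q=q^{d-1}$ with slack, so the inequality goes through easily in the first and third cases of the statement. The main obstacle is the pair of tight subcases $q=p$ with $d$ odd and $q=2$ with $d$ even, where $R$ differs from $q^{d-1}\pm 1$ by only a small additive term. In those regimes I would examine $g(e)$ for each $e\in\{2,\ldots,d-1\}$, splitting on whether $d-e$ is a $p$-power (the only case in which $p^{\lceil\log_p(d-e)\rceil}=d-e$ and the ceiling contributes no slack); in every such subcase the bound $g(e)\leq R(d,q)$ follows by a direct arithmetic comparison, with the small dimensions $d=3$ or $d=4$ checked separately. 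The resulting case analysis is elementary but requires some care with the rounding in the ceiling function.
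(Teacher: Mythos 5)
Your overall strategy is viable and runs parallel to the paper's: the paper first notes $p^{\lceil\log_p(m)\rceil}\leq p^{m-1}$ for $m\geq 1$, reduces to $d_-=0$ (showing the case $d_-\geq 1$ only produces terms of degree at most $d-3$ in $q$), and then checks the four cases; your reduction to $g(e)=(q^{e-1}+1)p^{\lceil\log_p(d-e)\rceil}$ with the endpoint $e=d$ treated exactly is an equivalent repackaging. However, there are two concrete problems. First, the estimate you rely on, $p^{\lceil\log_p n\rceil}\leq p^{n}$, is too weak, and as a result your claim that the first and third cases ``go through easily'' is wrong for the third case. When $d$ is even and $q=p$ is an odd prime (which is allowed under ``$q>2$''), your majorant is $(q^{e-1}+1)p^{d-e}=q^{d-1}+q^{d-e}>q^{d-1}+1$ for every $e\leq d-1$, so this sub-case is exactly as tight as the two you flag and cannot be dispatched by the displayed domination. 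Even in the first case, the product $q^{e-1}\cdot q^{d-e-1}\cdot q=q^{d-1}$ that you exhibit is not below the target $q^{d-1}-1$; what saves you there is $q\geq p^2$, which you do not invoke. The missing ingredient is precisely the sharper bound $p^{\lceil\log_p n\rceil}\leq p^{n-1}$ (valid for $n\geq 1$ since $n\leq p^{n-1}$), with which every case $1\leq e\leq d-1$ collapses at once to $g(e)\leq q^{d-2}+p^{d-e-1}\leq 2q^{d-2}$, and $2q^{d-2}$ is below the right-hand side in all four cases; without it, your promised ``direct arithmetic comparison'' in the tight regimes must effectively rediscover this inequality value by value.

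Second, you never treat $e=0$, which the hypotheses permit, and it is not vacuous: for $d=3$, $q=p=2$ and $(d_+,d_-)=(3,0)$ the left-hand side is $(2^{-1}-(-1)^{-1})\cdot 2^{\lceil\log_2 3\rceil}=\tfrac{3}{2}\cdot 4=6$, which equals the right-hand side $(2+1)\cdot 2=6$. So $e=0$ is an equality case that must be checked separately (and for $e=0$ with $d\geq 4$ one again needs $\lceil\log_p d\rceil\leq d-2$ rather than the crude $\leq d$). You also drop $e=1$ from your tight-case list $\{2,\ldots,d-1\}$; that one happens to be harmless, but it should be said. In short: the architecture is sound, but the proof as proposed would fail at the third case with $q=p$ and omits a genuine boundary case, both of which are repaired by the single inequality $p^{\lceil\log_p n\rceil}\leq p^{n-1}$ that the paper puts first.
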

\begin{proof}
Note that $p^{\lceil\log_p(m)\rceil}\leq p^{m-1}$, for every integer $m\geq 1$.
Interchanging $d_-$ and $d_+$ if necessary, we may assume that $d_-\leq d_+$.  If $d_-\geq 1$, then 
\[ (q^{e-1}-(-1)^{e-1})\max\{p^{\lceil\log_p(d_+)\rceil},p^{\lceil\log_p(d_-)\rceil}\}\leq (q^{d-d_+-2}-(-1)^{d-d_+-2})p^{\lceil\log_p(d_+)\rceil} \]
and the lemma follows with an easy computation (the polynomial in $q$ 
on the right-hand side has degree at most $d-3$). Thus we may 
assume that $d_-=0$. 
Now, the rest of the proof follows easily by treating separately the four cases listed. 
\end{proof}

Let $f$ be a unitary form. We consider $\Delta/Z$, where $\Delta$ is the subgroup of 
$\GL_d(q^2)$ preserving  $f$ up to a scalar multiple, and $Z\cong Z_{q^2-1}$ is the centre of $\GL_d(q^2)$. 
We claim that $\Delta= \GU_d(q)Z$, where $\GU_d(q)$ is the subgroup of $\GL_d(q^2)$ 
preserving $f$. To see this, note that, if $g\in \GL_d(q^2)$ maps $f$ to $af$ for 
some $a\in\mathbb{F}_{q^2}^*$, then for all $v,w\in V$, we have
$a f(v,w)^q=a f(w,v)$ (since $f$ is unitary), which equals 
$f(wg,vg) =f(vg,wg)^q = a^qf(v,w)^q$, and hence $a^{q}=a$. 
Thus $a\in\FF_q$, so $a=b^{q+1}$ for some $b \in \mathbb{F}_{q^2}$ and 
$g= b (b^{-1}g)\in \GU_d(q)Z$. This proves the claim and thus 
we have $\Delta/Z\cong \GU_d(q)/(\GU_d(q) \cap Z) = \PGU_d(q)$.
For the unitary groups $\PSU_d(q)$ to be simple and different from 
$\PSL_2(q)$, we require $d\geq3$ and $(d,q)\ne(3,2)$. 

\begin{lemma}\label{cor:PGU}
\[
\meo(\mathrm{PGU}_d(q))=\left\{
\begin{array}{lcl}
q^{d-1}-1&&\textrm{if }d \textrm{ is odd and }q>p,\\
(p^{d-2}+1)p&&\textrm{if }d \textrm{ is odd and }q=p,\\
q^{d-1}+1&&\textrm{if }d \textrm{ is even and }q>2,\\
4(2^{d-3}+1)&&\textrm{if }d \textrm{ is even and }q=2.\\
\end{array}
\right.
\]
\end{lemma}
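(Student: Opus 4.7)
The plan is to adapt the strategy of Corollary~\ref{cor:PGL} and Lemma~\ref{cor:PSp} to the unitary setting, using the refined decomposition of \eqref{eq:decomp} with $\delta=2$, together with Lemmas~\ref{silly3} and~\ref{silly4} which have been tailored to this case.

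For the lower bound, I would exhibit, in each of the four cases, an explicit element in $\GU_d(q)$ attaining the claimed order in $\PGU_d(q)$. When $d$ is odd and $q>p$, a regular semisimple element $\bar{s}$ sitting in the maximal torus of $\GU_d(q)$ corresponding to the partition $(d-1,1)$ has order $(q^{d-1}-1)(q+1)$ in $\GU_d(q)$, and by Lemma~\ref{silly1} (applied to the central quotient of order $q+1$) its image in $\PGU_d(q)$ is cyclic of order $q^{d-1}-1$. When $d$ is even and $q>2$, the torus for the partition $(d-1,1)$ has order $(q^{d-1}+1)(q+1)$ and descends to a cyclic group of order $q^{d-1}+1$ in $\PGU_d(q)$. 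For $d$ odd and $q=p$, I would combine a semisimple element of order $q^{d-2}+1$ acting irreducibly on a non-degenerate $(d-2)$-dimensional subspace with a regular unipotent of order $p$ on the orthogonal $2$-dimensional eigenspace $V_+$; the resulting commuting product has order $p(q^{d-2}+1)$ in $\GU_d(q)$, and a check of the central quotient (since $\gcd(p,q+1)$ and the factorisation of $q^{d-2}+1$ work out favourably) yields the same order in $\PGU_d(q)$. For $d$ even and $q=2$, a similar construction combining a semisimple element of order $2^{d-3}+1$ on a non-degenerate $(d-3)$-dimensional subspace with a regular unipotent of order $4=2^{\lceil\log_{2}3\rceil}$ on the orthogonal $3$-dimensional complement $V_+$ gives an element of order $4(2^{d-3}+1)$.

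For the upper bound, let $g\in\PGU_d(q)$ and write $g=su=us$ with $s$ semisimple and $u$ unipotent, so that $|g|=|s|\cdot|u|$. Apply Notation~\ref{not1} with $\delta=2$ and the orthogonal decomposition \eqref{eq:decomp} to a lift $\bar{s}$ in $\GU_d(q)$, producing eigenspaces $V_+$ (dimension $d_+$) and $V_-$ (dimension $d_-$), together with totally isotropic pairs $V_{i,j}\oplus V_{i,j}'$ (for $i\le r$, with $d_i$ even) and non-degenerate irreducible pieces $V_{i,j}$ (for $r<i\le t'$, with $d_i$ odd). By Proposition~\ref{more refined}, the lift $\bar{u}$ centralises $\bar{s}$ and so $|u|\le\max\{p^{\lceil\log_{p}(m_i)\rceil}\}$. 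Applying Lemma~\ref{silly2} just as in the proof of Lemma~\ref{cor:PSp}, each homogeneous component with $m_i>1$ may be replaced, without decreasing $|g|$, by a single irreducible piece of the appropriate type on the same subspace; the two small exceptions $(p,d_i,m_i,f)=(2,1,3,1)$ and $(d_i,m_i,f)=(1,2,1)$ can be handled by transferring dimension into $V_+$ using the identical trick employed in the symplectic proof. After these reductions, $m_i=1$ for every $i\ge 1$, so the only possibility for a non-trivial unipotent is within the $\GU_{d_+}(q)\times\GU_{d_-}(q)$ factor of the centraliser of $\bar{s}$, giving $|u|\le\max\{p^{\lceil\log_{p}(d_+)\rceil},p^{\lceil\log_{p}(d_-)\rceil}\}$.

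At this stage, setting $e=d-d_+-d_-$, the non-trivial part of $\bar{s}$ partitions $e$ into blocks, and the order of $\bar{s}$ divides the lcm of the numbers $q^{d_i}-(-1)^{d_i}$ (treating the $i\le r$ and $i>r$ cases uniformly). Applying Lemma~\ref{silly1} to absorb the central quotient of order $q+1$, and Lemma~\ref{silly3} to the resulting partition of $e$, yields $|s|\le q^{e-1}-(-1)^{e-1}$. Combining with the unipotent bound gives
\[
|g|\le (q^{e-1}-(-1)^{e-1})\cdot\max\{p^{\lceil\log_{p}(d_+)\rceil},p^{\lceil\log_{p}(d_-)\rceil}\},
\]
and Lemma~\ref{silly4} then delivers the four stated upper bounds. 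The main obstacle is the bookkeeping for the central quotient in the passage from $\GU_d(q)$ to $\PGU_d(q)$, together with verifying that the small-parameter exceptions in Lemma~\ref{silly2} transfer faithfully from the symplectic argument to the unitary setting; once these technicalities are cleared, the matching lower bounds in the four regimes confirm sharpness.
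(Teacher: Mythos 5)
Your proposal is correct and follows essentially the same route as the paper's own proof: the same lower-bound constructions (irreducible torus elements for the partition $(d-1,1)$, and a semisimple piece on a non-degenerate subspace combined with a unipotent of order $p^{\lceil\log_p(d_+)\rceil}$ on $V_+$ in the $q=p$ and $q=2$ cases), and the same upper-bound argument via Notation~\ref{not1}, the decomposition \eqref{eq:decomp}, Proposition~\ref{more refined}, the Lemma~\ref{silly2} reduction to $m_i=1$ with the small exceptional parameters absorbed into $V_+$, and finally Lemmas~\ref{silly3} and~\ref{silly4}. The only cosmetic difference is that you invoke Lemma~\ref{silly1} explicitly to account for the central quotient of order $q+1$ (relevant only when $t'=1$), where the paper folds this into the ``Moreover'' clause of Lemma~\ref{silly3}.
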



\begin{proof}
  Let $g$ be an element of $\PGU_{d}(q)$ and write $g=su=us$ with $s$ semisimple and $u$ unipotent. 
If $g=u$ then, by Lemma~\ref{uni}, $|g|\leq p^{\lceil\log_p(d)\rceil}\leq p^{d-1}$ and the result follows. 
Thus we may assume that $s\ne1$.
We use Notation~\ref{not1} for the element $s$ and a corresponding element $\overline{s}\in \GL_d(q^2)$. 
From our remarks above, $\overline{s}=a\overline{r}$ for some $a\in\FF_{q^2}^*$ and $\overline{r}\in 
\GU_d(q)$, and hence the $\overline{r}$-invariant orthogonal decomposition described in \eqref{eq:decomp}
is also $\overline{s}$-invariant. Recall 
that, for $1\leq i\leq r$,  $|y_{ij}|$ divides $q^{d_i}-1$ and $d_i$ is even, while for  
$r< i\leq t'$,  $|y_{ij}|$ divides $q^{d_i}+1$ and $d_i$ is odd (and $t'\geq1$ since $s\ne1$). 
Also the order of $\overline{s}|_{V_\pm}$ 
is 1 if $q$ is even and at most 2 is $q$ is odd, and the dimension 
$d=d_++d_- +d_1m_1+\cdots+d_{t'}m_{t'}$. Thus $|s|$ divides 
$\prod_{i=1}^{t'}(q^{d_i}-(-1)^{d_i})$. Moreover, combining Notation~\ref{not1} and Proposition~\ref{more refined} 
(together with the description of the maximal tori of $\GU_d(q)$~\cite{BC,KS}), we see that the order of $g$ is at most
$$
\lcm_{i=1}^{t'}\{q^{d_i}-(-1)^{d_i}\}\cdot\max\{p^{\lceil \log_p(d_{\pm})\rceil},p^{\lceil \log_p(m_i)\rceil}\mid i=1,\ldots,t'\}.
$$
if $t'>1$, and it is at most
\[ (q^{d_1}-(-1)^{d_1})\cdot\max\{p^{\lceil\log_p(d_{\pm})\rceil},p^{\lceil \log_p(m_1)\rceil}\} \]
if $t'=1$.
%
Using Lemma~\ref{silly2} and arguing exactly as in the proof of Lemma~\ref{cor:PSp}, we see that by replacing  
$g$ if necessary by an element of larger or equal order, we may assume that $m_i=1$ for every 
$i\in \{1,\ldots,t'\}$, with the  
exception of at most two values of $i$ such that $(q,d_i,m_i)=(2,1,3)$ and such that $g$ induces 
an element of order $(q+1)p^{\lceil\log_p(m_i)\rceil}=3 \cdot2^2=12$ on $V_{i,1}\perp V_{i,2}\perp V_{i,3}$.
However, in these exceptional cases we have $q=2$ and the restriction of the element $g$ to 
$V_{i,1}\perp V_{i,2}\perp V_{i,3}$ is an element of $\PGU_3(2)$, modulo scalars, and the   maximum order 
of such elements is $6$ rather than $12$. 
Thus in these cases we have overestimated the order by a factor of $2$; we may replace the
restriction of $g$ to this space by an element inducing an element of order $3$ on $V_{i,1}$
and an element of order 2 on $V_{i,2}\perp V_{i,3}$ (thus increasing the dimension of $V_+$ by 2). 
In this way, even if the exceptional cases occur, we obtain an element attaining the maximum order for which
$m_i=1$ for every $i\in \{1,\ldots,t'\}$. Thus we see that
\[ 
|g| \le 
\begin{cases}
\displaystyle(q^{d-d_+-d_{-}}-(-1)^{d-d_+-d_-})\max\{p^{\lceil\log_p(d_{\pm})\rceil}\} & \text{if $t'=1$;}\\
\lcm_{i=1}^{t'}\{q^{d_i}-(-1)^{d_i}\}\max\{p^{\lceil \log_p(d_\pm)\rceil}\} &\text{if $t' \ge 2$.}
\end{cases}
\] 
  Using Lemma~\ref{silly3}, it follows that in both cases
\begin{eqnarray*}
|g|&\leq &(q^{d-d_+-d_--1}-(-1)^{d-d_+-d_{-}-1})\max\{p^{\lceil \log_p(d_\pm)\rceil}\}
\end{eqnarray*}
and the proof follows in these cases from Lemma~\ref{silly4}. 
%

From the description of the semisimple elements given above it is easy to see 
that $\PGU_d(q)$ contains an element $g$ with $|g|$ achieving the stated value of $\meo(\PGU_d(q))$. For example, when $d$ is odd and $q>p$, it suffices to take 
$g$ a semisimple element of order $q^{d-1}-1$ in the maximal torus of order $(q+1)(q^{d-1}-1)$. 
Similarly, when $d$ is even and $q=2$, it suffices to fix a $3$-dimensional non-degenerate 
subspace $W$ and take $g=su=us$, with $s$ a semisimple element of order $p^{d-3}+1$ on $W^\perp$ 
and $u$ an element of order $4$ on $W$. The other two cases are similar.
\end{proof}

Finally, combining all the results we have obtained for the non-abelian simple classical groups and  Lang's theorem, we are ready to give a proof of Theorem~\ref{cor:PGammaL}.

\begin{table}[!h]
\begin{tabular}{|c|c|c|}\hline
 Simple Group $T$   &   $\meo(\Aut(T))$    &             Remark\\\hline            
    $\PSL_d(q)$     &   $(q^d-1)/(q-1)$    &      $(d,q)\neq (2,4)$, $(3,2)$\\     
                    &         $6$          &            $(d,q)=(2,4)$\\            
                    &         $8$          &         $(d,q)=(3,2)$\\\hline         
    $\PSU_d(q)$     &     $q^{d-1}-1$      & $d$ odd, $q>p$ and $(d,q)\neq (3,4)$\\
                    &         $16$         &            $(d,q)=(3,4)$\\            
                    &    $(p^{d-2}+1)p$    & $d$ odd, $q=p$ and $(d,q)\neq (5,2)$\\
                    &         $24$         &            $(d,q)=(5,2)$\\            
                    &     $q^{d-1}+1$      &          $d$ even and $q>2$\\         
                    &    $4(2^{d-3}+1)$    &       $d$ even and $q=2$\\\hline      
   $\PSp_{2m}(q)$   & $\leq q^{m+1}/(q-1)$ &            $(m,q)\ne (2,2)$\\         
   $\PSp_{4}(2)$    &         $10$         &          $(m,q)=(2,2)$\\\hline        
$\POmega_{2m+1}(q)$ & $\leq q^{m+1}/(q-1)$ &                \\\hline               
$\POmega_{2m}^+(q)$ & $\leq q^{m+1}/(q-1)$ &                \\\hline               
$\POmega_{2m}^-(q)$ & $\leq q^{m+1}/(q-1)$ &                \\\hline               
\end{tabular}
\caption{Maximum element order of $\Aut(T)$ for $T$ a non-abelian simple classical group}\label{newtable}
\end{table}

\begin{theorem}\label{cor:PGammaL}
For a classical simple group $T$ as in column $1$ of Table~$\ref{newtable}$, the value of $\meo(\Aut(T))$ is
as in column $2$ of Table~$\ref{newtable}$.
\end{theorem}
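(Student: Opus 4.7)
The plan is as follows. Corollary~\ref{cor:PGL}, Lemma~\ref{cor:PSp}, Corollary~\ref{cor:PO} and Lemma~\ref{cor:PGU} already give the required upper bound (and in many cases the exact value) for $\meo(\Inndiag(T))$ when $T$ is a classical simple group, and the lower bound is visibly attained by an explicit element of $\Inndiag(T)$ in the generic rows of Table~\ref{newtable} (for instance, a Singer cycle for $\PGL_d(q)$). What remains is to bound the orders of elements of $\Aut(T)\setminus\Inndiag(T)$, i.e.\ elements whose outer part involves a nontrivial field, graph, or graph-field automorphism.

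For this I would exploit Lang's theorem. Realise $T$ as the finite group of fixed points $\bar G^F$ of a Steinberg endomorphism $F$ on the simple algebraic group $\bar G$ over $\overline{\mathbb{F}}_p$, and let $\tau$ be a nontrivial outer automorphism commuting with $F$. Since $F\tau$ is again a Steinberg endomorphism, Lang's theorem applied to $F\tau$ on $\bar G$ shows that every element of the coset $\Inndiag(T)\cdot\tau$ is $\bar G$-conjugate, and hence $\Inndiag(T)\langle\tau\rangle$-conjugate, to one of the form $x\tau$ with $x\in C_{\Inndiag(T)}(\tau)$. This centraliser is itself a classical group, either of the same type over a proper subfield $\mathbb{F}_{p^{f/e}}$ (when $\tau$ is a field automorphism of order $e\mid f$), or of smaller rank and different type (for instance $\PGO_d(q)$ or $\PSp_d(q)$ inside $\PGL_d(q)$ for the inverse-transpose graph automorphism of $\PSL_d(q)$, with analogous statements for the graph automorphism of $\POmega_{2m}^+(q)$ and the graph-field automorphism of $\PSU_d(q)$).

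Having so normalised, $(x\tau)^{|\tau|}=x^{|\tau|}$ lies in $C_{\Inndiag(T)}(\tau)$, and hence
\[
|x\tau|\;\le\;|\tau|\cdot\meo\!\left(C_{\Inndiag(T)}(\tau)\right),
\]
to which the bounds of Corollary~\ref{cor:PGL}, Lemma~\ref{cor:PSp}, Corollary~\ref{cor:PO} and Lemma~\ref{cor:PGU} apply with smaller parameters. The final step is an elementary but case-laden comparison, showing that for each type of $\tau$ the product on the right does not exceed the value in column~2 of Table~\ref{newtable}. The prototypical inequality, arising from a field automorphism acting on $\PSL_d(q)$, is
\[
e\cdot\frac{p^{fd/e}-1}{p^{f/e}-1}\;\le\;\frac{q^d-1}{q-1},
\]
and analogous bounds are routine for the symplectic, orthogonal and unitary rows; contributions from graph automorphisms are easily dominated because $\meo(\PSp_d(q))$ and $\meo(\PGO_d(q))$ are far smaller than $\meo(\PGL_d(q))$.

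The main obstacle will be the bookkeeping across all coset types and the isolation of the small exceptions. The generic inequality above fails for precisely the pairs $(d,q)\in\{(2,4),(3,2),(3,4),(5,2)\}$ of the linear and unitary rows, and $(m,q)=(2,2)$ of the symplectic row. In these finitely many cases the groups are small enough to determine $\meo(\Aut(T))$ directly, for example using~\cite{ATLAS}: thus $\Aut(\PSL_2(4))=\Aut(\Alt(5))=\Sym(5)$ has $\meo=6$, $\Aut(\PSL_3(2))\cong \PGL_2(7)$ has $\meo=8$, and similarly for the remaining small cases. These direct verifications account for the exceptional rows of Table~\ref{newtable} and complete the proof.
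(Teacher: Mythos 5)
Your overall strategy --- handle $\Inndiag(T)$ with Corollary~\ref{cor:PGL}, Lemma~\ref{cor:PSp}, Corollary~\ref{cor:PO} and Lemma~\ref{cor:PGU}, use Lang's theorem to push elements of outer cosets into smaller classical groups, and finish with a finite list of direct checks --- coincides with the paper's treatment of the field and graph-field cosets, but it has two genuine problems. First, the normalisation step is overstated: it is not true that every element of the coset $\Inndiag(T)\cdot\tau$ is conjugate to $x\tau$ with $x\in C_{\Inndiag(T)}(\tau)$ (such a statement holds for quasi-semisimple elements, not in general). What Lang's theorem actually yields, and what the paper proves in~\eqref{eqeqeq1}, is only that $(x\tau)^{e}$ is conjugate in the algebraic group to an element of the fixed-point subgroup of the relevant Steinberg endomorphism; this weaker statement still gives $|x\tau|\le e\cdot\meo(Y(q^{1/e}))$ for the subfield subgroup $Y(q^{1/e})$, so the field-automorphism part of your argument survives once restated correctly.

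Second, and more seriously, Lang's theorem simply does not apply to the coset of a \emph{pure} graph automorphism: the inverse-transpose automorphism of $\PGL_d(q)$ and the triality automorphism of $\POmega_8^+(q)$ are not Steinberg endomorphisms of the ambient algebraic group, since their fixed-point subgroups ($\Sp_d$ or $\SO_d$, respectively $\GG_2$) are positive-dimensional and hence the map $a\mapsto aa^{-\tau}$ is not surjective. Your assertion that these contributions are ``easily dominated'' is therefore unsupported, and false as a blanket claim: the exceptional value $\meo(\Aut(\PSL_3(2)))=8>(2^3-1)/(2-1)$ in Table~\ref{newtable} is produced precisely by the inverse-transpose coset. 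The paper is forced to use entirely different tools here: the Fulman--Guralnick classification of elements of the form $xx^{-tr}$ (\cite[Theorem 4.2]{FG}), a nontrivial case analysis which throws up further candidates $(d,q)=(4,2),(5,2)$ needing direct computation; the embedding of $\POmega_8^+(q).\Sym(3)$ into $\FF_4(q)$ for triality; and, for the extraordinary graph automorphism of $\PSp_4(2^f)$ (a case your proposal omits entirely), an argument exploiting the fact that outer involutions have Suzuki-group centralizers. Without arguments of this kind your proof does not cover the graph-automorphism cosets of $\Aut(T)$ for $T=\PSL_d(q)$, $\PSp_4(2^f)$ and $\POmega_{2m}^+(q)$, which is where several entries of Table~\ref{newtable} are actually decided.
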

\begin{proof}
As usual, we write $q=p^f$ for some prime $p$. For each of the classical groups $\PGL_d(q)$, $\PCSp_{2m}(q)$, 
$\PGO_{2m+1}(q)$ and $\PGO_{2m}^+(q)$, let $X$ be the corresponding algebraic group over the 
algebraic closure of the finite field $\mathbb{F}_q$. Let $F:X\to X$ be a Lang--Steinberg map for $X$. 
We denote the group of fixed points of $F$ by $X^F(q)$. In particular, $X^F(q)$ is one of the following groups: 
$\PGL_d(q)$ or $\PGU_d(q)$ (when $X$ is of type $\AA_{d-1}$),  $\PGO_{2m+1}(q)$ (when $X$ is of type $\BB_m$), 
$\PCSp_{2m}(q)$ (when $X$ is of type $\CC_m$), a subgroup of index two of $\PGO_{2m}^+(q)$ or  $\PGO_{2m}^-(q)$ 
(when $X$ is of type $\DD_m$; namely $(\GO^{\pm}_{2m}(q)^\circ)/Z(\GO^{\pm}_{2m}(q)^\circ)$ where $\GO^{\pm}_{2m}(q)^\circ$ 
is the subgroup of $\GO^{\pm}_{2m}(q)$ that stabilizes each of the two $\SO^{\pm}_{2m}(q)$-orbits of $m$-dimensional 
totally singular subspaces; see~\cite[p. 39-41]{Carter}). Write $Y=\PGO_{2m}^+(q)$ or  $\PGO_{2m}^-(q)$, 
as appropriate, 
in these last cases, and in all other cases write $Y=X^F(q)$.
 
Let $T$ be the socle of $X^F(q)$. From~\cite[Table~$5$, page~xvi]{ATLAS}, the automorphism group $A$ of 
$T$ is $(Y\rtimes\langle\phi\rangle). \Gamma$ where $\phi$ is a generator of the group of field automorphisms 
and $\Gamma$ is the group of graph automorphisms of the corresponding Dynkin diagram. In particular,  
$|\Gamma|\in \{1,2,6\}$ and in fact  $|\Gamma|=6$ if and only if $T=\POmega_8^+(q)$. Moreover, $|\Gamma|=2$ 
if and only if $T=\PSL_d(q)$ with $d\geq 3$, $T=\POmega_{2m}^+(q)$ with $m\geq 5$, or $T=\PSp_4(2^{f})$. 

First suppose that $g \in Y \rtimes \langle\phi\rangle$. Then $g=x\psi^{-1}$ with $x\in Y$, 
where $\psi$ is an element of order $e$ in $\langle\phi\rangle$. We have $|\langle\phi\rangle|=2f$ 
if and only if $Y=\PGU_d(q)$ or $Y=\PGO_{2m}^-(q)$, and $|\langle\phi\rangle|=f$ otherwise 
(see~\cite[Table~$5$, page~xvi]{ATLAS} for example).  

If $\psi=1$, then $g\in Y$ and $|g|$ is at most the bound in Table~\ref{newtable}, by the results in 
Corollaries~\ref{cor:PGL} and~\ref{cor:PO}, and Lemmas~\ref{cor:PSp} and~\ref{cor:PGU}. 
So suppose that $\psi\neq 1$; that is $e\geq 2$. Observe that when $X^F(q)$ is untwisted, $\psi$ is the 
restriction to $X^F(q)$ of the Lang--Steinberg map $\sigma_{q_0}$ (where $q_0^e=q$), which by abuse of 
notation, we also denote by $\psi$. When $X^F = \PGU_d(q)$ or $P(\GO^{-}_{2m}(q)^\circ)$, then 
$F= \sigma_q \tau$, where $\tau$ is a graph automorphism of $X$ induced  from the order $2$ symmetry 
of the Dynkin diagram, and  $\psi$ is the restriction to $X^F(q)$ of the Lang--Steinberg map 
$\sigma_{q_0}\tau$ when $e$ is odd (and where $q_0^e=q$) and $\sigma_{q_0}$ when $e=2k$ is even, (and where  $q_0^k=q$). As  in the untwisted case, by abuse of notation we also denote these maps by $\psi$.

By Lang's theorem, there exists $a$ in the 
algebraic group $X$ such that $aa^{-\psi} =x$. Observe that
$ (x\psi^{-1})^e = xx^\psi\cdots x^{\psi^{e-2}} x^{\psi^{e-1}}$ and  write $z=a^{-1}(x\psi^{-1})^ea$. 
Now observe further that
\begin{eqnarray}\label{eqeqeq1}
z^\psi&=& a^{-\psi} (x^\psi x^{\psi^2}\cdots x^{\psi^{e-1}}x^{\psi^e}) a^{\psi}= 
a^{-\psi} (x^\psi x^{\psi^2}\cdots x^{\psi^{e-1}}x) a^{\psi}\\\nonumber
&=&(a^{-\psi} x^{-1}) (xx^{\psi} \cdots x^{\psi^{e-1}})(xa^{\psi})=
a^{-1} (xx^\psi\cdots x^{\psi^{e-1}}) a = a^{-1}(x\psi^{-1})^ea=z
\end{eqnarray}
and so $z$ is invariant under the Lang--Steinberg map $\psi$. 
It follows that in the untwisted cases $z\in  Y(q^{1/e})$, where $Y(q^{1/e})= 
\PGL_d(q^{1/e}), \PGO_{2m+1}(q^{1/e}), \PCSp_{2m}(q^{1/e})$, $\GO_{2m}^{+}(q^{1/e})^\circ/Z(\GO_{2m}^{+}(q^{1/e})^\circ)$. If $Y$ is twisted and $e$ is odd then $z \in Y(q^{1/e})$ where $Y(q^{1/e})= \PGU_d(q^{1/e}) , \GO_{2m}^{-}(q^{1/e})^{\circ}/Z(\GO_{2m}^{-}(q^{1/e})^{\circ})$. So unless $Y$ is twisted and $e$ is even we have 
\[ 
|g|=|x\psi^{-1}|\leq e|(x\psi^{-1})^e|=e|z|\leq e\meo(Y(q^{1/e})). 
\]
Using the bounds obtained in 
Corollaries~\ref{cor:PGL} and~\ref{cor:PO}, and Lemmas~\ref{cor:PSp} and~\ref{cor:PGU} for 
$\meo(Y(q^{1/e}))$ and $\meo(Y)$, we can show (by a straightforward calculation) that the quantity
$e\meo(Y(q^{1/e}))\leq \meo(Y)$ unless  $Y=X^F(q)=\PGL_2(4)$,
and in this case $|g|\leq 6$ (see line 2 of Table~\ref{newtable}). 
If $Y$ is twisted and $e=2k$ is even, then $z \in \PGL_d(q^{1/k})$ or $\GO_{2m}^{+}(q^{1/k})^\circ/Z(\GO_{2m}^{+}(q^{1/k})^\circ)$ and similar arguments eliminate these cases unless $e=2$ (and $\psi$ induces a graph involution in the terminology of \cite{GLS}). But in this case, we appeal to the element order preserving bijection between $\langle \PGL_n(q),\tau \rangle$ conjugacy classes in the coset $\PGL_n(q)\tau$ and $\langle \PGU_n(q),\tau \rangle$ conjugacy classes in the coset $\PGU_n(q)\tau$. See \cite[Lemmas 2.1--2.3]{gowvin} for details. Thus  the case of $e=2$ and $Y= \PGU_d(q)$ can be covered  by the case of $g=x \tau$ and $Y=\PGL_d(q)$ below. Similarly, by \cite[Lemmas 2.1--2.3]{gowvin} the case  $e=2$ and $Y= \PGO^{-}_{2m}(q)$ is covered by the case of $g=x \tau$, $Y= \PGO^{+}_{2m}(q)$ below.

Thus we assume  that $g \notin Y\rtimes \langle\phi\rangle$ from now on. In particular, 
$T$ is either $\PSL_d(q)$ (with $d\geq 3$), $\PSp_4(2^f)$, or $\POmega_{2m}^+(q)$ (that is, 
$T$ is a simple classical  group  admitting a non-trivial graph automorphism). We deal with 
each of these three cases separately. 

\smallskip
\noindent\textsc{Case $Y=X^F(q)=\PGL_d(q)$.} 
\smallskip

\noindent  We may assume that $g=x\psi^{-1} \tau$, with $x\in X^F(q)$, $\psi$ an element of 
order $e$ in $\langle\phi\rangle$ and $\tau$ the inverse-transpose automorphism. In particular, $d \ge 3$.  

First suppose that $\psi=1$ and  set $y=g^2= xx^{-tr}$, where $x^{tr}$ denotes the transpose 
of the matrix $x$. The possibilities for $y$ are described explicitly in \cite[Theorem 4.2]{FG}:
\begin{enumerate}
\item  if  $\theta(t)^{k}$ is an elementary divisor of $y$, then so is $\bar{\theta}(t)^k$ (and with the same multiplcity), where $\bar{\theta}(t)  =t^{\deg \theta} \theta(1/t)/ \theta(0)$;
\item the elementary divisors $(t-1)^{2k}$ occur with even multiplicity for $k=1,2,\ldots$;
\item if $q$ is odd, the elementary divisors $(t+1)^{2k+1}$ occur with even multiplicity for $k=1,2,\ldots$.
\end{enumerate}
Now $\Sp_{2n}(q)$ contains elements $z$ with elementary divisors satisfying the following properties (see \cite[p. 210]{F2} and \cite[Corollary 5.3]{FG}):
\begin{enumerate}
\item  if  $\theta(t)^{k}$ is an elementary divisor of $z$, then so is $\bar{\theta}(t)^k$ (with the same multiplicity); 
\item the elementary divisors $(t-1)^{2k+1}$ occur with even multiplicity for $k=1,2,\ldots$;
\item the elementary divisors $(t+1)^{2k+1}$ occur with even multiplicity for $k=1,2,\ldots$. 
\end{enumerate}
Thus, either (i) $y$ is conjugate to an element of $\Sp_d(q)$ (and $d$ is even), or (ii) an elementary divisor $(t-1)^{2k+1}$ occurs with odd multiplicity. In  case (i), $|g| \le 2 q^{d/2+1}/(q-1)$ by \Cref{cor:PSp}, which is at most $(q^{d}-1)/(q-1)$ unless $(d,q)=(4,2)$. If (ii) holds then $y$ is conjugate to  $u + y'$ for $u = J_{2k_1+1}+ \cdots +J_{2k_l+1} \in \GL_{d'}(q)$ and  $y' \in \Sp_{d-d'}(q)$; in particular,  
\[|g| \le 2  \max_{i} \{ p^{ \lceil \log_p (2k_i+1)\rceil} \}\meo (\mathrm{Sp}_{d-d'}(q)).\] 
Clearly, to bound the right hand side, it suffices to bound   $p^{ \lceil \log_p (2k+1)\rceil} \meo (\Sp_{d-2k-1}(q))$. For $d=3$, either $k=1$ and $|g|=2|J_3|$ or $k=0$ and $|g| \le 2 \meo(\Sp_2(q))=2q+2$; thus $|g| \le (q^{3}-1)/(q-1)$ unless $q=2$.  If $d \ge 4$, then  by  \Cref{cor:PSp} we have (in case (ii))
\[ |g| \le 2 p^{ \lceil \log_p (2k+1)\rceil} q^{(d-2k+1)/2}\]
which we can check is at most $(q^d-1)/(q-1)$ unless $(d,q)=(4,2),(5,2)$.
The exceptional cases $(d,q)=(3,2)$,$(4,2)$, $(5,2)$ from (i) and (ii) can be dealt with by direct computation, 
and we note that the first case appears in line 3 of Table~\ref{newtable}.

Next, suppose that $\psi$ is a non-trivial element of even order $e$. 
By Lang's theorem, there exists $a$ in the algebraic group $X$ with 
$aa^{-\psi\tau}=x$. Note that since $\psi$ and $\tau$ commute, the 
element $\psi\tau$ has order $e$. Now the same argument as in~\eqref{eqeqeq1} 
shows that $z=a^{-1}g^ea$ is fixed by $\psi \tau$. Therefore $g^e$ is $X$-conjugate 
to an element in $X^{\sigma}(q^{1/e})=\PGU_d(q^{1/e})$ where $\sigma = \tau F^{1/e}$ and 
so $|g| \le e \meo(\PGU_d(q^{1/e}))$. 
Lemma~\ref{cor:PGU} implies that the right hand side is less than $(q^d-1)/(q-1)$ for $d \ge 3$.
	
It remains to consider the case where $\psi \in \langle \phi \rangle$ 
has odd order $e\ge 3$. In this case,  $g^2 \in \mathrm{P\Gamma L}_d(q)$ and 
the argument for field automorphisms applied to $g^2$ shows that $|g| \le 2 
e (q^{d/e}-1)/(q^{1/e}-1)$,  and the right hand side is less than $(q^d-1)/(q-1)$ for $e \ge 3$. 

\smallskip

\noindent \textsc{Case $T=\PSp_4(q)$ with $q=2^f$.}

\smallskip

\noindent The cases where $f=1,2$ can be treated by a direct calculation 
(or with the invaluable help of \texttt{magma}~\cite{magma}).  Thus we may 
assume that $f\geq 3$. We have
$g\not\in X^F(q)\rtimes \langle\phi\rangle$,  
 and we note that  $g^2 \in X^F(q) \rtimes \langle \phi \rangle$. 

First suppose that $g^2 \not\in X^F(q)$. Then $g^2=x'\psi'$, for some $x'\in X^F(q)$ 
and for some field automorphism $\psi'$ of order $e \ge 2$. The same argument as in 
the previous case shows that $|g|=2|g^2|\leq  2e \meo(X^F(q^{1/e}))$. Applying 
Lemma~\ref{cor:PSp} implies that $|g| \le 2e  q^{3/e}/(q^{1/e}-1)$, which is 
bounded above by $q^{3}/(q-1)$ as required.

So we may assume that $g^2 \in X^F(q)$. Since  $g \not\in X^F(q)$, the element 
$g$ projects to an element of order $2$ in $\Out(T)$. Now $\Out(T)$ is cyclic of order $2f$ and
is generated by the extraordinary ``graph automorphism''. In particular, if $f$ were even, 
then $g^2$ would not lie in $X^F(q)$. Hence $f$ is odd. We note that $g^2$ cannot 
have order $q^2- 1$ or $q^2+1$, as in these cases $g^2 \in \cent {\PSp_4(q)}{g^{|g^2|}}$ 
and $g^{|g^2|}$ is an outer involution whose centralizer in 
$\PSp_4(q)$ is isomorphic to ${^2}\BB_2(q)$ by~\cite[(19.5)]{AS}. This is not possible 
since the Suzuki groups do not contain elements of order $q^{2}\pm 1$.  It now follows 
from an analysis of the element orders in $\PSp_4(q)$  that $|g^2| \le (q^{2}+1)/2\leq 
q^3/(2(q-1))$ (see~\eqref{e:PCSporder}). Hence $|g|\leq q^3/(q-1)$.
\smallskip

\noindent\textsc{Case $T=\POmega_{2m}^+(q)$. }

\smallskip   

\noindent We may assume that $g = x \psi^{-1} \tau$, where  $x \in \PGO^{+}_{2m}(q)$, 
$\psi \in \langle \phi \rangle$ (the group of field automorphisms) and $\psi$ 
has order $e \ge 1$, and in this case we let $\tau$ denote a graph automorphism of order $2$ or $3$. 
If $e=1$ and $\tau$ has order $2$ then $g \in \PGO^{+}_{2m}(q)$ and 
Corollary~\ref{cor:PO} applies. 

If $\tau$ has order $2$ and $e \ge 2$ then we consider three cases:  
If $e \ge 4$ and $e$ is even, then $g^2 \in Y.\langle \phi \rangle$ is in the 
$Y$-coset of a field automorphism of order $e/2$. Arguing as above we find that $g^{e}$ is $X$-conjugate 
to an element in $X^{F^{2/e}}(q^{2/e})=P(\GO^{\epsilon'}_{2m}(q^{2/e})^\circ)$ \cite[p. 40]{Carter} 
and  $|g| \le e q^{2(m+1)/e}/(q^{2/e}-1)$
by Corollary~\ref{cor:PO}.  
If $e \ge 3$ and $e$ is odd then $g^2$ is in the $Y$-coset of a field automorphism 
of order $e$  and so $g^{2e}$ is $X$-conjugate to an element in $X^{F^{1/e}}(q^{1/e})= P(\GO^{\epsilon'}_{2m}(q^{1/e})^\circ)$; 
therefore $|g| \le 2e q^{(m+1)/e}/(q^{1/e}-1)$. If $e=2$ then, picking $a \in X$ 
such that $x=a a^{-\psi \tau}$,  we can show that $a^{-1}g^2 a$ is fixed by $\tau \psi$ 
(in the same way as in~\eqref{eqeqeq1}); thus $g^2$ is conjugate to an element of 
$P(\GO^{-}_{2m}(q^{1/2})^\circ)$ \cite[4.9.1(a),(b)]{GLS} and $|g| \le 2q^{(m+1)/2}/(q^{1/2}-1)$.  
In all three cases, a direct calculation shows that the upper bounds we have found 
are less than $q^{m+1}/(q-1)$ for all $q$ and all $m\ge 4$.

Now suppose that $\tau$ has order $3$ so that $m=4$. If $e=1$ then $g \in \POmega_{8}^{+}(q).\Sym(3)$ 
if $q$ is even, and $g \in \POmega_{8}^{+}(q).\Sym(4) = 
\PGO_8^{+}(q).3$ if $q$ is odd (see~\cite[p. 75]{Wilson} for example). 
Since $(2,q-1)^2. \POmega^{+}_{8}(q).\Sym(3)$ is a subgroup of $\FF_4(q)$ (see~\cite[Table 5.1]{LSS}), 
it follows that $|g| \le \meo(\FF_4(q))$  and the bound $|g|\le q^5/(q-1)$ follows from~\cite{KS} 
when $q$ is odd and from~\cite{Shinoda}  when $q$ is even.

Finally, if $\tau$ has order $3$ and $e\ge 2$, then  $g^3 \in Y \rtimes \langle \phi \rangle$. 
If $e \ne 3$ then $g^3$ is in the $Y$-coset of a field automorphism of order $e'$ say, 
where $e' \ge 2$. Therefore  $|g| \le 3e' q^{(m+1)/e'}/(q^{1/e'}-1)$ for some $e' \ge 2$. If 
$e=3$ then, picking $a$ in the algebraic group $X$ such that $x=a a^{-\psi \tau}$, we can show that $a^{-1}g^3a$ is 
fixed by $\tau \psi$; thus $a^{-1}g^3a$ is an element of ${^3}\DD_4(q^{1/3})$  \cite[4.9.1(a),(b)]{GLS}. It follows 
that  $|g| \le 3 \meo({^3}\DD_4(q^{1/3}))$,  which is at most $3(q-1)(q^{1/3}+1)$ 
by~\cite{KS} for $q$ odd,  and  by~\cite[Tables 1.1 and 2.2a]{3d4even} for $q$ even,  unless $q^{1/3}=2$.  
For $q^{1/3}=2$, 
we have $\meo({^3}\DD_4(2))=28$ using~\cite{ATLAS}. In all three cases, a direct computation shows 
that our upper bounds are at most $q^{m+1}/(q-1)$ for all $m \ge 4$, as required.     
\end{proof}

\section{Permutation representations of non-abelian simple groups}\label{md}
In this section  we collect in Table~\ref{TableMDR} some results from the 
literature describing the minimal degree of a permutation
representation of each simple group of Lie type. For
the simple classical 
groups this information is obtained  from~\cite[Table~$5.2.A$]{KL} (which in turn 
came from \cite{coop}) and for
the exceptional groups of Lie type it is obtained
from~\cite{Vav3},~\cite[Theorems~$1$,~$2$ and~$3$]{Vav2}, 
and~\cite[Theorems~$1$,~$2$,~$3$ and~$4$]{Vav1}. We note that the rows
corresponding to the classical groups $\POmega_{2m}^+(q)$ and
$\PSU_{2m}(2)$ in~\cite[Table~$5.2.A$]{KL} are incorrect and our
Table~\ref{TableMDR} takes into account the corrections that were 
brilliantly spotted by  Mazurov and Vasil$'$ev~\cite{Vav4} in 1994.  
\vspace*{2.5cm}

\begin{table}[!h]
\begin{tabular}{|c|c|c|}\hline
          Group           &             Degree of Min. Perm. Repres.            &           Condition \\    \hline \\[-2.2ex]           
       $\PSL_d(q)$        &                 $ \displaystyle \frac{q^d-1}{q-1}$                 &       $(q,d)\neq(2,5),(2,7),$\\       
                          &                                                     &         $(2,9),(2,11),(4,2)$\\        
 $\PSL_2(q)$, $\PSL_4(2)$ &                   $5,7,6,11$, $8$                   &          $q=5,7,9,11$  \\\hline \\[-2.2ex]            
      $\PSp_{2m}(q)$      &                $\displaystyle \frac{q^{2m}-1}{q-1}$               & $m\geq 2$, $q>2$, $(m,q)\neq (2,3)$\\ 
      $\PSp_{2m}(2)$      &                   $2^{m-1}(2^m-1)$                  &              $m\geq 3$\\              
$\PSp_4(2)'$, $\PSp_4(3)$ &                      $6$, $27$                      &                 \\\hline \\[-2.2ex]                 
   $\POmega_{2m+1}(q)$    &                $\displaystyle\frac{q^{2m}-1}{q-1}$               &         $m\geq 3$, $q\geq 5$\\        
   $\POmega_{2m+1}(3)$    &                    $3^m(3^m-1)/2$                   &           $m\geq 3$  \\\hline \\[-2.2ex]              
   $\POmega_{2m}^+(q)$    &           $ \displaystyle \frac{(q^m-1)(q^{m-1}+1)}{q-1}$          &         $m\geq 4$, $q\geq 4$\\        
   $\POmega_{2m}^+(3)$    &                  $3^{m-1}(3^m-1)/2$                 &              $m\geq 4$\\              
   $\POmega_{2m}^+(2)$    &                   $2^{m-1}(2^m-1)$                  &           $m\geq 4$  \\\hline \\[-2.2ex]       
   $\POmega_{2m}^-(q)$    &           $\displaystyle \frac{(q^m+1)(q^{m-1}-1)}{q-1}$          &           $m\geq 4$       \\[3mm]\hline \\[-2.2ex]         
       $\PSU_3(q)$        &                       $q^3+1$                       &              $q\neq 5$\\              
       $\PSU_3(5)$        &                         $50$                        &                   \\                  
       $\PSU_4(q)$        &                    $(q+1)(q^3+1)$                   &                   \\                  
       $\PSU_d(q)$        &   $\displaystyle \frac{(q^d-(-1)^d)(q^{d-1}-(-1)^{d-1})}{q^2-1}$  &        $d\geq 5$, $d$ odd or,\\       
                          &                                                     &        $d$ even and $q\neq 2$\\       
      $\PSU_{2m}(2)$      &                $2^{2m-1}(2^{2m}-1)/3$               &           $m\geq 3$   \\\hline \\[-2.2ex]            
        $\GG_2(q)$        &                 $\displaystyle \frac{q^6-1}{q-1}$                 &                $q>4$\\                
        $\GG_2(3)$        &                        $351$                        &                   \\                  
        $\GG_2(4)$        &                        $416$                        &                   \\                  
        $\FF_4(q)$        &           $\displaystyle\frac{(q^{12}-1)(q^4+1)}{q-1}$           &                   \\                  
        $\EE_6(q)$        &           $\displaystyle\frac{(q^9-1)(q^8+q^4+1)}{q-1}$          &                   \\                  
        $\EE_7(q)$        &        $\displaystyle\frac{(q^{14}-1)(q^9+1)(q^5-1)}{q-1}$       &                   \\                  
        $\EE_8(q)$        & $\displaystyle \frac{(q^{30}-1)(q^{12}+1)(q^{10}+1)(q^6+1)}{q-1}$ &                \\[3mm]\hline \\[-2.2ex]              
      ${^2}\BB_2(q)$      &                       $q^2+1$                       &          $q=2^{f}$, $f$ odd\\         
      ${^2}\GG_2(q)$      &                       $q^3+1$                       &           $q=3^f$, $f$ odd\\          
      ${^3}\DD_4(q)$      &                  $(q^8+q^4+1)(q+1)$                 &                   \\                  
      ${^2}\EE_6(q)$      &      $\displaystyle \frac{(q^{12}-1)(q^6-q^3+1)(q^4+1)}{q-1}$     &                   \\                  
      ${^2}\FF_4(q)$      &                $(q^6+1)(q^3+1)(q+1)$                &            $q=2^f$\\\hline            
\end{tabular}
\caption{Degree of the minimal permutation representations}\label{TableMDR}
\end{table}

\section{Proof of Theorem~\ref{main2}}\label{meo}

In this section, we prove Theorem~\ref{main2} by determining the finite non-abelian simple groups $T$ for which $\meo(\Aut(T)) \ge m(T)/4$.

\begin{proof}[Proof of Theorem~\ref{main2}]Let $T$ be a finite non-abelian simple group and write $o(T)=\meo(\Aut(T))$ and $m(T)$ for the minimal degree of a faithful permutation representation of $T$. 
First, we quickly deal with the cases where $T$ is an alternating group or a sporadic group. Then we may assume that $T$ is a simple group of Lie type, where the situation is more complex. 
If $T=\Alt(m)$ (and $m\geq 5$), then the minimal degree of a permutation representation of $T$ is $m$. Since $\Aut(T)$ contains an element of order $m$, we have $\meo(\Aut(T))\geq m$ and so $T$ is one of the exceptions in the statement of the theorem. Similarly, if $T$ is a sporadic simple group (including the Tits group), then  the proof follows from a case-by-case analysis using~\cite{ATLAS}.

If $T$ is a classical group, then the theorem follows by comparing Table~\ref{newtable} with Table~\ref{TableMDR}. We find that if $o(T)\geq m(T)/4$, then either $T=\PSL_d(q)$ or $T$ belongs to a  short list of exceptions. These exceptions are then analysed using~\texttt{magma}.

Now suppose that $T$ is a finite exceptional group.  As one might expect, we consider the possibilities for the Lie type of $T$ on a case-by-case basis. Complete
information on $m(T)$ is listed in Table~\ref{TableMDR}. We shall use repeatedly the inequalities 
\begin{equation} \label{e:meoaut}
  o(T)\leq \meo(\Out(T))\meo(T)\leq |\Out(T)|\meo(T).
\end{equation}
Detailed information on $|\Out(T)|$ and on the group-structure of $\Out(T)$ can be found in~\cite[Table~$5$, page~xvi]{ATLAS}. 

 When $T$ has odd characteristic, we use the explicit formula
for $\meo(T)$ (see~\cite{KS}) together with~\eqref{e:meoaut} to obtain 
 upper bounds on $o(T)$. These bounds suffice  to show that $o(T)<m(T)/4$ when $T = \EE_6(q)$, ${^2}\EE_6(q)$, $\EE_7(q)$, $\EE_8(q)$, $\FF_4(q)$, $\GG_2(q)$, ${^3}\DD_4(q)$ or ${^2}\GG_2(3^f)$.

Now suppose that $T$ has
even characteristic; in this case there is no known formula for $\meo(T)$. In some cases we  therefore  use \emph{ad hoc} arguments. 

First suppose that  $T={^2}\BB_2(2^{2k+1})$ with $k\geq 1$.
\noindent From~\cite[Table~5, page~xvi]{ATLAS},
we see that $|\Out(T)|= 2k+1$. 
It follows from~\cite{Suzuki} that $\meo(T)=2^{2k+1}+2^{k+1}+1$. In 
particular,  $o(T)\leq(2k+1)(2^{2k+1}+2^{k+1}+1)$ and  $(2k+1)(2^{2k+1}+2^{k+1}+1)< m(T)/4$ in all cases. 

For the other exceptional groups we observe that 
every element $g \in T$ can be
written uniquely as $g=su=us$, with $s$ semisimple and $u$ unipotent. In
particular, \[ |g|=|s||u|\leq  |\smax| |\umax| \] 
where $\smax$ is a semisimple element in $T$ of maximum order and $\umax$ is a unipotent element in $T$ of maximum order.
%
Suppose that $T=\EE_6(2^f)$. By~\cite[Table~5, page~xvi]{ATLAS}, we have $|\Out(T)|=2f (3,2^f-1)$. The description of the maximal tori of $T$ in~\cite[Section~$2.7$]{KS2} implies that the maximum order of a semisimple element of $T$ is at most  $\alpha =(q+1)(q^5-1)/(3,q-1)$. From~\cite[Table 5]{Law} we see that the maximum order of a unipotent element in $\EE_6(q)$ is $16=|\umax|$ when $q$ is even.  Summing up, we have  
\begin{equation} 
  o(T) \le \alpha |\umax|  |\Out(T)|,
\end{equation}
and the right hand side in our case is $32f(2^f+1)(2^{5f}-1)$. 
A direct computation shows that the inequality $32f(2^f+1)(2^{5f}-1)< m(T)/4$ holds for all $f \ge 1$.

This argument works for nearly all of the other exceptional groups in even characteristic. We list these cases in Table~\ref{tab:oTexceptional}. For the reader's convenience we list the formulas for $|\Out(T)|$  in column 4 of Table~\ref{tab:oTexceptional} for all $q$ (not necessarily of the form $q=2^f$). 
For nearly all values of $q=2^f$, we have 
\begin{equation} \label{e:oTupperbound}
  m(T)/4 > \alpha |\umax|  |\Out(T)|;
\end{equation} 
Column 5 of Table~\ref{tab:oTexceptional} lists the only values of $q=2^f$ for which  the inequality in~\eqref{e:oTupperbound} fails.

\begin{table}[htdp]
\begin{center}\begin{tabular}{ccccc}
\hline
             $T$                &   $ \alpha$ where $|\smax| \le \alpha$   &    $|\umax|$     &    $|\Out(T)|$     &             $2^f$ where   \\           
                                &                                          &                  &                    &   \eqref{e:oTupperbound} fails \\\hline \\[-2.2ex]   
        $\EE_6(2^f)$            &       $(2^f+1)(2^{5f}-1)/(3,q-1)$        &      $16$        &    $2f (3,q-1)$    &                  --- \\                
          $\EE_7(2^f)$          &          $(q+1)(q^2+1)(q^4+1)$           &       $32$       &    $f(2,q-1)$      &                  --- \\                
         $\EE_8(2^f)$           &         $(q+1)(q^2+q+1)(q^5-1)$          &       $32$       &        $f$         &                  --- \\                
         $\FF_4(2^f)$           &             $(q+1)(q^3-1)$               &       $16$       &    $ f(2,p)$       &                   ---\\                
   $\GG_2(2^f)$ ($f \ge 2$)     &                $q^2+q+1$                 &         8        &      $f(3,p)$      &                  $4$ \\                
      ${^3}\DD_4(2^{f})$        &              $q^4+q^3-q-1$               &        $8$       &       $3f$         &                  $2$ \\                
      ${^{2}}\EE_6(2^f)$        &      $(q+1)(q^2+1)(q^3-1)/(3,q+1)$       &       $16$       &    $2f(3,q+1)$     &                  --- \\                
${^2}\FF_4(2^{f})$ ($f \ge 3$)  &     $q^2+\sqrt{2q^3}+q+\sqrt{2q}+1$      &       $16$       &       $ f$         &                           ---\\        
\hline
 \end{tabular} \caption{Calculations in proof of Theorem~\ref{main2}}
\end{center}
\label{tab:oTexceptional}
\end{table}
 In view of Column 5  of Table~\ref{tab:oTexceptional}, it remains to consider $T=\GG_2(4)$ and ${^3}\DD_4(2)$.
 In the first case we see from~\cite[page~$97$]{ATLAS} that the maximum element order of $\Aut(\GG_2(4))$ is $24$ and so $24=o(T)<m(T)/4=104$. 
 In the second case we see from~\cite[page~$89$]{ATLAS} that the maximum element order of $\Aut({^3}\DD_4(2))$ is $24$ and so $24=o(T)<m(T)/4=819/4$.
\end{proof}

\section{Proof of Theorem~\ref{main}}\label{sec:them2}

In this section, we classify the primitive permutation groups of degree $n$ that contain an element of order at least $n/4$.
Our proof proceeds according to the O'Nan--Scott type of the primitive permutation 
group $G$, and we use the notation for these types discussed in Subsection~\ref{sub1}.
We treat the almost simple AS  and the simple diagonal SD types in separate 
subsections, and then consider the other types to complete the proof.

\subsection{Proof of Theorem~\ref{main} for almost simple groups}\label{AS}
In this subsection we prove Theorem~\ref{main} for primitive groups
of AS type. We start with a series of very technical  lemmas concerning $\GL_d(q)$ and  
the affine general linear group $\AGL_d(q)$.

\begin{lemma}\label{lemma1}Let $d\geq 2$ and let $K$ be the subgroup of $\GL_d(q)$ containing $\SL_d(q)$ that satisfies $|\GL_d(q):K|=\gcd(d+1,q-1)$. Assume that there exists $H\leq K$ with $|K:H|\leq 8$. Then either $d=2$ and $q\in \{2,3,4,5,7\}$, or $d\in\{3,4\}$ and $q=2$, or $\SL_d(q)\leq H$.
\end{lemma}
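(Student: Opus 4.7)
The natural move is to study the intersection $L := H \cap \SL_d(q)$. From the standard index estimate $|\SL_d(q)H:H| \leq |K:H| \leq 8$ one obtains $|\SL_d(q):L|\leq 8$, and setting $Z := \Z{\SL_d(q)}$ it then follows that $\bar L := LZ/Z$ is a subgroup of $\PSL_d(q)$ of index at most $8$.

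Two cases arise. If $\bar L = \PSL_d(q)$, i.e.\ $LZ = \SL_d(q)$, then because $Z$ is central in $\SL_d(q)$ the subgroup $L$ is normal in $LZ=\SL_d(q)$. Provided $(d,q) \notin \{(2,2),(2,3)\}$, the group $\SL_d(q)$ is quasisimple, so its proper normal subgroups are contained in $Z$; the only $L$ compatible with $LZ = \SL_d(q)$ is $L=\SL_d(q)$, yielding $\SL_d(q) \leq H$ as required. If instead $\bar L$ is a proper subgroup of $\PSL_d(q)$ of index at most $8$, and $\PSL_d(q)$ is simple, then the minimal faithful permutation degree of $\PSL_d(q)$ is at most $8$. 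Consulting Table~\ref{TableMDR}, the only such pairs are $(d,q) \in \{(2,4),(2,5),(2,7),(2,9),(3,2),(4,2)\}$; together with the non-simple cases $(2,2)$ and $(2,3)$ this almost matches the list of exceptions in the statement, leaving only $(d,q)=(2,9)$ to rule out.

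For $(d,q)=(2,9)$ one has $\gcd(3,8)=1$, so $K=\GL_2(9)$ has order $5760$, and $\PSL_2(9) \cong \Alt(6)$ has proper subgroups only of indices $6,10,15,\ldots$. Hence $\bar L$ must have index exactly $6$, and a short arithmetic check using $|\SL_2(9):L|\leq 8$ forces $Z \leq L$, $|L|=120$, $H\cdot\SL_2(9)=K$, and $|K:H|=6$. I would then analyse the core $N := \bigcap_{g\in K} H^g$, a normal subgroup of $K$ contained in $H$, via the possibilities for $N \cap \SL_2(9) \triangleleft \SL_2(9)$, which equals $1$, $Z$, or $\SL_2(9)$. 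The last option gives $\SL_2(9)\leq H$ directly. The option $N\cap \SL_2(9)=1$ forces $N$ to centralise $\SL_2(9)$, so by Schur's lemma $N \leq \Z{\GL_2(9)} \cong C_8$, and since every non-trivial subgroup of $C_8$ contains the unique involution $-I$, one obtains $N=1$, so $K$ would embed in $\Sym(6)$, contradicting $|K|>720$. The option $N\cap \SL_2(9)=Z$ similarly forces $N\leq \Z{\GL_2(9)}$ and then $N=\Z{\GL_2(9)}$ by an order count; the quotient $K/N = \PGL_2(9)$ would then embed in $\Sym(6)$, and since both have order $720$ this would be an isomorphism, contradicting the fact that $\PGL_2(9)$ and $\Sym(6)$ are non-isomorphic index-$2$ extensions of $\Alt(6)$.

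The main obstacle is this last step in the analysis of $(d,q) = (2,9)$: one must reliably distinguish $\PGL_2(9)$ from $\Sym(6)$ among the three non-isomorphic order-$720$ extensions of $\Alt(6)$. The rest of the proof is a straightforward reduction through the data in Table~\ref{TableMDR}.
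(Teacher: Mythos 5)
Your argument is correct and follows essentially the same route as the paper: intersect $H$ with $\SL_d(q)$, pass to the quotient $\PSL_d(q)$ to get a subgroup of index at most $8$, and read off the exceptional pairs $(d,q)$ from the minimal permutation degrees in Table~\ref{TableMDR}. The only differences are cosmetic --- the paper handles the case $LZ=\SL_d(q)$ by perfectness ($S=S'=(LZ)'=L'\le L$) rather than by quasisimplicity, and it eliminates $(d,q)=(2,9)$ by a check against~\cite{ATLAS}, for which your core argument culminating in the distinction between $\PGL_2(9)$ and $\Sym(6)$ (settled, e.g., by the elements of order $8$ and $10$ in $\PGL_2(9)$) is a valid hand replacement.
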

\begin{proof}
Write $G=\GL_d(q)$, $S=\SL_d(q)$ and let $Z=Z(S)$. 
Now either $(H\cap S)Z/Z$ equals $S/Z$ or $(H\cap S)Z/Z$ is a proper subgroup of the simple group 
$S/Z\cong \PSL_d(q)$ of index at most $8$. In the former case, since $S$ is a perfect group, we 
find that $S=S'=((H\cap S)Z)'=(H\cap S)'\leq H\cap S\leq H$.
Checking Table~\ref{TableMDR}, we see that in the latter case we must have $d=2$ and $q\in \{2,3,4,5,7,9\}$, or $d\in \{3,4\}$ and $q=2$. If  $d=2$ and $q=9$ then $K=\GL_2(9)$ and we check using \cite{ATLAS} that if $H$ is a subgroup of index at most $8$ in $K$, then $S\leq H$. 
\end{proof}

\begin{lemma}\label{lemma2}
Let $d\geq 2$ and let $K$ be the subgroup of $\AGL_d(q)$ containing $\ASL_d(q)$ that satisfies $|\AGL_d(q):K|=\gcd(d+1,q-1)$. Suppose that  $H\leq K$ satisfies $|K:H|\leq 8$ and $H=\norm {K}H$. Then either $K=H$, or $d=2$ and $q\in \{2,3,4,5,7\}$, or $d\in \{3,4\}$ and $q=2$.
\end{lemma}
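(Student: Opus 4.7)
The plan is to exploit the semidirect product decomposition $K = V \rtimes K_0$, with $V = \mathbb{F}_q^d$ the translation subgroup and $K_0$ the subgroup of $\GL_d(q)$ containing $\SL_d(q)$ of index $\gcd(d+1,q-1)$, and to reduce twice to Lemma~\ref{lemma1}. The two natural quantities to track are $W := H \cap V$ (a subspace of $V$ normalised by $H$) and $H_0 := HV/V$, which sits inside $K/V \cong K_0$ with $|K_0:H_0| = |K:HV|$ dividing $|K:H| \le 8$.

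First I would apply Lemma~\ref{lemma1} to $H_0 \le K_0$, reducing to the case $\SL_d(q) \le H_0$; the exceptional cases of that lemma are identical to the exceptions allowed in the present statement. Since $\SL_d(q)$ acts irreducibly on $V$ and $W$ is $H_0$-invariant, one then has $W \in \{0, V\}$. The sub-case $W = 0$ is immediate: it gives $|K:H| \ge |HV:H| = |V:W| = q^d$, so $q^d \le 8$ and $(d,q) \in \{(2,2),(3,2)\}$, both already in the allowed exception list.

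In the remaining sub-case $W = V$, the decomposition $H = V \rtimes H_1$ holds with $H_1 := H \cap K_0$ and $|K_0:H_1| = |K:H| \le 8$. A second invocation of Lemma~\ref{lemma1} yields $\SL_d(q) \le H_1$. The key observation I would make next is that, because $K_0/\SL_d(q)$ embeds in the cyclic group $\GL_d(q)/\SL_d(q) \cong \mathbb{F}_q^*$, every subgroup of $K_0$ containing $\SL_d(q)$ is normal in $K_0$. Hence $H_1$ is normal in $K_0$, so $H = V \rtimes H_1$ is normal in $V \rtimes K_0 = K$, i.e.\ $\norm{K}{H} = K$, and the self-normalizing hypothesis then forces $H = K$.

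The role of the hypothesis $H = \norm{K}{H}$ is precisely to rule out the chain $\ASL_d(q) \le H \lneq K$ of normal subgroups of $K$ that would otherwise produce genuine counterexamples at this last step; I expect this to be the one conceptual obstacle of the argument. The rest — the twofold appeal to Lemma~\ref{lemma1} and the verification that both resulting exception lists match the one in the statement — is essentially bookkeeping.
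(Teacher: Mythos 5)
Your proposal is correct and follows essentially the same route as the paper: pass to $K/V\cong K_0$ and apply Lemma~\ref{lemma1}, use irreducibility of $\SL_d(q)$ on $V$ to force $H\cap V\in\{0,V\}$, dispose of $H\cap V=0$ via $q^d\le 8$, and in the remaining case deduce $H\unlhd K$ from the fact that everything above $\ASL_d(q)$ is normal (because the quotient is abelian), contradicting $H=\norm{K}{H}$. The only cosmetic difference is that your second invocation of Lemma~\ref{lemma1} for $H_1=H\cap K_0$ is redundant, since $H_1\cong HV/V$ already contains $\SL_d(q)$ by the first invocation.
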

\begin{proof}
Write $G=\AGL_d(q)$ and $S=\SL_d(q)$,  and assume that $K>H$.  Let $V$ be the socle 
of $G$. Now $|K/V:HV/V|\leq 8$ and $K/V$ is isomorphic to the subgroup of $\GL_d(q)$ 
containing $\SL_d(q)$ of index $\gcd(d+1,q-1)$. By Lemma~\ref{lemma1}, we see that 
either $d=2$ and $q\in \{2,3,4,5,7\}$, or $d\in\{3,4\}$ and $q=2$, or $SV\subseteq HV$. 
Suppose that $SV\subseteq HV$. Then the group $HV$ acts by conjugation on $V$ as a 
linear group containing $\SL_d(q)$. Therefore either $V \cap H=1$ or $V \cap H =V$. 
In the former case, $8\geq |K:H|\geq |HV:H|=|V:(V\cap H)|=q^d$ and so $(q,d)=(2,2)$ or $(2,3)$. In the latter case, $V\subseteq H$ and hence $VS\leq H$ and $H\unlhd G$. Since $H=\norm K H$, we have $K=H$, contradicting the fact that $K>H$.
\end{proof}

\begin{lemma}\label{lemma3}Let $K$ be the subgroup of $\AGL_1(q)$ of index $\gcd(2,q-1)$. Suppose that $H\leq K$ satisfies $|K:H|\leq 4$ and $H=\norm KH$. Then either $K=H$ or $q=4$.
\end{lemma}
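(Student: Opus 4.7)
The plan is to exploit the semidirect product structure $K = V \rtimes C$, where $V$ is the translation subgroup (elementary abelian of order $q=p^f$) and $C := K/V$ is cyclic of order $(q-1)/\gcd(2,q-1)$, embedded in $\mathbb{F}_q^*$ and acting on $V$ by multiplication. Setting $W := V\cap H$, the second isomorphism theorem gives $|K:H| = |K:HV|\cdot |V:W|$, and since $V$ is an elementary abelian $p$-group, $|V:W|$ is a power of $p$. If $V \le H$, then $H/V$ is a subgroup of the cyclic group $C$, hence normal in $C$, forcing $H \trianglelefteq K$; combined with the self-normalizing hypothesis this immediately yields $H = K$.

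So assume $V \not\le H$, equivalently $|V:W|\ge p \ge 2$. If in addition $HV < K$, then $|K:HV| \ge 2$, and $|K:HV|\cdot|V:W| \le 4$ forces both factors to equal $2$; but $|V:W| = 2$ requires $p=2$, whence $|C| = q-1$ is odd and $C$ admits no subgroup of index $2$, a contradiction. Hence $HV = K$, so $W$ is a $C$-invariant $\mathbb{F}_p$-subspace of $V \cong \mathbb{F}_q$. The key computation is to verify that $C$ acts irreducibly on $V$: for $q$ even, $C = \mathbb{F}_q^*$ is generated by a primitive root and trivially $\mathbb{F}_p[C] = \mathbb{F}_q$; for $q$ odd, $C = (\mathbb{F}_q^*)^2$ is generated by $g^2$ for a primitive root $g$, and the elementary inequality $(q-1)/2 > p^{f-1}-1$ (valid for all odd primes $p$) shows that $g^2 \notin \mathbb{F}_{p^e}$ for any $e<f$, so again $\mathbb{F}_p[C] = \mathbb{F}_q$. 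Consequently $W \in \{0, V\}$, and since $V \not\le H$ we must have $W=0$, so $|V| = |V:W| \le 4$ and hence $q \in \{2,3,4\}$.

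The main obstacle is simply inspecting the three remaining small cases. For $q=2$ and $q=3$, $K$ is cyclic of order $2$ or $3$ respectively, and every self-normalizing subgroup of a cyclic group is the whole group, so $H=K$. For $q=4$, $K = \AGL_1(4) \cong \Alt(4)$, whose proper subgroups of index at most $4$ are the normal Klein four-subgroup $V$ (of index $3$, with $\norm{K}{V} = K \ne V$) and the four Sylow $3$-subgroups (of index $4$, each self-normalizing since they are pairwise conjugate). Only the latter satisfy the hypotheses of the lemma, yielding the single exceptional case $q=4$ asserted by the statement.
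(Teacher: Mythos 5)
Your proof is correct and follows essentially the same route as the paper's: both decompose $K=V\rtimes C$, show that $V\cap H$ is normal in $K=HV$ and hence trivial or all of $V$ by irreducibility of the $C$-action, and thereby reduce to $q\le 4$. You are somewhat more explicit than the paper in two places --- verifying that $C$ really does act irreducibly on $V$ (via $\mathbb{F}_p[C]=\mathbb{F}_q$) and working through the surviving small cases $q\in\{2,3,4\}$ --- but the underlying argument is the same.
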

\begin{proof}
Write $G=\AGL_1(q)$ and assume that $K>H$. Let $V$ be the subgroup of $G$ of order $q$. 
Since $|K:H|\leq 4$ and $H=\norm KH$, it follows that $|K:H|=3$ or $4$ and $H$ is a maximal subgroup of $K$. 
If $HV=H$, then 
$V\leq H$ and $H\unlhd G$, which is a contradiction since $H=\norm KH$. Thus $H<HV\leq K$ and hence $K=HV$.

Since $V$ is abelian, we have $V\cap H\unlhd HV=K$. Further, since $V\cap H\leq V$ and 
$K$ acts as a cyclic group of order $(q-1)/\gcd(2,q-1)$ on $V$, it follows that $V\cap 
H=1$ or $V\cap H=V$. In the latter case, $V\leq H$ and $H\unlhd K$, which contradicts 
the fact that $H=\norm K H$. So $V\cap H=1$. Thus $|K:H|=|HV:H|=|V:(V\cap H)|=|V|=q$, so $q\in \{3,4\}$. Finally, it is an easy computation to see that if $q=3$, then $K=V$ and $H$ must be $K$.
\end{proof}

\begin{lemma}\label{lemma:prel}
Let $H$ be a proper subgroup of $T=\PSL_d(q)$ such that $H=\norm TH$ and $|T:H|/4\leq \meo(\Aut(T))$. 
Then one of the following holds:
\begin{enumerate}
\item[(i)] $H$ is conjugate to the stabilizer of a point or a hyperplane of the 
projective space $\mathrm{PG}_{d-1}(q)$;
\item[(ii)] $d=2$ and $q \in \{4,5,7,8,9,11,16,19,25,49\}$, or $d=3$ and $q\in \{2,3,4,5,7\}$, or $d=4$ and $q\in\{2,3\}$, or $d=5$ and $q=2$.
\end{enumerate}
 \end{lemma}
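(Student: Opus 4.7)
The plan is to exploit the structure theory of low-index subgroups of $\PSL_d(q)$ together with Lemmas~\ref{lemma2} and~\ref{lemma3}. First, by Theorem~\ref{cor:PGammaL}, we have $\meo(\Aut(T))=(q^d-1)/(q-1)$ for all $(d,q)\neq(2,4),(3,2)$, while the two exceptional pairs $(2,4)$ and $(3,2)$ are both already included in list (ii). Thus we may assume $|T:H|\leq 4(q^d-1)/(q-1)$, that is, $H$ is a self-normalising subgroup of $T$ whose index is at most four times the minimum permutation degree $m(T)$ recorded in Table~\ref{TableMDR}.

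The key reduction is to show that any maximal subgroup $M$ of $T$ containing $H$ is (conjugate to) a parabolic $P_1$ or $P_{d-1}$, outside a bounded set of small exceptional $(d,q)$. To do this I would go through the Aschbacher classes $\C_1,\ldots,\C_8$ and the class $\mathcal{S}$ of almost simple maximal subgroups, using the standard index estimates in~\cite{KL} to verify that the non-parabolic maximal subgroups, and the parabolic stabilisers of $k$-subspaces with $2\leq k\leq d-2$ (whose index is the Gaussian binomial $\binom{d}{k}_q$, much larger than $4(q^d-1)/(q-1)$), all have index exceeding $4(q^d-1)/(q-1)$ for $d$ and $q$ outside an explicit finite list. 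The residual small pairs $(d,q)$ are then checked directly using \texttt{magma} or~\cite{ATLAS}, and they account for the entire exceptional list in (ii), including in particular the atypical $\PSL_2(q)$ cases for $q\in\{5,7,8,9,11,16,19,25,49\}$ where either the minimum degree in Table~\ref{TableMDR} is smaller than $q+1$ or the bound $4(q+1)$ admits subgroups beyond the Borel.

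Finally, suppose $M=P_1$ (the case $M=P_{d-1}$ being dual). Then $|M:H|\leq 4$, and since $H=\norm TH$ we also have $H=\norm MH$. The point stabiliser $P_1$ in $T$ is isomorphic to the semidirect product $V\rtimes K$, where $V$ is elementary abelian of order $q^{d-1}$ and $K$ is the subgroup of $\GL_{d-1}(q)$ containing $\SL_{d-1}(q)$ of index $\gcd(d,q-1)$; in other words, $P_1$ is precisely the group denoted $K$ in Lemma~\ref{lemma2} (with the substitution $d\mapsto d-1$) when $d\geq 3$, and in Lemma~\ref{lemma3} when $d=2$. Applying Lemma~\ref{lemma2} to $H\leq P_1$ yields either $H=P_1$, giving conclusion (i), or $(d,q)\in\{(3,2),(3,3),(3,4),(3,5),(3,7),(4,2),(5,2)\}$, each of which is listed in (ii). Applying Lemma~\ref{lemma3} in the case $d=2$ yields either $H=P_1$ (conclusion (i)) or $q=4$, again listed in (ii).

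The main obstacle is the reduction step asserting that $M\in\{P_1,P_{d-1}\}$: this requires a careful uniform treatment of all Aschbacher families, and particular vigilance for the small-rank cases of $\PSL_2(q)$ and $\PSL_3(q)$ where the minimum permutation degree is atypical or where there are accidental isomorphisms with alternating or sporadic groups. Once that reduction is in place, the final step is an essentially formal application of Lemmas~\ref{lemma2} and~\ref{lemma3}, which were designed with exactly this situation in mind.
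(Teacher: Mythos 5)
Your proposal follows essentially the same route as the paper: bound the index of a maximal overgroup $K$ of $H$, show that $K$ must be a point or hyperplane stabiliser outside an explicit finite list of pairs $(d,q)$, identify that stabiliser with the appropriate subgroup of $\AGL_{d-1}(q)$, and finish by applying Lemmas~\ref{lemma2} and~\ref{lemma3} exactly as you describe. The only cosmetic differences are that the paper outsources the Aschbacher-class analysis to Alavi's classification of the maximal subgroups $M$ of $\GL_d(q)$ satisfying $|\GL_d(q)|\leq |M|^3$ (supplemented by Liebeck's bound for class $\mathcal{C}_9$ and explicit subgroup lists for $d=3,4$), and handles $d=2$ separately via Suzuki's description of the subgroup lattice of $\PSL_2(q)$ rather than through the general Aschbacher framework.
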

\begin{proof}
Set $q=p^f$, with $p$ a prime and $f\geq 1$. Let $K$ be a maximal subgroup of $T$ 
with $H \le K$. Clearly, $|T:H|\geq |T:K|$ and hence
\begin{equation}\label{QE}
|K|\geq \frac{|T|}{4\meo(\Aut(T))}. 
\end{equation}
In the first part of the proof, we assume that (i) does not hold for the group $K$ and   show that $(d,q)$ must be as in (ii).

First we consider separately  the case that
$d=2$.  We refer to the
description of the lattice of subgroups of $T$  given
in~\cite[Theorem~$6.25$,~$6.26$]{SuzukiBook}. Every subgroup $H$ of $T$ is
either a subgroup of a dihedral group of order $2(q+1)/\gcd(2,q-1)$ or
$2(q-1)/\gcd(2,q-1)$ (if $H$ is as
in~\cite[Theorem~$6.25(a)$]{SuzukiBook}), or a subgroup of a Borel
subgroup of order $(q-1)q/\gcd(2,q-1)$ (if $H$ is as
in~\cite[Theorem~$6.25(b)$]{SuzukiBook}), or isomorphic to $\Alt(4)$,
$\Sym(4)$ or $\Alt(5)$ (if $H$ is as
in~\cite[Theorem~$6.25(c)$]{SuzukiBook}), or isomorphic to
$\PSL_2(q_0)$ or to $\PGL_2(q_0)$ (if $H$ is as
in~\cite[Theorem~$6.25(d)$]{SuzukiBook}, where $q_0$ is a power of $p$ and $q_0^e=q$ for some integer $e$ dividing $f$). Theorem~$6.26$
in~\cite{SuzukiBook}  describes in detail the conditions when each of these
cases can arise. 
For each of the three cases~$(b),(c),(d)$, it can be verified with a tedious computation (using Table~\ref{newtable}) that the inequality $|T:K|/4\leq \meo(\Aut(T))$ is only satisfied if 
$q\in\{4,5,7,8,9,11,16,19,25,49\}$.

We now suppose that $d\geq 3$. Let
$\overline{K}$ be the preimage of $K$ in $\SL_d(q)$ and let
$M$ be a maximal subgroup of $\GL_d(q)$ containing $\overline{K}Z$, where $Z$ is the centre of $\GL_d(q)$. We have $|M|\geq |\overline{K}Z|=(q-1)|K|$. Assume that $|M|<|\GL_d(q)|^{1/3}$. Then~\eqref{QE} implies that
\begin{equation}\label{QE1} 
|\mathrm{GL}_d(q)|^{1/3}>|M|\geq (q-1)|K|\geq\frac{(q-1)|T|}{4\meo(\Aut(T))}.
\end{equation}
A direct computation shows that~\eqref{QE1} is satisfied only if $(d,q)=(3,2)$, 
which is one of the values in (ii). Therefore we may assume that $|\GL_d(q)|^{1/3}
\leq |M|$. Furthermore, for the rest of the proof we assume that $(d,q)\neq (3,2)$ 
and so, according to Table~\ref{newtable}, $\meo(\Aut(T))=(q^d-1)/(q-1)$.

Alavi~\cite[Theorem~$9.1.1$]{Hassan}
classified the maximal subgroups $M$ of $\GL_d(q)$ not containing $\SL_d(q)$ with $|\GL_d(q)|\leq |M|^3$,
listing the possible subgroups according to their ``Aschbacher class'': a detailed description for each
class is given. Using the inequality $|M|\geq (q-1)|K|$, another (rather tedious) computation shows that, for each
of the subgroups listed in~\cite[Theorem~$9.1.1$]{Hassan}  that are not contained in the
Aschbacher class $\mathcal{C}_9$, the
inequality $|T:K|/4\leq (q^d-1)/(q-1)$ is satisfied only in the case that $K$ is conjugate 
to the stabilizer of a point or a hyperplane of $\mathrm{PG}_{d-1}(q)$, or $(d,q)$ is as in (ii). It remains to consider the case that $M$ is contained in the Aschbacher class $\mathcal{C}_9$. In this case, Alavi's classification implies that $d\leq 9$. 

For the rest of the proof of our claim we use Liebeck's result~\cite[Theorem 4.1]{Liebeck11}:  if $H$ is a 
maximal subgroup of $T$ in the Aschbacher class $\mathcal{C}_9$, then either $|H|< q^{3d}$, or 
$H=\Alt(m)$ or $\Sym(m)$ with $m=d+1$ or $d+2$. A straightforward calculation shows that $|\PSL_d(q)|/(4(d+2)!)\leq (q^d-1)/(q-1)$ if and only if $d\in \{3,4\}$ and
$q=2$ or  $(d,q)=(3,3)$. However since $|\PSL_3(3)|$ is not divisible by $d+2=5$, the case $(d,q)=(3,3)$  does not actually occur. In particular, we may assume that $|H|<q^{3d}$. 
Since $|T:H|/4\leq
(q^d-1)/(q-1)$, we have
\begin{eqnarray*}
|T|&\leq&\frac{4(q^d-1)}{q-1}|H|<\frac{4(q^d-1)}{q-1}q^{3d},
\end{eqnarray*} 
which implies that
$d\leq 4$. In particular, we may
assume that $d=3$ or $d=4$. The complete list of the subgroups of $\PSL_3(q)$ and $\PSL_4(q)$ in the Aschbacher class $\mathcal{C}_9$ is contained in Sections~$5.1.2$ and~$5.1.3$ of~\cite{LPS} and in~\cite[Theorem~$1.1$]{Bloom} (for $d=3$ and $q$ odd). A case-by-case analysis now shows that $|T:K|/4 > (q^d-1)/(q-1)$. We have now found all of the values of $(d,q)$ for which (i) does not hold for the group $K$.

Therefore, to conclude the proof we may assume that $K$ is the stabilizer of either 
a point or a hyperplane of $\mathrm{PG}_{d-1}(q)$, and that $H<K$. Now $K$  is isomorphic 
to a subgroup of $\AGL_{d-1}(q)$, namely the subgroup $\tilde{K}$ of $\AGL_{d-1}(q)$ 
containing $\ASL_{d-1}(q)$  that satisfies $|\AGL_{d-1}(q):\tilde{K}|=\gcd(d,q-1)$. 
 Since $H \le T$ and  $H=\norm TH$, we have $H=\norm KH$. Applying Lemma~\ref{lemma2} (for $d\geq 3$) and Lemma~\ref{lemma3} (for $d=2$) implies that $(d,q)=(2,4)$, $d=3$ and $q\in \{2,3,4,5,7\}$, or $d\in \{4,5\}$ and $q=2$. 
\end{proof}

The next proposition is the main ingredient in our proof of Theorem~\ref{main} for projective special linear groups.
\begin{proposition}\label{PSL}
Let $G$ be a primitive group on $\Omega$ of degree $n$ with socle
$\PSL_d(q)$. Assume that the action of $G$ on $\Omega$ is not permutation isomorphic 
to the action on the points or on the hyperplanes of the projective space $\mathrm{PG}_{d-1}(q)$, 
and that  $n/4\leq \meo(\Aut(\PSL_d(q)))$. Then $d=2$ and
$q\in \{4,5,7,8,9,11,16,19,25,49\}$, or $d=3$ and $q\in\{2,3,4\}$, or $d=4$ and $q\in\{2,3\}$.  
\end{proposition}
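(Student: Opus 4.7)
The plan is to derive Proposition~\ref{PSL} almost directly from Lemma~\ref{lemma:prel} after reducing to a question about self-normalizing subgroups of $T$, and then mop up three small pairs $(d,q)$ by direct inspection.

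First I would set $H=T_\omega$ for $\omega\in\Omega$. Since $G$ is primitive with socle $T$, the socle is transitive on $\Omega$, so $n=|T:H|$ and the hypothesis becomes $|T:H|/4\leq \meo(\Aut(T))$. A standard argument shows that $H=\norm T H$: since $T\trianglelefteq G$, the subgroup $G_\omega$ normalizes $H$, so $G_\omega\leq \norm G H$; maximality of $G_\omega$ in $G$ forces $\norm G H\in\{G_\omega,G\}$, and $\norm G H=G$ would make $H$ normal in $G$ and hence (by $T$-transitivity, acting trivially on $\Omega$) trivial, forcing $G$ to be regular of prime order, which is incompatible with $T=\PSL_d(q)$.

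Next I would invoke Lemma~\ref{lemma:prel}. It gives two alternatives: either (i) $H$ is conjugate in $T$ to the stabilizer of a point or a hyperplane of $\mathrm{PG}_{d-1}(q)$, or (ii) the pair $(d,q)$ lies in the explicit list of the lemma. In case (i) I need to show that the action of $G$ on $\Omega$ is permutation isomorphic to the action on points or hyperplanes, contradicting the hypothesis of the proposition. For $d=2$ this is immediate, since $\Aut(T)$ admits no graph automorphism and acts naturally on the projective line. For $d\geq 3$, I would argue as follows: suppose $H=T_P$ with $P$ a point. The graph automorphism $\tau$ interchanges points and hyperplanes of $\mathrm{PG}_{d-1}(q)$, so $H^\tau=T_{P^\tau}$ is a hyperplane stabilizer and in particular $\tau\notin\norm{\Aut(T)}{H}$. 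Consequently $G_\omega=\norm G H$ is contained in $G\cap\mathrm{P\Gamma L}_d(q)$. If $G$ were to contain $\tau$, then $|G:G\cap\mathrm{P\Gamma L}_d(q)|=2$ and $|G\cap\mathrm{P\Gamma L}_d(q):G_\omega|$ would already equal the number of projective points $|T:H|$, forcing $|G:G_\omega|=2|T:H|$ and contradicting $T$-transitivity on $\Omega$. Hence $G\leq\mathrm{P\Gamma L}_d(q)$ and the action of $G$ on $\Omega$ is indeed the natural action on points; the case of a hyperplane stabilizer is identical. This rules out (i).

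It then remains to compare Lemma~\ref{lemma:prel}(ii) with the list in the proposition: three pairs $(d,q)\in\{(3,5),(3,7),(5,2)\}$ appear in (ii) but not in the conclusion, so they must be excluded. For each of these I would invoke the ATLAS (or a short \texttt{magma} check) to list the maximal subgroups of $T$ and note that, in all three cases, the only proper subgroups $H\leq T$ with $|T:H|/4\leq \meo(\Aut(T))$ — explicitly, $\meo(\Aut(T))$ equals $31$, $57$ and $31$ respectively — are the point and hyperplane stabilizers (all other maximal subgroups have far smaller order, hence much larger index). Thus under our hypothesis $H$ must be a point or hyperplane stabilizer, and the argument of the previous paragraph applies to produce a contradiction, eliminating these three pairs.

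The main obstacle I expect is the careful bookkeeping in case (i) for $d\geq 3$: one must be sure that whenever $H$ is a point (or hyperplane) stabilizer, the action of the full group $G$ — and not merely of $T$ — is permutation isomorphic to the natural projective action. This is exactly the place where transitivity of the socle together with the incompatibility of $\tau$ with point stabilizers does the work.
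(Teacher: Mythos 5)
Your proposal is correct and follows essentially the same route as the paper: reduce to the finite list via Lemma~\ref{lemma:prel} (whose case (i) is excluded because, as you spell out and the paper leaves implicit, a point or hyperplane stabilizer as $T_\omega$ forces $G\leq \mathrm{P\Gamma L}_d(q)$ and the natural action), and then eliminate the leftover pairs $(3,5)$, $(3,7)$, $(5,2)$ by direct computation, exactly as the paper's \texttt{magma} check does. The only blemishes are cosmetic: if $H=1$ it is $T$ (not $G$) that would be regular, and for the three leftover pairs one must also rule out non-maximal self-normalizing subgroups of small index inside the parabolics, both of which are absorbed by the computational check you already invoke.
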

\begin{proof}
From Table~\ref{newtable} and Lemma~\ref{lemma:prel}, we see that we may assume that $d=2$ and $q\in \{4,5,7,8,9,11,16,19,25,49\}$, or $d=3$ and $q\in\{2,3,4,5,7\}$, or $d=4$ and $q\in \{2,3\}$, or $d=5$ and $q=2$. Now a direct inspection with \texttt{magma}~\cite{magma}, on all the almost simple groups $G$ with socle $T$ and on all maximal subgroups of $G$, shows that only the cases listed in the proposition actually arise.
\end{proof}

For the alternating groups, we will use the following bound in the proof of Theorem~\ref{main}.

\begin{lemma}\label{bound-N}
Let $a,b$ be positive integers, let $m=ab$ and suppose that $a \ge 2$,  $b \ge 2$ and $m \ge 16$. Then
\[\frac{m!}{(a!)^bb!} \ge (1.7)^m.\]
\end{lemma}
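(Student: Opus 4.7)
The plan is to exploit the combinatorial meaning of the quantity: $f(a,b) := m!/((a!)^{b}b!)$ counts the unordered partitions of $\{1,\ldots,m\}$ into $b$ blocks of size $a$. Conditioning on the block containing the element $1$ yields the recursion $f(a,b)=\binom{ab-1}{a-1}\,f(a,b-1)$, and hence the product formula
\[
f(a,b)=\prod_{k=1}^{b}\binom{ka-1}{a-1}.
\]
My strategy is to prove the inequality $f(a,b)\geq 1.7^{m}$ by induction on $b$ for each fixed $a\geq 2$, taking as base case the smallest admissible $b$, namely $b_{0}(a):=\max(2,\lceil 16/a\rceil)$.

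For the inductive step I would show that the ratio $f(a,b+1)/f(a,b)=\binom{(b+1)a-1}{a-1}$ is at least $1.7^{a}$ whenever $a\geq 2$ and $b\geq 2$. Using the elementary bound $\binom{n}{k}\geq (n/k)^{k}$ together with $(3a-1)/(a-1)\geq 3$, one has
\[
\binom{(b+1)a-1}{a-1}\geq \binom{3a-1}{a-1}\geq 3^{a-1}.
\]
The remaining inequality $3^{a-1}\geq 1.7^{a}$ rearranges to $a\geq \log 3/\log(3/1.7)\approx 1.93$, which holds for every $a\geq 2$, completing the inductive step.

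It remains to dispatch the base cases. For each $a\in\{2,3,4,5,6,7\}$ the minimal value is $b_{0}(a)\in\{8,6,4,4,3,3\}$, so there are only six pairs $(a,b_{0}(a))$ to verify, and in each case $f(a,b_{0}(a))$ can be computed by hand and directly compared with $1.7^{ab_{0}(a)}$. For $a\geq 8$ the base case is $b=2$, where $f(a,2)=\binom{2a-1}{a-1}$, and I would run a secondary induction on $a$: a direct computation gives $f(a+1,2)/f(a,2)=2(2a+1)/(a+1)\geq 10/3>1.7^{2}$ for all $a\geq 2$, while the initial case $f(8,2)=\binom{15}{7}=6435$ exceeds $1.7^{16}\approx 4866$.

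The principal obstacle is that the constant $1.7$ lies close to the asymptotic growth rate of $f(a,b)$, which behaves like $b^{ab-b}e^{b}$ up to polynomial corrections, so a single Stirling-based estimate is too coarse at the boundary of the parameter region, specifically at the corners $a=2$ or $b=2$. The case split above is arranged so that those two corners are precisely the places where small numerical checks are performed, while everywhere else the inductive ratio bound has comfortable slack.
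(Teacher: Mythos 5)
Your proof is correct, and it takes a genuinely different route from the paper. The paper quotes \cite[Lemma 5.13]{BGMRS} to dispose of the range $a\geq 4$, $b\geq 3$ (where even $(2.2)^m$ holds) and then handles the remaining strips $a\in\{2,3\}$ and $b=2$ by direct Stirling estimates. You instead use the combinatorial interpretation of $m!/((a!)^b b!)$ as the number of partitions into $b$ blocks of size $a$, the telescoping product $f(a,b)=\prod_{k=1}^{b}\binom{ka-1}{a-1}$, and an induction on $b$ whose step reduces to $\binom{(b+1)a-1}{a-1}\geq\binom{3a-1}{a-1}\geq 3^{a-1}\geq 1.7^{a}$; all of these inequalities check out for $a\geq 2$, $b\geq 2$ (in particular $3\geq 1.7^2=2.89$ covers $a=2$). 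The six base cases you list do verify with large margins (e.g.\ $f(2,8)=15!!=2027025$ and $f(4,4)=2627625$ against $1.7^{16}\approx 4866$), and your secondary induction on $a$ for $b=2$ is sound since $f(a+1,2)/f(a,2)=2(2a+1)/(a+1)\geq 10/3>1.7^{2}$ and $f(8,2)=\binom{15}{7}=6435>1.7^{16}$; I would only ask that you actually record these finitely many numerical checks rather than asserting they can be done. What your approach buys is a self-contained, Stirling-free argument that makes visible exactly where the constant $1.7$ is tight (the $b=2$ row, where the per-step ratio $10/3$ barely clears $1.7^2$); what the paper's approach buys is brevity, since the bulk of the parameter range is delegated to an already-published estimate.
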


\begin{proof}
In~\cite[Lemma 5.13]{BGMRS}, it is proved that if $a\geq 4$, $b\geq 3$ and $m\geq 16$, then $m!/(a!^bb!)\geq (2.2)^m$. In particular, we may assume that $b=2$ or $a\leq 3$. Suppose that $a=2$ or $a=3$.
Stirling's formula~\cite{Rob} implies that for every $m \ge 1$, we have
\[\sqrt{2\pi m}\ e^{1/(12m+1)}(m/e)^m \le m! \le \sqrt{2\pi m}\ e^{1/(12m)}(m/e)^m.\]
Using these inequalities it follows immediately that $2^{m/2}(m/2)!\geq 3!^{m/3}(m/3)!$ for all $m\geq 16$. Thus we have
\begin{eqnarray*}
\frac{m!}{(a!)^bb!}&\geq&\frac{m!}{2^{m/2}(m/2)!}\geq \frac{\sqrt{2\pi m}e^{1/(12m+1)}(m/e)^m}{2^{m/2}\sqrt{\pi m}e^{1/6m}(m/(2e))^{m/2}}\\
&=&\sqrt{2}e^{1/(12m+1)-1/6m}(m/e)^{m/2}\geq 2^m,
\end{eqnarray*}
where the last inequality follows from a direct computation.

Now suppose that $b=2$. We have
\begin{eqnarray*}
\frac{m!}{(m/2)!^22!}&\geq&\frac{\sqrt{2\pi m}e^{1/(12m+1)}(m/e)^m}{2\pi me^{2/6m}(m/(2e))^{m}}\\
&=&\frac{2^m}{\sqrt{2\pi m}}e^{1/(12m+1)-1/3m}\geq (1.7)^m
\end{eqnarray*}
as required (again the last inequality follows from a direct computation).
\end{proof}

\begin{theorem}\label{thm:mainAS}
Let $G$ be a finite primitive group on $\Omega$ of degree $n$ of \emph{AS} type. 
If $G$ contains a permutation $g$ with $|g|\geq n/4$, then the socle $T$ of $G$ 
is either $\Alt(m)$ in its action on the $k$-subsets of $\{1,\ldots,m\}$, for some $k$, or 
$\PSL_d(q)$ in its natural action on the points or on the hyperplanes of the 
projective space $\mathrm{PG}_{d-1}(q)$, or $T$ is one the groups in Table~$\ref{exceptions}$.
\end{theorem}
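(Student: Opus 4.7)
The plan is to split on the socle $T$ of $G$ and reduce to a finite number of small cases that can be settled by direct computation. Fixing a point stabilizer $H$, so $n=|G:H|$, and noting that $T$ acts faithfully on $\Omega$, we have $n\geq m(T)$. Since $\meo(G)\leq \meo(\Aut(T))$, the hypothesis $\meo(G)\geq n/4$ yields $\meo(\Aut(T))\geq m(T)/4$. Applying Theorem~\ref{main2}, $T$ must be $\Alt(m)$ for some $m\geq 5$, $\PSL_d(q)$ for some $d,q$, or one of the twelve groups in Table~\ref{exceptionsmain2}---namely the sporadic groups $M_{11}$, $M_{12}$, $M_{22}$, $M_{23}$, $M_{24}$, $HS$ and the small classical groups $\PSU_3(3)$, $\PSU_3(5)$, $\PSU_4(3)$, $\PSp_6(2)$, $\PSp_8(2)$, $\PSp_4(3)$.

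For each of those twelve exceptional socles, both $T$ and $\Aut(T)$ are explicitly tabulated in the ATLAS, so I would iterate through all conjugacy classes of core-free maximal subgroups $H$ of each almost simple group $G$ with $T\leq G\leq \Aut(T)$ using \texttt{magma}, compute $n=|G:H|$ and $\meo(G)$, and retain those with $\meo(G)\geq n/4$. These surviving pairs produce precisely the corresponding rows of Table~\ref{exceptions}. For $T=\PSL_d(q)$, Proposition~\ref{PSL} does the reduction: either the action is permutationally isomorphic to that of $G$ on the points or hyperplanes of $\PG_{d-1}(q)$ (which is conclusion (2) of Theorem~\ref{main} and is permitted), or $(d,q)$ belongs to an explicit finite list ($d=2$ with $q\in\{4,5,7,8,9,11,16,19,25,49\}$; $d=3$ with $q\in\{2,3,4\}$; or $d=4$ with $q\in\{2,3\}$). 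For each group on this list I would again enumerate the primitive actions in \texttt{magma} and keep those satisfying the bound, yielding the $\PSL_d(q)$ entries in Table~\ref{exceptions}.

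The substantial case is $T=\Alt(m)$ with $m\geq 10$ (for $5\leq m\leq 9$ the group is already listed in Table~\ref{exceptions} and nothing further is to be proved). The O'Nan--Scott classification of maximal subgroups of $\Sym(m)$ and $\Alt(m)$ partitions the stabilizer $H$ into three types: (a) intransitive, $H=(\Sym(k)\times \Sym(m-k))\cap G$, giving the action on $k$-subsets, which is case (1) of Theorem~\ref{main} and is permitted; (b) imprimitive, $H=(\Sym(a)\wr\Sym(b))\cap G$ with $m=ab$ and $a,b\geq 2$; and (c) primitive, where $H$ acts primitively on $\{1,\ldots,m\}$ without containing $\Alt(m)$. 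For type (b), Lemma~\ref{bound-N} gives $n\geq (1.7)^m$ for $m\geq 16$, while Theorem~\ref{Landau} gives $\log\meo(\Sym(m))\leq \sqrt{m\log m}(1+o(1))$; the former eventually dominates $4\meo(\Sym(m))$, so $n/4>\meo(\Sym(m))$ for every $m$ beyond an explicit threshold, leaving a finite range to verify directly. For type (c), standard order bounds for primitive subgroups of $\Sym(m)$ not containing $\Alt(m)$ (Praeger--Saxl's $|H|\leq 4^m$, or Mar\'oti's sharper bounds) yield comparable exponential lower bounds on $n$, again dominating the sub-exponential growth of $\meo(\Sym(m))$ for large $m$.

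The main obstacle is the alternating group case: although the asymptotic comparison is uniform, in each subcase of type (c) (affine $\AGL_d(p)$ with $m=p^d$; diagonal with $m=|S|^{k-1}$ for a non-abelian simple $S$; product action with $m=a^b$; and almost simple primitive $H$) one must pin down the threshold $m_0$ below which the asymptotic inequality fails, and then run an exhaustive computer search over all primitive actions of $\Alt(m)$ and $\Sym(m)$ for $10\leq m<m_0$ to confirm that no action other than the $k$-subset action satisfies $n/4\leq\meo(G)$. The analyses for the sporadic, small classical, and $\PSL_d(q)$ socles are, by contrast, essentially immediate given Proposition~\ref{PSL} and the reduction provided by Theorem~\ref{main2}.
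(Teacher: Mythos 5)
Your proposal is correct and follows essentially the same route as the paper: reduce via Theorem~\ref{main2} to $T=\Alt(m)$, $T=\PSL_d(q)$, or a group already listed in Table~\ref{exceptions}; invoke Proposition~\ref{PSL} for the linear case; and for $\Alt(m)$ with $m\geq 10$ split the point stabilizer into intransitive, imprimitive and primitive types, using Lemma~\ref{bound-N} and the Praeger--Saxl bound $|G_\omega|\leq 4^m$ against the Landau upper bound on $\meo(\Sym(m))$, finishing with a computer check of the small residual range. The only (harmless) excess is that the twelve exceptional socles need no further analysis for this statement since they already appear in Table~\ref{exceptions}, and the primitive-stabilizer case needs no subdivision by O'Nan--Scott type because the $4^m$ bound applies uniformly.
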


\begin{proof}
Since all the groups in Table~\ref{exceptionsmain2} are contained in Table~\ref{exceptions}, using Theorem~\ref{main2}, we may assume that $T$ is either an alternating group or a projective special linear group. For $T\cong \PSL_d(q)$, the theorem follows from Proposition~\ref{PSL}.

So we may assume that $T\cong \Alt(m)$ for some $m\geq 5$. Since $\Alt(m)$ is contained in Table~\ref{exceptions} for $m=5,\ldots,9$, we may assume that $m\geq 10$. Now, for  $\omega\in \Omega$, the stabilizer  $G_\omega$ is either intransitive, imprimitive, or primitive in its action on $\{1,\ldots,m\}$. If it is intransitive, then the action of $T$ is permutation equivalent to the action on the $k$-subsets of $\{1,\ldots,m\}$ (for some $1\leq k< m/2$). If  $G_\omega$ is imprimitive in its action on $\{1,\ldots,m\}$, then we can identify the elements of $\Omega$ with the partitions of a set of cardinality $m$ into $b$ parts of cardinality $a$, where $m=ab$ and $a,b\geq 2$. Using Lemma~\ref{bound-N}, if $m\geq 16$, then we have $n=|\Omega|=m!/(a!^bb!)\geq (1.7)^m$. Using this bound and the upper bound for $\meo(\Sym(m))$ in Theorem~\ref{Landau}, we see that the inequality
\[|\Omega|/4\leq \meo(\Sym(m))\]
is never satisfied. For the remaining cases  ($m=11,\ldots,15$) a computation in \texttt{magma}  shows that no examples arise.

Finally, suppose that $G_\omega$ is primitive in its action on $\{1,\ldots,m\}$. In this case, by~\cite{PS}, we have $|G_\omega|\leq 4^m$ and $n=|\Omega|\geq m!/4^m$. Again, using the upper bound in Theorem~\ref{Landau}, we see that  the inequality $|\Omega|/4\leq \meo(\Sym(m))$ is only satisfied for $m\leq 15$. For the remaining cases ($m=11,\ldots,14$) a computation in \texttt{magma}  shows that no examples arise.
\end{proof}

\subsection{Proof of Theorem~\ref{main} for primitive groups of SD type}\label{reduction}

\begin{lemma}\label{another}
Let $T$ be a finite non-abelian simple group. Then $4|\Out(T)|<|T|^{2/3}$.
\end{lemma}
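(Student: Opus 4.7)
The plan is to verify the inequality $4|\Out(T)|<|T|^{2/3}$ separately for the three families of non-abelian finite simple groups.

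First I would dispatch the alternating and sporadic groups. For $T=\Alt(m)$ with $m\geq 5$ we have $|\Out(T)|\leq 4$, with equality only at $m=6$, so $4|\Out(T)|\leq 16$. A direct check disposes of $m\in\{5,6\}$ (using $60^{2/3}>15$ and $360^{2/3}>50$), and the inequality is immediate for $m\geq 7$ since $|T|=m!/2$ grows far faster than any constant. For sporadic $T$ (including the Tits group), $|\Out(T)|\leq 2$ by inspection of~\cite{ATLAS}, so $4|\Out(T)|\leq 8$, while $|T|\geq |M_{11}|=7920$ gives $|T|^{2/3}>400$.

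For the Lie-type family, the plan is to exploit that $|\Out(T)|$ grows only logarithmically in $q$, whereas $|T|$ grows polynomially in $q$ of degree at least $3$. Writing $q=p^f$ and using $|\Out(T)|=dfg$ from~\cite[Table~5]{ATLAS}, where $d$ is the diagonal part (bounded above in terms of the untwisted Lie rank $r$), the field factor is at most $3f$, and $g\in\{1,2,6\}$, one obtains a crude but sufficient bound of the form $|\Out(T)|\leq 18(r+1)f$. Combining this with the standard polynomial formula for $|T|$ (comparable lower bounds for $|T|$ itself can be read off from Table~\ref{TableMDR} together with the orders of the point stabilisers) yields the inequality for all Lie-type $T$ outside a finite list of small cases, since for fixed rank the right-hand side dominates any polynomial of the form $cf\log q$ once $q$ is moderately large.

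The main obstacle lies in the rank-$1$ groups $\PSL_2(q)$, $\PSU_3(q)$, ${}^2B_2(q)$, and ${}^2G_2(q)$ at small $q$, where the polynomial gap is narrowest. For these I would tabulate $|T|$ and $|\Out(T)|$ from~\cite{ATLAS} (e.g.\ for $\PSL_2(q)$ with $q\leq 49$) and verify the inequality directly. In every case the ratio $|T|^{2/3}/(4|\Out(T)|)$ is bounded below by a constant strictly exceeding $1$, attaining its minimum, approximately $1.91$, at $\Alt(5)\cong \PSL_2(5)$, and increasing rapidly as $q$ or the rank grows; hence no exceptions occur.
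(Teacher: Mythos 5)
Your proposal is correct and follows essentially the same route as the paper, which simply invokes a case-by-case analysis over the families of simple groups using the order and outer automorphism data in the \textsc{Atlas}; you merely flesh out the routine details (the logarithmic growth of $|\Out(T)|$ versus the polynomial growth of $|T|$ for Lie type, and the direct check of the small rank-one cases, with the extremal ratio indeed occurring at $\Alt(5)$).
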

\begin{proof}
The proof follows from a case-by-case analysis (detailed information on $|T|$ and $|\Out(T)|$ can be found in~\cite{ATLAS}).
\end{proof}

\begin{theorem}\label{thm:mainSD}Let $G$ be a finite primitive group on $\Omega$ of degree $n$ of SD type. If $G$ contains a permutation $g$ with $|g|\geq n/4$, then the socle of $G$ is $\Alt(5)^2$ and $|g|=n/4=15$.
\end{theorem}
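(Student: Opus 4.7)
The plan is to exploit the SD structure. In this type, $\soc(G) = T^\ell$ for some non-abelian simple $T$ and some $\ell \ge 2$, the degree is $n = |T|^{\ell-1}$, and $G$ sits inside $\Aut(T)\wr\Sym(\ell)$ in such a way that $G/T^\ell$ embeds in the diagonal $\Out(T)\times\Sym(\ell)$. Writing $g = (a_1,\ldots,a_\ell)\sigma$ with $\sigma$ of cycle type $(c_1,\ldots,c_k)$ and $\pi_i \in \Aut(T)$ the product of the $a_j$ around the $i$-th cycle, the standard wreath-product formula $|g| = \lcm_i(c_i|\pi_i|)$ combined with the prime-by-prime estimate $\max_i v_p(c_i|\pi_i|) \le \max_i v_p(c_i) + \max_i v_p(|\pi_i|)$ yields the key bound
\[
|g| \le \meo(\Sym(\ell))\cdot\exp(\Aut(T)) \le \meo(\Sym(\ell))\cdot|T|\cdot|\Out(T)|.
\]

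Combining with $|g| \ge n/4$ and Lemma~\ref{another} gives $|T|^{\ell-8/3} < \meo(\Sym(\ell))$. Since $|T| \ge 60$ and $\meo(\Sym(\ell))$ grows subexponentially by Theorem~\ref{Landau}, this fails for every $\ell \ge 4$; for $\ell = 3$ it forces $|T| < 216$, leaving only $T \in \{\Alt(5), \PSL_2(7)\}$. Both are then eliminated by the same displayed bound using the actual exponents $\exp(\Sym(5)) = 60$ and $\exp(\PGL_2(7)) = 168$, since in either case $\meo(\Sym(3))\cdot\exp(\Aut(T))$ is well below $|T|^2/4$. This reduces the problem to $\ell = 2$ and $n = |T|$.

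For $\ell = 2$ the element is either $(a_1,a_2)$ or $(a_1,a_2)(1\,2)$. In the latter case $|g| = 2|a_1 a_2| \le 2\meo(\Aut(T))$, which forces $\meo(\Aut(T)) \ge |T|/8$; by Theorem~\ref{main2}, the Landau estimate for alternating groups, the exact formulas of Theorem~\ref{cor:PGammaL}, and direct inspection of Table~\ref{exceptionsmain2}, no simple $T$ satisfies this. In the former case $|g| = \lcm(|a_1|,|a_2|) \le \meo(\Aut(T))^2$ with $a_1, a_2$ constrained to lie in a common coset of $T$ in $\Aut(T)$; the crude inequality $\meo(\Aut(T))^2 \ge |T|/4$ narrows the candidates to a short list (small alternating groups, small $\PSL_2(q)$, and groups in Table~\ref{exceptionsmain2}), and I eliminate each by computing the largest $\lcm$ of two same-coset element orders. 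The unique survivor is $T = \Alt(5)$, where a $3$-cycle and a $5$-cycle both in $\Alt(5)$ produce $|g| = \lcm(3,5) = 15 = n/4$.

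The main obstacle will be this final coset-by-coset sweep. The crude bound $\meo(\Aut(T))^2 \ge |T|/4$ does not by itself exclude several small cases: for instance, $T = \PSL_2(7)$ has $\meo(\Aut(T))^2 = 64 > 42 = |T|/4$ and thus looks eligible. Only by using the common-coset constraint and examining orders coset by coset (elements of $\PSL_2(7)$ have orders in $\{1,2,3,4,7\}$ while those of $\PGL_2(7)\setminus\PSL_2(7)$ have orders in $\{2,6,8\}$, giving maximum same-coset $\lcm$ equal to $\lcm(4,7) = 28 < 42$) can one close the case. Analogous element-order arithmetic within each outer coset is required for every remaining small candidate.
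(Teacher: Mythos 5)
Your proposal is correct and follows essentially the same route as the paper: the reduction of $\ell\ge 3$ via the bound $\meo(\Sym(\ell))\,|T|\,|\Out(T)|$ combined with Lemma~\ref{another} and Theorem~\ref{Landau}, the reduction of $\ell=2$ to $\meo(\Aut(T))^2$ (respectively $2\meo(\Aut(T))$ on the twisted coset), and a terminal case analysis on $T$ driven by Theorem~\ref{main2} and the tables. The differences are cosmetic --- you derive the key bound from the wreath-product cycle formula rather than the quotient inequality $\meo(G)\le\meo(Q)\meo(N)$, and you do the final eliminations by hand coset-by-coset where the paper invokes \texttt{magma}/ATLAS; your minor numerical slips (e.g.\ $|T|<216$ rather than $|T|<27$ for $\ell=3$, since $\meo(\Sym(3))=3$, and writing $|g|=2|a_1a_2|$ where only $\le$ is guaranteed) all err in the safe direction.
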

\begin{proof}
By the description of the O'Nan--Scott types in~\cite{Pr}, there exists a non-abelian 
simple group $T$ such that the socle $N$ of $G$ is  isomorphic to $T_{1}\times \cdots\times T_{\ell}$
with $T_{i} \cong T$ for each $i\in
\{1,\ldots,\ell\}$. The set $\Omega$
can be identified with $T_{1}\times\cdots \times T_{\ell-1}$ and, for the point
$\omega\in \Omega$ that is identified with $(1,\ldots,1)$, the stabilizer 
$N_\omega$ is the diagonal subgroup $\{(t,\ldots,t)\mid t \in T\}$ of $N$. 
That is to say, the action
of $N_\omega$ on $\Omega$ is permutation isomorphic to the action of
$T$ on $T^{\ell-1}$ by ``diagonal'' component-wise conjugation: the
image of the point
$(x_{1},\ldots,x_{\ell-1})$  under the
permutation corresponding to $t\in T$  is
\[ (x_{1}^{t},\ldots,x_{\ell-1}^{t}). \]
The group $G_\omega$ is isomorphic to a subgroup of $\Aut(T)\times \Sym(\ell)$ and $G$ is isomorphic to a subgroup of $T^\ell\cdot(\Out(T)\times \Sym(\ell))$. First suppose that $\ell\geq 3$. Using Lemma~\ref{another}, we have
\begin{eqnarray*}
\meo(G)&\leq &\meo(\Out(T)\times \Sym(l))\meo(T^\ell)\leq |\Out(T)|\meo(\Sym(\ell))\meo(T^\ell)\\
&\leq &|\Out(T)|\meo(\Sym(\ell))|T|
<\meo(\Sym(\ell))(|T|^{5/3}/4).
\end{eqnarray*}
Furthermore, with a direct computation, using Theorem~\ref{Landau} and the fact that $|T|\geq 60$, we can show that $|T|^{\ell-8/3}\geq \meo(\Sym(\ell))$. Thus
\[ \meo(G)<|T|^{\ell-8/3}\frac{|T|^{5/3}}{4}=\frac{|T|^{\ell-1}}{4}=\frac{|\Omega|}{4}. \]

Suppose that $\ell=2$. We claim that $\meo(G)\leq \meo(\Aut(T))^2$. Let $x$ be an element 
of $G$. Now, $x=(g_1,g_2)(1, 2)^i$ for some $i\in \{0,1\}$ where $g_1,g_2\in \Aut(T)$ and 
$g_1\equiv g_2\mod \mathrm{Inn}(T)$. If $i=0$, then $x=(g_1,g_2)$ and $|x|\leq |g_1||g_2|\leq \meo(\Aut(T))^2$. If $i=1$, then 
\[ x^2=(g_1,g_2)(1, 2)(g_1,g_2)(1, 2)=(g_1g_2,g_2g_1). \] 
Now $(g_1g_2)^{g_2^{-1}}=g_2g_1$ and so $|x^2|=|g_1g_2|\leq \meo(\Aut(T))$. Thus $|x|\leq 2\meo(\Aut(T))\leq \meo(\Aut(T))^2$ and our claim is proved. 

Now assume that $T=\Alt(m)$, for some $m\geq 5$. Using Theorem~\ref{Landau}, 
we see that $\meo(\Aut(T))^2<|T|/4$ for every $m\geq 7$. In particular, 
$\meo(G)<|\Omega|/4$, for $m\geq 7$. If $m=6$, then an easy computation shows 
that $\meo(\Alt(6)^2\cdot(\Out(\Alt(6))\times \Sym(2)))=40$ and $|\Omega|=|\Alt(6)|/4
=360/4=90>40$. On the other hand if $m=5$, then 
$|\Omega|/4=|\Alt(5)|/4=60/4=15$ is the order of $(g_1,g_2)\in G$ with $|g_1|=3, |g_2|=5$, 
and this case is in the statement of the theorem. 

Next, suppose that $T=\PSL_d(q)$ for some $m\geq 2$ and $q=p^f$. We may assume that $(m,q)\neq (2,4),(2,5),(2,9)$ and $(4,2)$. Using Table~\ref{newtable}, we find that $\meo(\Aut(T))^2<|T|/4$, for $(m,q)\neq (2,7),(2,8)$ and $(3,2)$. In particular, $\meo(G)<|\Omega|/4$  for $(m,q)\neq (2,7),(2,8)$ and $(3,2)$. Recall that $\PSL_2(7)\cong \PSL_3(2)$. If $(m,q)=(2,7)$, then an easy computation shows that $\meo(\PSL_2(7)^2\cdot(\Out(\PSL_2(7))\times \Sym(2)))=28$ and $|\Omega|=|\PSL_2(7)|/4=168/4=42>28$. Similarly, if $(m,q)=(2,8)$, then $\meo(\PSL_2(8)^2\cdot(\Out(\PSL_2(8))\times \Sym(2)))=63$ and $|\Omega|=|\PSL_2(8)|/4=504/4=126>63$. 

Finally suppose that $T$ is not isomorphic to $\Alt(m)$ or to  $\PSL_d(q)$.  By Theorem~\ref{main2}, it follows that either $\meo(\Aut(T))<m(T)/4$ or that $T$ is one of the groups in Table~\ref{exceptionsmain2}. In the first case, $\meo(\Aut(T))^2<m(T)^2/16\leq |T|/4=|\Omega|/4$ (where the last inequality follows from a direct inspection of Table~\ref{TableMDR}). It remains to suppose that $T$ is one of the groups in Table~\ref{exceptionsmain2}.  Now a case-by-case analysis using~\cite{ATLAS} shows that $\meo(\Aut(T))^2<|T|/4$ in each of the remaining cases.
\end{proof}

\subsection{Proof of Theorem~\ref{main}: the end}\label{reduction2}

We are finally ready to prove Theorem~\ref{main}.  However first we need some more notation. 

\begin{notation}\label{notaPA}{\rm Let  $G$ be a primitive group  of PA or CD type acting 
on $\Omega$. When $G$ is of PA type, the socle  
$\soc(G) = T_1 \times\cdots\times T_\ell$
is isomorphic to $T^\ell$, where $T$ is a non-abelian simple group, and $\ell\geq 2$. When $G$ is of CD type, 
\[
\soc(G)=(T_{1,1}\times \cdots \times T_{1,r})\times \cdots \times (T_{\ell,1}\times \cdots \times T_{\ell,r})
\] 
 is isomorphic to $T^{\ell r}$, where $T$ is a non-abelian simple group and $\ell,r\geq 2$.

In both cases, the action of $G$ on $\Omega$ is permutation isomorphic to the product action of 
$G$ on a set $\Delta^\ell$. By
identifying $\Omega$ with $\Delta^\ell$ we have $G \leq W = H \wr \Sym(\ell)$,  $H \leq \Sym(\Delta)$ 
is primitive on $\Delta$, $\soc(G)$ is the socle of $W$, and $W$ acts on $\Omega$ as in the product 
action. When $G$ is of PA type, $H$ is primitive of AS type and  $\soc(H)=T$. When $G$ is of CD type, 
$H$ is primitive of SD type and $\soc(H)=T^r$ (in particular $|\Delta|=|T|^{r-1}$ and 
$|\Omega|=|T|^{\ell(r-1)}$). 
  
}
\end{notation} 

\begin{proof}[Proof of Theorem~\ref{main}]
Recall that, according to~\cite{Pr}, the finite primitive permutation groups are partitioned into eight families: AS,~HA,~SD,~HS,~HC,~CD,~TW and~PA.  If $G$ is of AS or SD type, then the proof follows from Theorems~\ref{thm:mainAS} and~\ref{thm:mainSD}. If $G$ is of HA type, then the proof follows from~\cite{GMPSaffine}.

Suppose that $G$ is of HS type. Then $G$ is contained in a primitive group $M$ of SD type 
(one might choose $M$ to be $N_{\Sym(n)}(G)$, see~\cite{Pr}).    If $G$ contains an element of order at least $n/4$, then Theorem~\ref{thm:mainSD} implies that the socle of $G$ is $\Alt(5)^2$, which is one of the exceptions listed in Table~\ref{exceptions}.

Next, we recall that every primitive group of TW type is contained in a primitive group of HC type 
(see ~\cite[Section~$4.7$]{DM}), and also every primitive group of HC type is contained 
in a primitive group of CD type (see~\cite{Pr}). Therefore we will assume from now on 
that $G$ is of CD or PA type and we will use Notation~\ref{notaPA}. There are two cases 
to consider: (i)~$H$ contains a permutation $h$ with $|h|> |\Delta|/4$  and  
(ii)~$\meo(H)\leq |\Delta|/4$. Note that Case~(ii) is always satisfied if  $G$ 
is of CD type since, in this case, $H$ is of SD type and Theorem~\ref{thm:mainSD} 
applies. Moreover in Case~(ii) we have
\begin{eqnarray*}
\meo(G)&\leq& \meo(H^\ell)\meo(\Sym(\ell))< (\meo(H))^\ell\meo(\Sym(\ell))\\
&\leq&\frac{|\Delta|^\ell}{4^\ell}\meo(\Sym(\ell))= |\Omega|\frac{\meo(\Sym(\ell))}{4^\ell}\leq \frac{|\Omega|}{4}, 
\end{eqnarray*}
where the second inequality follows since $\ell\geq 2$ and the last inequality follows from 
Theorem~\ref{Landau}. Now suppose that Case~(i) holds; in particular,  $H$ is of AS type. 
By Theorem~\ref{thm:mainAS},   $T=\soc(H)$ is $\Alt(m)$ (in its natural action on $k$-sets) 
or $\PSL_d(q)$ (in its natural action on $\mathrm{PG}_{d-1}(q)$), or $T$ is one of the simple groups in Table~\ref{exceptions}.

It remains to show that there exists a positive integer $\ell_T$ depending only on $T$ with 
$\ell\leq \ell_T$. 
Arguing as above, we have
\begin{eqnarray*}
\meo(G)&\leq& \meo(\Aut(T)^\ell) \meo(\Sym(\ell)) \\ 
&\leq& |\Aut(T)| \meo(\Sym(\ell))\leq |\Aut(T)|  e^{2\sqrt{\ell\log\ell}}
\end{eqnarray*}
where the last inequality follows from Theorem~\ref{Landau}. Since $|\Omega|\geq m(T)^\ell\geq 5^\ell$, it is easy to see that 
$\meo(G)<|\Omega|/4$ for all sufficiently large $\ell$.
\end{proof}

\begin{remark}\label{rm-1}
\emph{In general, the smallest value of $\ell_T$ seems hard to obtain without a 
careful analysis of the element orders of $\Aut(T)$. Nevertheless, for some groups 
$T$ in Table~\ref{exceptions} the number $\ell_T$ can be obtained using some elementary 
arguments. Consider for example the group $T=\Alt(7)$. The element orders of $\Aut(T)
\cong \Sym(7)$ are $1,2,3,4,5,6,7,10$ and $12$. So the maximum element order of 
$\Sym(7)^2$ is $7\cdot 12=84$ and it is not hard to see that the maximum element 
order of $\Sym(7)^\ell$ is $\lcm(7,10,12)=420$ for each integer $\ell\geq 3$. 
In particular, $\meo(\Sym(7)\wr\Sym(\ell))\leq 420\meo(\Sym(\ell))$. 
Now observe that the minimal degree of a permutation representation of 
$\Alt(7)$ is $7$ and $420\meo(\Sym(\ell))<7^\ell/4$ for every $\ell\geq 5$. 
Thus $\ell_T\leq 4$. To obtain the precise value of  $\ell_T$,  one has to embark 
on a careful analysis of the possible element orders of $\Sym(7)\wr\Sym(\ell)$ for $\ell\in \{2,3,4\}$. In this case, it is easy to see that $\ell_T=4$.}

\emph{A similar argument can be used for the Higman--Sims group $T=HS$ for example. 
Remarkably, it turns out that $\ell_T=1$ here, which can be seen using~\cite{ATLAS}.}

\emph{In Table~\ref{anothertable} we give the values of $\ell_T$ for each of the 
simple groups in Table~\ref{exceptions} (these values were obtained with the help 
of a computer). The number $m$ in the table is the degree of the permutation 
representation of the socle factor $T$ of a primitive group $G$ of PA type  
admitting a permutation $g\in G$ with $|g|\geq m^{\ell}/4$.
}
\end{remark}

\begin{table}

\begin{tabular}{|c|c|}\hline
    $T$      &               $(m, \ell_T)$  where $n=m^\ell$ and $1 \le \ell \le \ell_T$ \\\hline             
 $\Alt(5)$   &            $(5,3)$, $(6,3)$, $(10,2)$\\          
 $\Alt(6)$   &           $(6,3)$, $(10,2)$, $(15,1)$\\          
 $\Alt(7)$   &      $(7,4)$, $(15,1)$, $(21,1)$, $(35,1)$\\     
 $\Alt(8)$   & $(8,4)$, $(15,2)$, $(28,1)$, $(35,1)$, $(56,1)$\\
 $\Alt(9)$   &                $(9,4)$, $(36,1)$\\               
  $M_{11}$   &                $(11,3)$, $(12,3)$\\              
  $M_{12}$   &                    $(12,3)$ \\                   
  $M_{22}$   &                    $(22,2)$ \\                   
  $M_{23}$   &                    $(23,3)$ \\                   
  $M_{24}$   &                    $(24,3)$ \\                   
    $HS$     &                    $(100,1)$ \\                  
$\PSL_2(7)$  &       $(7,2)$, $(8,3)$, $(21,1)$, $(28,1)$\\     
$\PSL_2(8)$  &           $(9,2)$, $(28,1)$, $(36,1)$\\          
$\PSL_2(11)$ &                $(11,2)$, $(12,3)$\\              
$\PSL_2(16)$ &                $(17,3)$, $(68,1)$\\              
$\PSL_2(19)$ &                $(20,3)$, $(57,1)$\\              
$\PSL_2(25)$ &                     $(26,2)$\\                   
$\PSL_2(49)$ &                     $(50,2)$\\                   
$\PSL_3(3)$  &               $(13,2)$, $(52,1)$ \\              
$\PSL_3(4)$  &                $(21,2)$, $(56,1)$\\              
$\PSL_4(3)$  &               $(40,2)$, $(130,1)$\\              
$\PSU_3(3)$  &                $(28,1)$, $(36,1)$\\              
$\PSU_3(5)$  &                    $(50,1)$ \\                   
$\PSU_4(3)$  &                    $(112,1)$ \\                  
$\PSp_6(2)$  &                $(28,1)$, $(36,1)$\\              
$\PSp_8(2)$  &                    $(120,1)$ \\                  
$\PSp_4(3)$  &   $(27,1)$, $(36,1)$, $(40,1)$, $(45,1)$\\\hline 
\end{tabular}
\caption{List of degrees $n=m^l$ for which there exists a primitive permutation group
$G$ of degree $n$ as in Theorem~\ref{main}(4)}
\label{anothertable}
\end{table}

\section{Proof of Theorem~\ref{playultimate}}\label{sec:playultimate}

\begin{proof}[Proof of Theorem~\ref{playultimate}]
The first part follows  using the values of $m(T)$ in Table~\ref{TableMDR} and the 
upper bounds on $\meo(\Aut(T))$ in Table~\ref{newtable} in the same way as in  the 
proof of Theorem~\ref{main2}. We only give full details  in the case $T=\PSU_d(q)$, 
with $q\geq 4$. If $d \ge 5$, then $\meo(\Aut(T)) \le q^{d-1}+q^{2}$. So
\[
m(T)^{3/4} = \left(\frac{(q^{d}-(-1)^{d})(q^{d-1} - (-1)^{d-1})}{q^2-1}\right)^{3/4} \ge  (q^{2d-3})^{3/4},
\] 
which is greater than $q^{d-1}+q^2$. If $d=3$, then $m(T)^{3/4} =  (q^{3}+1)^{3/4} >  q^{2}$  
and $\meo(\Aut(T))=q^2-1$ when $q\ne 4$ and so the bound in the statement of 
Theorem~\ref{playultimate} holds with possibly one exception. If $d=4$, then 
$m(T)^{3/4}= (q^{4}+q^{3}+q+1)^{3/4}$ and $\meo(\Aut(T))=q^3+1$ when $q \ne 2$ 
and so the bound in  the statement of Theorem~\ref{playultimate} holds with 
possibly one exception.
Similar calculations show that, apart from a finite number of exceptions,~(i) 
holds for all finite simple groups $T$ satisfying $T \ne \Alt(m)$ and $T\ne \PSL_d(q)$.

To prove the second part of Theorem~\ref{playultimate}, we let $\epsilon,A >0$, 
$g_\epsilon(x)=Ax^{3/4-\epsilon}$ and let $T=\PSU_4(q)$ with $q$ odd. Then 
$\meo(\Aut(T))=q^3+1$ and $m(T) = (q^3+1)(q+1)\le 2q^4$. Thus $g_\epsilon(m(T)) 
\le 2^{3/4} A q^{3-4\epsilon}$, which is strictly less than $q^3+1$ for all  
sufficiently large $q$.
\end{proof} 

\thebibliography{10}
\bibitem{Hassan}S.~H.~Alavi, \textit{Triple factorisations of general linear groups}, PhD  thesis at University of Western Australia, 2010.

\bibitem{AS}
M.~Aschbacher, G.~M. Seitz, Involutions in {C}hevalley groups over fields of even order, {\em Nagoya Math. J.} \textbf{63} (1976), 1--91.

\bibitem{BGMRS}J.~Bamberg, M.~Giudici, J.~Morris, G.~F.~Royle, P.~Spiga, Generalised quadrangles with a group of automorphisms acting primitively on points and lines, \textit{J. Combin. Theory Ser. A} \textbf{119} (2012), 1479--1499.

\bibitem{BT}
A.~D.~Barbour, S.~Tavar\'e, A rate for the Erd\"os--Tur\'an law, \textit{Combin. Probab. Comput.} {\bf3} (1994), 16--176. 

\bibitem{BLNPS}
R.~Beals, C.~R.~Leedham-Green, A.~C.~Niemeyer, C.~E.~Praeger, \'A.~Seress, Permutations with restricted cycle structure and an  algorithmic application, \emph{Combin. Probab. Comput.} {\bf11} (2002), 
447--464.

\bibitem{Bloom}D.~M.~Bloom, The subgroups of $\PSL(3,q)$ for odd $q$, \textit{Trans. Amer. Math. Soc. }\textbf{127} (1967), 150--178.

\bibitem{magma}W.~Bosma, J.~Cannon, C.~Playoust, The Magma algebra system. I. The user language, \textit{J.
Symbolic Comput.} \textbf{24} (1997), 235--265.

\bibitem{BC}A.~A.~Buturlakin, M.~A.~Grechkoseeva, The cyclic structure
  of maximal tori in finite classical groups, \textit{Algebra and
    Logic} \textbf{46} (2007), 73--89.

\bibitem{Carter}
R.~W.~Carter, \textit{Finite groups of {L}ie type}, Wiley Classics Library. John Wiley \& Sons Ltd., Chichester, 1993.
Conjugacy classes and complex characters, Reprint of the 1985
  original, A Wiley-Interscience Publication.

\bibitem{ATLAS}J.~H.~Conway, R.~T.~Curtis, S.~P.~Norton, R.~A.~Parker,
  R.~A.~Wilson, \textit{Atlas of finite groups}, Clarendon Press, Oxford, 1985.

\bibitem{coop} B.~N.~Cooperstein, Minimal degree for a permutation representation of a classical group, 
\textit{Israel J. Math.} {\bf 30} (1978), 213--235.

\bibitem{3d4even}
D.~I.~Deriziotis, G.~O.~Michler, Character table and blocks of finite simple triality groups
  {${^3}\DD_4(q)$}, \textit{Trans. Amer. Math. Soc.} \textbf{303} (1987), 39--70.

\bibitem{DM}J.~Dixon, B.~Mortimer, \textit{Permutation groups}, Springer-Verlag, New York, 1996.

\bibitem{ET1} P. Erd\"os, P. Tur\'an, On some problems of a statistical group-theory, I, 
\emph{Z. Wahrscheinlichkeitstheorie Verw. Gebeite} {\bf 4} (1965), 175--186.

\bibitem{ET2} P. Erd\"os, P. Tur\'an, On some problems of a statistical group-theory, III, 
\emph{Acta Math. Acad. Sci. Hungar.} {\bf 18} (1967), 309--320.

\bibitem{F2}
J.~Fulman, A probabilistic approach to conjugacy classes in the finite
  symplectic and orthogonal groups, \textit{J. Algebra} \textbf{234} (2000), 207--224.

\bibitem{FG}
J.~Fulman, R.~M.~Guralnick, Conjugacy class properties of the
  extension of $\mathrm{GL}_n(q)$ generated by the inverse transpose involution,
  \textit{J. Algebra} \textbf{275} (2004), 356--396.

\bibitem{GLS}
D.~Gorenstein, R.~Lyons, R.~Solomon, \emph{The classification
  of the finite simple groups. number 3. part I. chapter A},  \textbf{40}
  (1998), xvi+419.

\bibitem{gowvin}
R.~Gow, C.~R.~Vinroot, Extending real-valued characters of finite
  general linear and unitary groups on elements related to regular unipotents,
  \textit{J. Group Theory} \textbf{11} (2008), 299--331.

\bibitem{GMPSaffine}
S.~Guest, J.~Morris, C.~E.~Praeger, P.~Spiga, Finite primitve groups of affine type containing elements of large order, in preparation.

\bibitem{GMPScycles}S.~Guest, J.~Morris, C.~E.~Praeger, P.~Spiga, Finite primitive permutation groups containing a permutation with at most four cycles, in preparation.

\bibitem{H}B.~Huppert, Singer-Zylken in klassischen Gruppen, \textit{Math. Z.} \textbf{117} (1970), 141--150.

\bibitem{KS}W.~M.~Kantor, \'{A}.~Seress, Large element orders and the
  characteristic of Lie-type simple groups, \textit{J. Algebra}
  \textbf{322} (2009), 802--832.

\bibitem{KS2}W.~M.~Kantor, \`{A}.~Seress, Prime power graphs for
  groups of Lie type, \textit{J. Algebra} \textbf{247} (2002),
  370--434. 
  
\bibitem{KL}P.~Kleidman, M.~W.~Liebeck, \textit{The subgroup structure of the
  finite classical groups}, London Mathematical Society Lecture Notes
  Series \textbf{129}, Cambridge University Press, Cambridge. 

\bibitem{La1}E.~Landau, \"{U}ber die Maximalordnung der Permutationen gegebenen Grades, \textit{Arch. Math. Phys.} \textbf{5} (1903), 92--103.

\bibitem{La2} E.~Landau, \emph{Handbuch der Lehre vor der Verteilung der Primzahlen}, Teubner, Leipzig, 1909.

\bibitem{Law}R.~Lawther, Jordan block sizes of unipotent elements in exceptional algebraic
  groups, \textit{Comm. Algebra} \textbf{23} (1995), 4125--4156.

\bibitem{Liebeck11}M.~W.~Liebeck, On the orders of maximal subgroups
  of the finite classical groups, \textit{Proc. London Math. Soc. (3)}
  \textbf{50} (1985), 426--446.

\bibitem{LPS1}M.~W.~Liebeck, C.~E.~Praeger, J.~Saxl, On the O'Nan--Scott
  theorem for finite primitive permutation
  groups, \textit{J. Austral. Math. Soc. Ser. A} \textbf{44} (1988),
  389--396.   

\bibitem{LPS}M.~W.~Liebeck, C.~E.~Praeger, J.~Saxl, \textit{The maximal
  factorizations of the finite simple groups and their automorphism
  groups}, Memoirs of the American Mathematical Society, Volume
  \textbf{86}, Nr \textbf{432}, Providence, Rhode Island, USA, 1990. 
  
\bibitem{LM}M.~W.~Liebeck, J.~Saxl, Primitive permutation groups containing an element of large prime order,
\textit{J. London Math. Soc. } \textbf{31} (1985), 237--249. 

\bibitem{LSS}
M.~W.~Liebeck, J.~Saxl,  G.~M.~Seitz, Subgroups of maximal rank
  in finite exceptional groups of lie type, \textit{Proc. London Math. Soc.}
  \textbf{65} (1992), 297--325.

\bibitem{Mass}J.~P.~Massias, J.~L.~Nicolas, G.~Robin, Effective Bounds for the Maximal Order of an Element in the Symmetric Group, \textit{Mathematics of Computation} \textbf{53} (1989), 665--678.

\bibitem{Vav4}V.~D.~Mazurov, A.~V.~Vasil$'$ev,  Minimal permutation representations of finite simple orthogonal groups. (Russian) Algebra i Logika 33 (1994), no. 6, 603--627, 716; translation in Algebra and Logic 33 (1994), no. 6, 337–350 

\bibitem{mueller}P.~M\"{u}ller, Permutation groups with a cyclic two-orbits subgroup and monodromy groups of
Laurent polynomials, \textit{Ann. Scuola Norm. Sup. Pisa} \textbf{12} (2013), to appear.

\bibitem{PS}Cheryl~E.~Praeger, J.~Saxl, On the orders of primitive permutation groups, \textit{Bull. London Math. Soc. }\textbf{12} (1980), 303--307.

\bibitem{Pr}Cheryl~E.~Praeger, Finite quasiprimitive graphs, in Surveys in
  combinatorics, \textit{London Mathematical Society Lecture Note Series}, vol. 24 (1997), 65--85.

\bibitem{Rob}H.~Robbins, A remark on Stirling's formula, \textit{Amer. Math. Monthly} \textbf{62} (1955), 26--29.

\bibitem{Shinoda}
K.~Shinoda, The conjugacy classes of Chevalley groups of type
  ${F}_4$ over finite fields of characteristic $2$, \textit{J. Fac. Sci. Univ. Tokyo Sect. I A Math.} \textbf{21} (1974),
  133--159.

\bibitem{Suzuki}M.~Suzuki, A new type of simple groups of finite
  order, \textit{Proc. Nat. Acad. Sci. U.S.A.} \textbf{46} (1960), 868--870.  

\bibitem{SuzukiBook}M.~Suzuki, \textit{Group Theory I}, Grundlehren
  der mathematischen Wissenschaften \textbf{247}, Springer--Verlag, New
  York, 1981. 
 
\bibitem{Vav3}A.~V.~Vasil$'$ev, Minimal permutation representations of
  finite simple exceptional groups of types $G_2$ and $\FF_4$,
  \textit{Algebra and Logic} \textbf{35} (1996), 371--383.

\bibitem{Vav2}A.~V.~Vasil$'$ev, Minimal permutation representations of
  finite simple exceptional groups of types $\EE_6$, $\EE_7$ and $\EE_8$,
  \textit{Algebra and Logic} \textbf{36} (1997), 302--310.

\bibitem{Vav1}A.~V.~Vasil$'$ev, Minimal permutation representations of
  finite simple exceptional twisted groups, \textit{Algebra and Logic}
  \textbf{37} (1998), 9--20.
  
\bibitem{Wilson}
R.~A.~Wilson, \emph{The finite simple groups}, Graduate Texts in Mathematics, Springer-Verlag (2009),
  xvi+298.
\end{document}